\setlist[enumerate]{leftmargin=.5in}
\setlist[itemize]{leftmargin=.5in}
\crefname{hypothesis}{Hypothesis}{Hypotheses}
\newcommand{\revdone}{}
\definecolor{mygreen}{RGB}{28,172,0} 
\definecolor{mylilas}{RGB}{170,55,241}
\title{Density Estimation in Uncertainty Propagation Problems Using a Surrogate Model\thanks{
\funding{The research of A.\ Ditkowski was supported by the United States - Israel Binational Science Foundation under grant number 2016197. The research of G.\ Fibich and A.\ Sagiv was partially supported by the Israel Science
Foundation (ISF) under grant number 177/13.}}}
\author{Adi Ditkowski\thanks{School of Mathematical Sciences, Tel Aviv University, Tel Aviv 6997801, Israel}
\and Gadi Fibich\footnotemark[2]
\and Amir Sagiv\footnotemark[2] \thanks{\email{asagiv88@gmail.com}}}
\begin{document}

\maketitle

\begin{abstract}
The effect of uncertainties and noise on a quantity of interest (model output) is often better described by its probability density function (PDF) than by its moments. Although density estimation is a common task, the adequacy of approximation methods {\revdone (surrogate models)} for density estimation has not been analyzed before in the uncertainty-quantification (UQ) literature. In this paper, we first show that standard surrogate models (such as generalized polynomial chaos), which are highly accurate for moment estimation, might completely fail to approximate the PDF, even for one-dimensional noise. {\revdone This is because density estimation requires that the surrogate model accurately approximates the gradient of the quantity of interest, and not just the quantity of interest itself. Hence, we develop a novel spline-based algorithm for density-estimation whose convergence rate in $L^q$ is polynomial} in the sampling resolution. This convergence rate is better than that of standard statistical density-estimation methods (such as histograms and kernel density estimators) at dimensions $1 \leq d\leq \frac{5}{2}m$, where $m$ is the spline order. {\revdone Furthermore, we obtain the convergence rate for density estimation with any surrogate model that approximates the quantity of interest and its gradient in $L^{\infty}$.} Finally, we demonstrate our algorithm for problems in nonlinear optics and fluid dynamics. 
\end{abstract}

\begin{keywords}
Uncertainty Quantification, Density Estimation, Probability Density Function, Nonlinear Dynamics, Spline, Surrogate Model
\end{keywords}

\begin{AMS}
  65D07, 65Z05, 62G07, 78A60  
\end{AMS}

\section{Introduction}
Uncertainties and noise are prevalent in mathematical models in all branches of science. In such cases, the solution of the (otherwise deterministic) model becomes random, and so one is interested in computing its statistics. This problem, sometimes known as {\em forward uncertainty propagation} (UQ), arises in various areas such as biochemistry \cite{ le2010asynchronous, najm2009uncertainty}, fluid dynamics~\cite{chen2005uncertainty, zabaras2007sparse, le2010spectral,    najm2009uncertainty}, structural engineering \cite{sudret2000stochastic}, hydrology \cite{colombo2018basins}, and nonlinear optics \cite{best2017paper}.

In many applications, one is interested in computing the {\em probability density function} (PDF) of a certain "quantity of interest" (output) of the model \cite{ablowitz2015interacting, chen2005uncertainty, colombo2018basins,  zabaras2007sparse, le2010asynchronous,  best2017paper, ullmann2014pod}. {\revdone Often, density estimation is performed using standard uncertainty propagation methods and surrogate models~\cite{ghanem2017handbook, sudret2000stochastic}, such as Stochastic Finite Element and generalized Polynomial Chaos~(gPC)~
\cite{ghanem2003stochastic, o2013polynomial, stefanou2009stochastic, xiu2010numerical}, hp-gPC~\cite{wan2005adaptive}, and Wiener-Haar expansion \cite{lemaitre2004wavelet}, since these methods can approximate moments with spectral accuracy \cite{xiu2005colocation, xiu2002galerkin}. In this paper we show, however, that methods which are robust and highly accurate for moment-approximation are not necessarily so for density estimation. To the best of our knowledge, this observation has not been made before in the UQ literature. }

{\revdone Why is it then that robust moment approximation does not imply robust density estimation? This is because the quantity of interest $f(\alpha)$ and its PDF $p_f(\alpha)$ are explicitly related by (Lemma~\ref{lem:pdf}) $$p_f (y) = \sum\limits_{\alpha \in f^{-1}(y)} \frac{c(\alpha)}{|f'(\alpha)|} \, ,$$ where $\alpha$ is the one-dimensional random parameter and $c(\alpha)d\alpha$ is its distribution. This formula and its multidimensional counterpart (Lemma \ref{lem:ddim_pdf}) show that even if $f$ is well approximated by a function $g$ in $L^q$, the corresponding density $p_g$ might not be a good approximation of $p_f$. Indeed, for $p_g$ to approximate $p_f$, then $g'$ needs to be close to $f'$, and $g'(\alpha)$ should also vanish if and only if $f'(\alpha)$ does. These conditions might not be satisfied by some of the above-mentioned standard UQ methods. In contrast, spline interpolation approximates both the function and its gradient~\cite{beatson1986spline, hall1976bounds, rice1978spline, schultz1969spline}, and does not tend to produce spurious extremal points. Therefore, we construct a novel algorithm for density estimation in uncertainty-propagation problems using splines as our surrogate model.} With cubic splines, our density-estimation algorithm has a {\em guaranteed} convergence rate of at least~$h^3$, where $h$ is the maximal sampling spacing (resolution). More generally, with splines of order $m$, the convergence rate is at least $h^m$. These rates are superior to those of the standard kernel density estimators~\cite{tsybakov2009estimate, wasserman2013all}, for noise dimension $1\leq d \leq \frac{5}{2}m$. {\revdone Our choice of splines is motivated by the availability and efficiency of one- and multi-dimensional spline toolboxes. Nevertheless, other surrogate models can be used in this algorithm, and indeed this paper lays the theoretical framework for deriving the convergence rate of such methods (Corollaries \ref{cor:1d_gen} and \ref{cor:gen_multid}). We show, essentially, that density estimation convergence can be performed with any surrogate model for which the $L^{\infty}$ error of both the function and its gradient converge to zero as the spacing resolution $h$ vanishes.}
Because we only rely on solving the underlying deterministic model (i.e., our method is non-intrusive), and because interpolation by spline is a standard numerical procedure, our proposed method is very easy to implement.

{\revdone While the focus of this paper is on density-estimation, we also consider the problem of moment-estimation using a small sample-size.} Traditionally, the error bounds of moment-estimation for spectral methods (e.g., gPC) are obtained asymptotically as $N$, the number of samples, goes to infinity. In some applications, however, each solution of the deterministic model is computationally expensive and so the number of samples is limited to e.g., $N<100$. Hence, spectrally-convergent methods might fail to attain the desired accuracy due to insufficient sampling resolution, {\em even for one-dimensional noise}. In contrast, the spline-based method approximates moments accurately even when the sample size is small. In addition, high-derivatives and discontinuities have little effect on our method's accuracy, due to the fact that spline interpolation is predominantly local (see Sec.\ \ref{sec:spline}). Another advantage over gPC is that splines are not limited to a specific choice of sampling points.

The paper is organized as follows. Sec.\ \ref{sec:set} introduces the general settings and notations, and presents several density-estimation applications from the forward uncertainty propagation literature. Sec.\ \ref{sec:methods} reviews standard statistical density-estimation methods (histogram, kernel density estimators) and the gPC method for moment- and density-estimation. In Sec.\ \ref{sec:spline} we present our spline-based algorithm for moment- and density-estimation in the one-dimensional case. We then prove that the density-estimation error scales as $N^{-m}$, where $N$ is the number of samples and~$m$~is the order of the splines (Theorem \ref{thrm:cub_pdf}). Sec.\ \ref{sec:multid_theory} generalizes our algorithm to $d$-dimensional noises using tensor-product splines of order $m$. This section also contains our key theoretical result (Theorem~\ref{thm:multid}), that the density-estimation error in the $d$-dimensional case scales as $N^{-\frac{m}{d}}$ .

In Sec.\ \ref{sec:cnls} we compare numerically the moment-estimation and density-estimation accuracy of our spline-based method with that of gPC and KDE in one dimension. In addition, in Sec.\ \ref{sec:nonsmooth} we show that both gPC and our spline-based method can approximate moments and the PDF of certain non-smooth quantities of interest. We conclude this section with two- and three-dimensional numerical examples (Sec.\ \ref{sec:multid_toy}). In all cases, the density-estimation errors are consistent with our error estimates (Theorems \ref{thrm:cub_pdf} and \ref{thm:multid}). We use our method to compute the PDF of the rotation angle of the polarization ellipse in nonlinear optics~(Sec.~\ref{sec:cnls}),~and the PDF of the shock location in the Burgers equation (Sec.\ \ref{sec:burgers}). In all these cases, we confirm that the spline-based density estimation converges at least at a cubic rate, and observe that the spline-based moments are more accurate than the gPC ones for small sample sizes. Sec.~\ref{sec:final}~concludes with open questions and future research directions.

\section{Settings and computational goals}\label{sec:set}

We consider initial value problems of the form
\begin{equation}\label{eq:general_settings}
u_t (t,{\bf x }; \pmb{\alpha } ) = Q(u,{\bf x};\pmb{\alpha}) u  \, ,\qquad u(t=0,{\bf x};\pmb{\alpha}) = u_0 ({\bf x};\pmb{\alpha}) \, ,
\end{equation} 
where ${\bf x}\in \mathbb{R}^d$, $Q$  is a possibly nonlinear differential operator, and $\pmb{\alpha} \in \Omega \subset \mathbb{R}^m$ is a random variable which is distributed according to a continuous weight function $c(\pmb{\alpha})$, {\revdone the PDF of the input parameters,} such that $\int_{\Omega} c(\pmb{\alpha} ) \, d\pmb{\alpha} = 1$. The randomness of $u(t,{\bf x};\pmb{\alpha})$ is due to the dependence of $Q$ and/or $u_0$ on $\pmb{\alpha}$.

For a given a {\em quantity of interest} $f(\pmb{\alpha}):=f(u(t,{\bf x}));\pmb{\alpha}))$, we may wish to perform: 
\begin{enumerate}
\item \textbf{Moment estimation.} Compute the mean, variance, or standard deviation of $f(\pmb{\alpha})$ :
\begin{equation}\label{eq:moments_def}
\mathbb{E} _{\pmb{\alpha}}\lbrack f \rbrack := \int\limits_{\Omega } f(\pmb{\alpha}) \, c(\pmb{\alpha})d\pmb{\alpha} \, , \quad \mbox{Var} \left[ f \right] := \left|\mathbb{E} _{\pmb{\alpha}} \left[ f \right]\right|^2 - \mathbb{E}_{\pmb{\alpha}} \left[ |f| ^2 \right] \, ,\quad \sigma (f) := \sqrt{\mbox{Var} \left[ f \right]} ~ .
\end{equation}
\item \textbf{Density estimation.} Compute the probability distribution function (PDF) of $f(\pmb{\alpha})$.
\begin{subequations}\label{eq:pdf_de}
\begin{equation}
p(y) := \frac{dP(y)}{dy} \, , \qquad y\in \mathbb{R} \, ,
\end{equation}
where $P$ is the cumulative distribution function (CDF),
\begin{equation}
P(y):= \mbox{Prob}\{  f(\pmb{\alpha}) <y \}  \, .
\end{equation}
\end{subequations} 
\end{enumerate}

\subsection{Applications}

Two examples of density-estimation in UQ which will be discussed in this paper are the effect of shot-to-shot variation in nonlinear optics (Sec.\ \ref{sec:cnls}) and hydrodynamical shock formation (Sec.\ \ref{sec:burgers}). We briefly present two other examples of density estimation in the UQ literature, for which our method can also be applied:
\begin{enumerate}
\item {\bf Out-of-equilibrium chemical reactions.} Belosouv-Zhabotinsky type systems model out-of-equilibrium chemical reactions. One concrete system is the Oregonator~\cite{field1974BZ}
\begin{equation*}
\begin{aligned}
\frac{dX}{dt}&= k_1 Y-k_2XY+k_3 X -k_4 X^2 \, ,\\ \frac{dY}{dt} &= -k_1 Y -k_2 XY + k_5 Z \, , \\ \frac{dZ}{dt} &= k_3 X -k_5 Z \, ,
\end{aligned}
\end{equation*}
where $X$, $Y$, and $Z$ are the concentrations of three different chemical species, and $\left\{ k_i\right\}_{i=1}^{5}$ are the rate-parameters, often estimated empirically \cite{najm2009uncertainty}. For large values of $t$, this system exhibits sustained, temporal oscillations with a frequency $F=F(k_1 , \ldots ,k_5)$. To deal with an uncertainty in the parameters $k_4$ and $k_5$, the authors of \cite{le2010asynchronous} computed the moments of $X,Y,Z$, and the PDF of the oscillations frequency $F$. This is an example of \eqref{eq:general_settings}--\eqref{eq:pdf_de} with $\pmb{\alpha} = (k_4, k_5)$ and $f= X$, $Y$, $Z$ and $F$.

\item {\bf Heat convection.} Consider the flow of a fluid in a two-dimensional box ${\bf x} = (x,y) \in [x_1,x_2]\times[y_1,y_2]$, which is modeled by  the Navier-Stokes like equations
$$ \nabla \cdot {\bf u} = 0 \, ,\qquad \frac{\partial \theta}{\partial t} + {\bf u} \cdot \nabla \theta = \nabla ^2 \theta \, , $$
$$\frac{\partial {\bf u}}{\partial t} + {\bf u}\cdot \nabla {\bf u} = -\nabla p + {\rm Pr} \nabla ^2 {\bf u} + F({\bf u},\theta) \, ,$$
where ${\bf u} (t,{\bf x};\pmb{\alpha})$ is the fluid velocity, $p(t,{\bf x};\pmb{\alpha})$ is the pressure, $\theta(t,{\bf x}; \pmb{\alpha})$ is the temperature, ${\rm Pr}$ is the Prandtl number, and $F$ is the buoyant force \cite{zabaras2007sparse}. The temperature is a known constant $\theta _0$ on one side of the box, but is random on the other side,~i.e., $$\theta(t,x_1,y) \equiv \theta _0 \, , \qquad \theta(t,x_2,y) = \theta_1 (y;\pmb{\alpha}) \, .$$
The PDF of the pressure and of the velocity were computed in \cite{ullmann2014pod} when $\theta_1(y;\alpha) =\theta_1 (\alpha)$ and~$\alpha$ is uniformly distributed in~$[\alpha _{\min}, \alpha_{\max} ]$, and in \cite{zabaras2007sparse} when $\theta_1(y;\pmb{\alpha})$~is a Gaussian random process. 

\end{enumerate}

\section{Review of existing methods}\label{sec:methods}
We briefly present the standard methods in the literature for~\eqref{eq:general_settings}--\eqref{eq:pdf_de}.
\subsection{Monte-Carlo method, the histogram method, and Kernel Density Estimators}\label{sec:kde}
Given $N$ independently and identically distributed (iid) samples $\left\{ \pmb{\alpha} _j \right\}_{j=1}^N$, the simplest moment estimator is the Monte-Carlo approximation $E_{\pmb{\alpha}} \left[ f \right] \approx~\frac{1}{N} \sum_{n=1}^N f(\pmb{\alpha }_n)$. The Monte Carlo method is intuitive and easy to implement. The main drawback of this method is its slow convergence rate of $O(N^{-1/2})$. In cases where each computation of $f(\pmb{\alpha}_j)$~is expensive (e.g., when it requires to solve numerically \eqref{eq:general_settings} with $\pmb{\alpha}=\pmb{\alpha}_j$), this slow convergence rate can make the Monte-Carlo method impractical.

{\em Density estimation} using $N$ iid samples of $f(\pmb{\alpha})$, denoted by $\left\{ f_j \right\}_{j=1}^N$, is a fundamental problem in non-parametric statistics. A widely-used method for density estimation is the histogram method, in which one partitions the range of~$f(\pmb{\alpha} )$ into $L$ disjoint bins $\left\{ B_{\ell} \right\}_{\ell=1}^{L}$, and approximates the PDF $p$ with the {\em histogram estimator}
\begin{equation}\label{eq:hist}
p_{\rm hist}(y) :\,= \frac{1}{N}\sum\limits_{\ell =1}^L  \left( \#~\text{of samples for which}~f_j \in B_{\ell} \right)\cdot \mathbbm{1}_{ B_{\ell} } (y)  \, ,
\end{equation}
where $\mathbbm{1}_{B_{\ell} }$ is the characteristic function of bin $B_{\ell}$ \cite{wasserman2013all}. An alternative approach (which, unlike  the histogram method, can provide a smooth PDF) is the Kernel Density Estimator~(KDE)
\begin{equation}\label{eq:KDE}
p_{\rm kde} (y) :\,= \frac{1}{Nh} \sum\limits_{j=1}^N K\left( \frac{y - f_j}{h} \right) \, ,
\end{equation}
where $h>0$ is the "window size" and $K$ is the kernel function (e.g., $K(t) = (2\pi)^{-1/2}e^{-t^2 /2}$), see~\cite{tsybakov2009estimate, wasserman2013all}. The $L^1$ error of KDE method asymptotically scales as $N^{-2/5}$ \cite{devroye1985l1}.\footnote{The mean $L^2$ error (the squared root of the "MISE"), also asymptotically scales as $N^{-\frac{2}{5}}$ \cite{tsybakov2009estimate, wasserman2013all}.} As with the Monte-Carlo method, this rate is too slow when each evaluation of~$f_j$~is computationally expensive.

\subsection{Generalized Polynomial Chaos}

The Monte-Carlo method, the histogram method, and KDE are all statistical methods, in the sense that they only rely on the sampled values $\left\{f_j \right\}_{j=1}^N$. Much more information can be extracted from $\left\{f_j \right\}_{j=1}^N$ if the two following conditions hold: 
\begin{enumerate}
\item The "original" $\left\{\pmb{\alpha}_j \right\} _{j=1}^N $ for which $f(\pmb{\alpha} _j)=f_j$ are known.
\item $f(\pmb{\alpha})$ is smooth.\footnote{In Sec.\ \ref{sec:nonsmooth} we show how our method can be extended to non-smooth functions.}
\end{enumerate}
These two conditions often hold in the general settings of Sec.\ \ref{sec:set}. In such cases, a powerful numerical approach, known as  generalized Polynomial Chaos (gPC), can be applied \cite{ghanem2017handbook, ghanem2003stochastic, o2013polynomial, xiu2010numerical}. For clarity, we review the gPC method for a one-dimensional random variable $\alpha$,  i.e.,~$\Omega \subseteq~\mathbb{R}$.

We define the set of orthogonal polynomials $\{ p_n (x) \}_{n=0} ^{\infty }$ with respect to $c(\alpha)$ by the conditions~\cite{szego1939orthogonal}
\begin{equation}\label{eq:orthogonal_polynomials_def}
{\rm Deg} ( p_n ) = n \, , \qquad  \int\limits_{\Omega} p_n ^* (\alpha) p_m  (\alpha) \, c (\alpha) d\alpha =  \delta _{n,m} \, ,
\end{equation}
{\revdone where $p_n ^*$ denotes the complex conjugation of $p_m$.} This family of orthogonal polynomials constitutes an orthonormal basis of the space of square integrable functions, i.e., for all $f\in L^2(\Omega, c)$,
\begin{equation}\label{eq:pc_expansion}
f(\alpha ) = \sum\limits_{n=0}^{\infty } \hat{f} (n) p_n (\alpha ) \, , \qquad \hat{f}(n) := \int\limits_{\Omega} f(\alpha) p_n (\alpha)  c(\alpha) \, d\alpha, \quad  n= 0,1,\ldots \, . 
\end{equation}
This expansion {\em converges spectrally} for the classical families of orthogonal polynomials, e.g., the normalized Hermite and Legendre polynomials.\footnote{i.e., if $f$ is in $C^{r}$, then $\{\hat{f} (n)\} \leq cn^{-r}$, and if $f$ is analytic, then $\lvert\hat{f} (n)\rvert \leq ce^{-\gamma n}$, for some $c,\gamma >0$.} Specifically, if $f$ is analytic, the truncated expansion~\eqref{eq:pc_expansion} has the exponential accuracy
\begin{equation}\label{eq:spec_l2}
\left\| f(\alpha ) - \sum\limits_{n=0}^{N-1 } \hat{f} (n) p_n (\alpha ) \right\|_2  \sim C e^{-\gamma N} \, ,\qquad N\gg 1 \, ,
\end{equation}
for some constants $C,\gamma >0$ \cite{trefethen2013book, wang2012convergence, xiu2010numerical}.

The expansion coefficients~$\{\hat{f} (n)\}$, see \eqref{eq:pc_expansion}, can be approximated using the Gauss quadrature formula
$\mathbb{E}_{\alpha} \lbrack  g \rbrack \approx \sum_{j=1}^N g (\alpha _j  ) w_j $,
where~$\left\{ \alpha _j\right\}_{j=1}^{N}$ are the distinct and real roots of $p_N(\alpha)$, $w_j  :=~\int\limits_{\Omega } l_j  (\alpha ) \, d\mu (\alpha )$ are the weights, and $l_j  (\alpha)$ are the Lagrange interpolation polynomials with respect to $\left\{ \alpha _j\right\}_{j=1}^{N}$ \cite{davis1967integration}, yielding
\begin{equation}\label{eq:coefs_with_quad}
\hat{f} (n)  \approx \hat{f}_N (n) :\,= \sum\limits_{j=1}^N f\left( \alpha _j  \right) p_n  \left( \alpha _j  \right) w_j ,\qquad n=0,1,\ldots, N-1 \, .
\end{equation}

The gPC collocation approximation is defined by
\begin{equation}\label{eq:col_gpc}
f_N ^{\rm gpc} (\alpha) :\,= \sum\limits_{n=0}^{N-1} \hat{f} _N (n) p_n (\alpha) \, ,
\end{equation} where $\{\hat{f}_N (n)\}_{n=0}^{N-1}$ are given by \eqref{eq:coefs_with_quad}, see \cite{xiu2005colocation}.

The spectral accuracy of the gPC approximation {\revdone in $L^2$} implies a similar accuracy for the approximation of moments:
\begin{corollary}\label{corr:moment_gpc}
Let $f$ be analytic, and let $f_N ^{\rm gpc}$ be its {\rm gPC} collocation approximation of order~$N$, see \eqref{eq:col_gpc}. Then the moments \eqref{eq:moments_def} of $f$ can be approximated by the respective moments of $f_N ^{\rm gpc}$ with exponential accuracy as $N\to \infty$.
\end{corollary}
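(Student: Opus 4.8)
The plan is to reduce the moment-approximation error to the $L^2$ error of the gPC collocation approximation, for which the exponential bound \eqref{eq:spec_l2} (or rather its collocation analogue, the aliasing estimate) already applies. First I would write the error in the mean as
\[
\left| \mathbb{E}_\alpha[f] - \mathbb{E}_\alpha\!\left[ f_N^{\rm gpc} \right] \right|
= \left| \int_\Omega \bigl( f(\alpha) - f_N^{\rm gpc}(\alpha) \bigr) c(\alpha)\, d\alpha \right|
\le \left\| f - f_N^{\rm gpc} \right\|_{L^2(\Omega,c)} ,
\]
where the last step is Cauchy--Schwarz together with $\int_\Omega c(\alpha)\,d\alpha = 1$. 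Since $f$ is analytic, the truncation error $\| f - \sum_{n=0}^{N-1}\hat f(n) p_n\|_2$ decays like $Ce^{-\gamma N}$ by \eqref{eq:spec_l2}, and the aliasing error $\| \sum_{n=0}^{N-1}(\hat f(n) - \hat f_N(n)) p_n\|_2$ coming from replacing the exact coefficients \eqref{eq:pc_expansion} by the quadrature coefficients \eqref{eq:coefs_with_quad} is controlled by the tail $\sum_{n\ge N}|\hat f(n)|$, hence is also exponentially small; a triangle inequality then gives $\| f - f_N^{\rm gpc}\|_2 \le C' e^{-\gamma' N}$. This settles the mean.

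For the variance and standard deviation I would argue similarly but first control the second moment $\mathbb{E}_\alpha[|f|^2]$. Write
\[
\left| \mathbb{E}_\alpha\!\left[ |f|^2 \right] - \mathbb{E}_\alpha\!\left[ \bigl| f_N^{\rm gpc} \bigr|^2 \right] \right|
\le \int_\Omega \left| |f|^2 - \bigl| f_N^{\rm gpc} \bigr|^2 \right| c(\alpha)\, d\alpha
\le \left\| f - f_N^{\rm gpc} \right\|_2 \, \left\| f + f_N^{\rm gpc} \right\|_2 ,
\]
again by Cauchy--Schwarz after factoring $|f|^2 - |g|^2 = (f-g)\overline{(f+g)}$ (real-valued case) or the analogous identity. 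The second factor is bounded uniformly in $N$ (it is at most $2\|f\|_2 + \| f - f_N^{\rm gpc}\|_2 \le 2\|f\|_2 + C'$), so the product is again $O(e^{-\gamma' N})$. Combining with the bound on the mean and the definition \eqref{eq:moments_def}, $\mathrm{Var}[f] = |\mathbb{E}_\alpha[f]|^2 - \mathbb{E}_\alpha[|f|^2]$, yields exponential convergence of the variance, and since $\sigma(f) = \sqrt{\mathrm{Var}[f]}$ with $\mathrm{Var}[f]$ bounded away from zero (or, if it is zero, handled trivially), the square root preserves the exponential rate up to a constant.

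The only genuinely non-routine point is the aliasing (quadrature) error: the coefficients used in $f_N^{\rm gpc}$ are the Gauss-quadrature approximations $\hat f_N(n)$, not the exact $\hat f(n)$, so \eqref{eq:spec_l2} does not apply verbatim to $f_N^{\rm gpc}$. I expect this to be the main obstacle, and I would handle it by the standard gPC collocation/aliasing argument: the $N$-point Gauss quadrature is exact for polynomials of degree $\le 2N-1$, so for $n \le N-1$ the error $\hat f(n) - \hat f_N(n)$ equals the quadrature error of $f p_n$, which by orthogonality telescopes into a sum of higher-order exact coefficients $\hat f(k)$ with $k \ge N$, each exponentially small because $f$ is analytic; see \cite{xiu2005colocation, xiu2010numerical}. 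Everything else is Cauchy--Schwarz and the triangle inequality.
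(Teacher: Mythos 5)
Your proposal is correct and follows essentially the same route as the paper: reduce the moment errors to $\|f-f_N^{\rm gpc}\|_2$ via Cauchy--Schwarz and a difference-of-squares factorization, then invoke the exponential $L^2$ convergence of the collocation approximant (which the paper, like you, ultimately defers to \cite{xiu2010numerical, gotlieb2007spectral} for the aliasing part). The only cosmetic difference is the standard deviation: the paper uses the identity $|\sigma(f)-\sigma(g)| = |\mathrm{Var}(f)-\mathrm{Var}(g)|/(\sigma(f)+\sigma(g))$ on centered variables to get the clean bound $\|f-g\|_2$ directly, avoiding your caveat about the variance being bounded away from zero.
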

\begin{proof}
See Appendix \ref{app:l2bounds_pf}.
\end{proof}
\noindent
For a smooth quantity of interest $f$, this spectral convergence rate is superior to the Monte-Carlo's $1/\sqrt{N}$ convergence rate, which explains the popularity of the gPC collocation method.  

In \cite{best2017paper} we used the gPC approximation for moments and density estimation:

\begin{algorithm}
\caption{gPC-based estimation \cite{best2017paper}}\label{alg:gpc_int}
Let $\left\{\alpha _j, w_j \right\}_{j=1}^N$ be the points and weights of the Gaussian quadrature rule of order $N$ that correspond to the weight function~$c(\alpha)$, and let $\left\{ p_n (\alpha) \right\}_{n=0}^{\infty}$ be the respective orthogonal polynomials.
\begin{algorithmic}[1]
\STATE{For $j=1, \dots ,N$, solve \eqref{eq:general_settings} with $\alpha = \alpha_j$ to obtain  $u \left(t, {\bf x}  ; \alpha _j  \right)$.}
\STATE{Approximate $$u (t, x;\alpha ) \approx u _N^{\rm gpc} (t, {\bf x};\alpha ) \, ,$$where $$u_N^{\rm gpc} (t,{\bf x};\alpha) :=  \sum\limits_{n=0}^{N-1} \hat{u}_N (t, {\bf x};n) p_n(\alpha )$$ and
$$\hat{u}_N (u,{\bf x};n ) = \sum\limits _{j=1} ^N p_n (\alpha _j  ) u \left(t, {\bf x}; \alpha _j \right) w_j \, , \qquad n=0,\ldots , N-1 \, .$$}

\STATE{Approximate $f(\tilde{\alpha} _m) \approx f(u _N ^{\rm gpc}(\cdot, \tilde{\alpha } _m ) )$ on a sample of $M\gg N$ points $\left\{ \tilde{\alpha}_m \right\} _{m=1}^M$ which are i.i.d. according to $c(\alpha)$.}
\IF{{\bf goal is moment estimation:}}
\STATE{Use the trapezoidal integration rule with $\left\{ f(\tilde{\alpha}_{m}) \right\}_{m=1}^{M}$ to approximate $\mathbb{E}_{\alpha} [f]$.}\footnotemark
\ELSIF{{\bf goal is density estimation:}}
\STATE{Use the histogram method \eqref{eq:hist} with $\left\{ f( \tilde{\alpha } _m ) \right\}_{m=1}^M$ to estimate the {\rm PDF} of $f$.}
\ENDIF
\end{algorithmic}
\end{algorithm}
\footnotetext{Any standard integration technique could work here, provided sufficient smoothness. If $f(\alpha)$ is smooth, one can approximate $\mathbb{E}_{\alpha} = \hat{f} (0) \approx \hat{f}_N (0)$, see \cite{xiu2010numerical}.}
\noindent
Because of its spectral accuracy (Corollary \ref{corr:moment_gpc}), the number of sample points that is required for gPC to achieve a certain precision is considerably smaller than for Monte-Carlo. To the best of our knowledge, however, there is no convergence result for density estimation using gPC which is analogous to Corollary \ref{corr:moment_gpc}.

Algorithm \ref{alg:gpc_int} can also approximate {\em non-smooth} quantities of interest $f(\alpha)$, as long as~$u(\cdot ;\alpha)$ is smooth, see Sec.\ \ref{sec:cnls} and \cite{best2017paper}. The choice of the histogram method in step 4 is discussed in Sec.~\ref{sec:final}.

The evaluation of $\left\{ f( u _N^{\rm gpc} (\cdot, \tilde{\alpha } _m ) ) \right\}_{m=1}^M$ in step 3 is computationally cheap, as it amounts to a substitution in a polynomial. Therefore, there is essentially no computational cost for choosing $M$ to be sufficiently high for the histogram method. This algorithm is also non-intrusive, in the sense that it only requires direct simulations of the deterministic system~\eqref{eq:general_settings}~with specific $\alpha_j$ values (as opposed to, e.g., Galerkin-type methods~\cite{debusschere2004numerical, 
lemaitre2004wavelet, xiu2002galerkin}). Our choice of the Histogram method for density estimation will be explained in Sec.\ \ref{sec:spline_pdf}.

\section{Density-Estimation and Spline-based UQ}\label{sec:spline}

{\revdone Despite the prevalence of surrogate models in numerical methods and of density-estimation in UQ applications \cite{ablowitz2015interacting, chen2005uncertainty, colombo2018basins,  zabaras2007sparse, le2010asynchronous,  best2017paper, ullmann2014pod}, to the best of our knowledge, the adequacy of surrogate models for density estimation has not been addressed in the UQ literature. To study this problem, we first write an explicit relation between a function $f:\Omega \to \mathbb{R}$ and the PDF that it induces on $\mathbb{R}$:
\begin{lemma}\label{lem:pdf}
Let $f$ be a real, piecewise monotone, continuously differentiable function on $[a,b]$, where $-\infty\leq a <b \leq \infty$, and let $\mu$ be an absolutely continuous probability measure on $[a,b]$, i.e., there is $c\in L^1 \left( [a,b] \right)$ such that~$d\mu (\alpha) =~c(\alpha)d\alpha$. Then 
\begin{equation}\label{eq:pdf_inve}
p_f(y) = \sum\limits_{f(\alpha _j)=y} \frac{c(\alpha _j)}{|f'(\alpha _j)|} \, ,
\end{equation}
where $p(y)$ is the {\rm PDF} of $f$.
\end{lemma}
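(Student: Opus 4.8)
The plan is to reduce the statement to the classical change-of-variables formula for densities by exploiting the piecewise-monotone structure of $f$. First I would partition $[a,b]$ into finitely many closed subintervals $I_1,\dots,I_K$, with breakpoints at the (finitely many) local extrema of $f$, so that $f$ restricted to the interior of each $I_k$ is strictly monotone. On each $I_k$, $f$ is then a strictly monotone $C^1$ bijection onto an interval $J_k=f(I_k)$, and wherever $f'\neq 0$ it has a $C^1$ local inverse $g_k:=(f|_{I_k})^{-1}$ with $g_k'(y)=1/f'(g_k(y))$ by the inverse function theorem.

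Next, since $\mu$ is absolutely continuous with $d\mu=c\,d\alpha$, the push-forward measure $f_*\mu$ is absolutely continuous as well; in particular its CDF $P(y)=\mu(\{f<y\})$ is absolutely continuous, hence $p_f=P'$ almost everywhere. I would write $P(y)=\sum_{k=1}^{K}\mu(\{\alpha\in I_k:\ f(\alpha)<y\})$ and evaluate each summand separately. If $f$ is increasing on $I_k=[p,q]$, then for $y$ in the interior of $J_k$ the set $\{\alpha\in I_k:\ f(\alpha)<y\}$ is, up to endpoints, the interval $[p,g_k(y))$, so the summand equals $\int_p^{g_k(y)}c(\alpha)\,d\alpha$; differentiating by the fundamental theorem of calculus and substituting $g_k'(y)=1/f'(g_k(y))$ gives $c(g_k(y))/f'(g_k(y))$. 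If $f$ is decreasing on $I_k$ the set is $(g_k(y),q]$, and the same computation yields $-c(g_k(y))/f'(g_k(y))$, which equals $c(g_k(y))/|f'(g_k(y))|$ since $f'<0$ there; in both cases the summand contributes $c(g_k(y))/|f'(g_k(y))|$. For $y\notin J_k$ the summand is locally constant in $y$ and contributes nothing.

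Summing over $k$ and noting that, as $k$ varies, the points $g_k(y)$ range exactly over the finite preimage set $f^{-1}(y)$, I would conclude that $p_f(y)=\sum_{f(\alpha_j)=y} c(\alpha_j)/|f'(\alpha_j)|$ for every $y$ that is neither one of the finitely many breakpoint values $f(a_k)$ nor a critical value of $f$. The only real obstacle is to justify that this exceptional set of $y$'s is Lebesgue-null, so that the identity holds $f_*\mu$-almost everywhere — which is all that is required, since $p_f$ is determined only up to a null set. The breakpoint values are finite in number, and the set of critical values $\{f(\alpha):\ f'(\alpha)=0\}$ has measure zero by the one-dimensional Sard theorem, which needs only $f\in C^1$. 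A minor point worth noting is that ``monotone'' here must mean strictly monotone on each piece: if $f$ were constant on a subinterval carrying positive $c$-mass, then $f_*\mu$ would have an atom and fail to be absolutely continuous, so strict monotonicity is effectively forced by the assumption that $p_f$ exists.
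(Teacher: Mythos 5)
Your proof is correct and follows essentially the same route as the paper's: decompose $[a,b]$ into subintervals of strict monotonicity, differentiate the piecewise CDF via the fundamental theorem of calculus and the inverse function theorem, and sum the contributions over the preimage $f^{-1}(y)$. You add rigor the paper's terse argument omits --- the almost-everywhere caveat, the Sard-type justification that the exceptional set of breakpoint and critical values is null, and the remark that strict monotonicity on each piece is effectively forced --- but the underlying argument is identical.
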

\begin{proof}
See Appendix \ref{app:pdf_pf}.
\end{proof}

Because polynomial approximations (e.g., gPC) tend to be oscillatory, they "add" many artificial extremal points. Hence, by Lemma \ref{lem:pdf}, the PDFs that they induce might deviate considerably from the exact one. To elucidate this point, in Lemma \ref{lem:counter_pdf} we consider a smooth function $f$ which is approximated by a highly oscillatory function~$g$. In this example, having an upper bound on $\|f-g\|_r$ for some $r\geq 1$ does not yield an upper bound on $\|p_f-p_g\|_q$, where $p_f$ and $p_g$ are the PDFs induced by $f(\alpha)$ and $g(\alpha)$, respectively, and $q\geq 1$, because of the numerous "artificial" extremal points of $g$. 
\begin{lemma}\label{lem:counter_pdf}
Let $\Omega = [0,1]$ equipped with the Lebesgue measure. Under the above notations, then for every $\epsilon >0$, there exists two functions $f$ and $g$ such that $\|f-g\|_{\infty} \leq \epsilon$, but $\|p_f-\tilde{p_g}\|_{\infty} \geq 1/2$.
\end{lemma}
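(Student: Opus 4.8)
The plan is to construct an explicit pair $f,g$ making the phenomenon transparent: take $f$ to be a simple smooth function with a well-controlled PDF, and take $g=f+\epsilon\,\phi$, where $\phi$ is a high-frequency oscillation of amplitude $1$ scaled down by $\epsilon$, so that $\|f-g\|_\infty=\epsilon\|\phi\|_\infty\le\epsilon$ is immediate. The point of the construction is that $g'=f'+\epsilon\,\phi'$, and by choosing $\phi$ to oscillate fast enough, $\phi'$ can be made arbitrarily large; in particular $g$ acquires a huge number of spurious critical points, and on large sub-intervals $g$ is nearly flat (slope close to $0$). By Lemma~\ref{lem:pdf}, wherever $|g'(\alpha)|$ is small the contribution $c(\alpha)/|g'(\alpha)|$ to $p_g$ blows up, so $p_g$ develops tall spikes that $p_f$ does not have, forcing $\|p_f-p_g\|_\infty$ to be large regardless of how small $\epsilon$ is.

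Concretely, I would let $f(\alpha)=\alpha$ on $[0,1]$, so that (with the Lebesgue measure, $c\equiv 1$) $f$ is a bijection onto $[0,1]$ with $f'\equiv 1$, hence $p_f\equiv\mathbbm{1}_{[0,1]}$, a clean reference PDF. Then set $g(\alpha)=\alpha+\frac{\epsilon}{2}\sin(2\pi k\alpha)$ for a large integer $k$ to be chosen; clearly $\|f-g\|_\infty\le\epsilon/2\le\epsilon$. We have $g'(\alpha)=1+\pi\epsilon k\cos(2\pi k\alpha)$. Choosing $k$ large enough that $\pi\epsilon k>2$ guarantees $g'$ changes sign, so $g$ is piecewise monotone with $2k$ monotone branches, and — this is the key quantitative step — on a set of $\alpha$ of positive, $k$-independent measure near each point where $\cos(2\pi k\alpha)=-1/(\pi\epsilon k)$, we have $|g'(\alpha)|$ small. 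More carefully, I would estimate the measure of $\{\alpha\in[0,1]:|g'(\alpha)|\le\delta\}$ from below and note that for $y$ in the corresponding range of $g$, the sum in \eqref{eq:pdf_inve} contains at least one term $1/|g'(\alpha_j)|\ge 1/\delta$, so $p_g(y)\ge 1/\delta$ on a set of $y$'s of positive measure; taking $\delta$ small (equivalently $k$ large relative to $\epsilon$) makes $\|p_g\|_\infty\ge 1/\delta$ as large as we like, and since $p_f$ is bounded by $1$, $\|p_f-p_g\|_\infty\ge 1/\delta-1\ge 1/2$. (The tilde $\tilde p_g$ in the statement presumably denotes a normalized or truncated version of $p_g$; the same argument applies since renormalization by the total mass, which is $1$, does not shrink the spikes, and truncation to the essential range of $g$ keeps the spike locations.)

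The main obstacle is the bookkeeping around the non-injectivity of $g$: once $g$ is not monotone, $y\mapsto p_g(y)$ is a sum over the preimage, and one must make sure the spike from a near-critical branch is not cancelled or diluted — but since all terms $c(\alpha_j)/|g'(\alpha_j)|$ are nonnegative, any single small-slope preimage already forces $p_g(y)$ to be large, so there is nothing to cancel; the only real work is the elementary lower bound on $\mathrm{meas}\{|g'|\le\delta\}$ and the verification that these $\alpha$'s map to a set of $y$'s of positive measure (which follows because on each monotone branch $g$ is an absolutely continuous bijection onto its image). A secondary technical point is ensuring $g$ satisfies the hypotheses of Lemma~\ref{lem:pdf} — piecewise monotone and $C^1$ — which is immediate for this smooth $g$ with finitely many critical points. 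I would also remark that the construction shows the failure is genuinely about the derivative: one may take $f$ and $g$ arbitrarily close in any $L^r$ norm (indeed in $C^0$), yet $g'$ is far from $f'$ and acquires spurious zeros, which is exactly the mechanism flagged in the discussion following Lemma~\ref{lem:pdf}.
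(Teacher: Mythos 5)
Your construction is correct, but it takes a genuinely different route from the paper's. The paper also perturbs $f(\alpha)=\alpha$ by a sine, but calibrates amplitude and frequency together ($g(\alpha)=\alpha+\delta\sin((2\delta)^{-1}\alpha)$) so that $g'=1+\tfrac12\cos((2\delta)^{-1}\alpha)$ stays bounded away from zero: $g$ remains \emph{monotone}, the formula $p_g(y)=1/g'(g^{-1}(y))$ applies on the nose with a single-term preimage, and the $O(1)$ discrepancy between $p_f\equiv 1$ and $p_g$ comes simply from $g'$ deviating from $f'$ by $1/2$ at some points --- no blow-up, no preimage bookkeeping, no measure-theoretic verification. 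You instead push the frequency high enough ($\pi\epsilon k>2$) that $g'$ changes sign, so $g$ acquires genuine spurious critical points and $p_g$ develops unbounded spikes; you then have to (and do) address the two resulting technicalities: that a single nonnegative term $1/|g'(\alpha_j)|\ge 1/\delta$ in the preimage sum already forces $p_g(y)\ge 1/\delta$, and that the near-critical set maps to a set of $y$ of positive measure so the essential supremum is indeed large. What your version buys is a strictly stronger conclusion ($\|p_f-p_g\|_\infty$ can be made arbitrarily large, not merely $\ge 1/2$) and a more faithful illustration of the ``artificial extremal points'' failure mode that motivates the paper's use of splines; what the paper's version buys is a two-line proof with no analysis beyond the chain rule. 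Both are valid proofs of the stated lemma, and your reading of $\tilde{p_g}$ as a typo for $p_g$ matches the paper's own proof, which bounds $\|p_f-p_g\|_\infty$.
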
 
\begin{proof}
Let $f(\alpha) = \alpha$ and $g(\alpha)= \alpha ~+\delta\sin ((2\delta)^{-1}\alpha)$. By direct differentiation $g'(\alpha) = 1+~2^{-1} \cos((2\delta)^{-1}\alpha)$ and $f'(\alpha) \equiv 1$. Since $f$ is monotone, and since $g$ is monotone for sufficiently small $\delta$, then by Lemma \ref{lem:pdf} with $c(\alpha)\equiv 1$, and so $p_f(y)=1/f'(f^{-1}(y))\equiv 1$ and $p_g(y)=~1/g'(g^{-1}(y))$. Specifically, there exists $y\in \mathbb{R}$ such that $p_g(y)= 1/2$, and so $\|p_f-p_g\|_{\infty} \geq 1/2$, irrespective of $\|f-g\|_{\infty}=\delta$, which can be made arbitrarily small.
\end{proof} 
\remark A similar argument also shows that $\|f-g\|_r$ does not control $\|p_f-p_g\|_q$ for {\em any} $1\leq q,r \leq \infty$.

To propose a surrogate model for which accurate density-estimation is guaranteed, we first note that $f_N^{\rm gpc}(\alpha)$ is the interpolating polynomial of~$f$ of order $N-1$ at the Gauss quadrature points $\left\{ \alpha _j \right\} _{j=1}^{N}$ \cite{const2012sparse, hsu2017design}.}
This suggests that other interpolants of $f(\alpha)$ can be used in Algorithm \ref{alg:gpc_int}. In what follows, we argue that for our computational tasks, splines provide a better way to approximate~$f(\alpha)$ and its associated PDF.

We recall that splines are piecewise polynomials of degree $m$, with $k<m$ smooth derivatives. Given an interval $\Omega = [\alpha_{\min},\alpha_{\max}]$ and a grid $\alpha_{\min} = \alpha_{1} < \alpha _2 < \cdots <\alpha _N = \alpha_{\max}$, the interpolating cubic spline $s_N(\alpha)$ is a $C^2$, piecewise-cubic polynomial that interpolates~$f(\alpha)$~at~$\left\{ \alpha _j \right\} _{j=1}^N$, endowed with two additional boundary conditions. Three standard choices are {\revdone ({\em i}) The natural cubic spline, for which $\frac{d^2}{d\alpha^2}f_N^{\rm spline}(\alpha _1) = \frac{d^2}{d\alpha^2}f_N^{\rm spline}(\alpha _N) = 0$, ({\em ii}) The "not-a-knot" spline, for which $\frac{d^3}{d\alpha^3}f_N^{\rm spline}$ is continuous at $\alpha_2$ and $\alpha_{N-1}$, and ({\em iii}) The clamped spline, for which $\frac{d}{d\alpha}f_N^{\rm spline}(\alpha _j) = \frac{d}{d\alpha} f(\alpha_j)$ for $j=1,N$.}
Our decision to use splines is motivated by the following reasons:
\begin{enumerate}
\item  The error of spline interpolation is guaranteed to be "small" for {\em any} sample size, in the following sense:

\begin{theorem}[\cite{beatson1986spline, hall1976bounds}]\label{thm:hall_spl}
Let $f\in C^{m+1} \left( \left[ \alpha_{\min} , \alpha_{\max} \right] \right)$, and let $f_N ^{\rm spline}$ be its "not-a-knot", clamped or natural {\revdone $m$-th order} spline interpolant. Then
\begin{equation}
\big\|\big(f(\alpha) -f_N^{\rm spline}(\alpha) \big) ^{(j)} \big\|_{L^{\infty} [\alpha_{\min}, \alpha_{\max} ] } \leq C_{\rm spl} ^{(j,m)}\left\| f^{(m+1)} \right\| _{\infty} h_{\max} ^{m+1-j} \, , \qquad j=0,1,\ldots, m-1 \, ,
\end{equation}
where $C_{\rm spl} ^{(j,m)}>0$ is a universal constant that depends only on the {\em type} of boundary condition, $m$, and $j$, and $h_{\max} = \max\limits_{1<j\leq N} \lvert \alpha_j - \alpha _{j-1} \rvert$.
\end{theorem}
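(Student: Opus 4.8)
This is a classical approximation-theory estimate, and the plan is to reproduce the standard argument behind \cite{hall1976bounds, beatson1986spline}. It has two ingredients: a \emph{mesh-independent} bound for the invertibility of the linear system that defines the interpolant, and a \emph{descent} from the top derivative of the interpolation error down to the error itself, exploiting that the error vanishes at the interpolation nodes. No quasi-uniformity of the grid is needed, since every step uses only the one-sided bound $h_k \le h_{\max}$.

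First I would write down the linear system for $f_N^{\rm spline}$. For $m=3$, setting $M_j := \frac{d^2}{d\alpha^2} f_N^{\rm spline}(\alpha_j)$, the $C^2$ matching at interior knots gives, after normalization, equations $\mu_j M_{j-1} + 2 M_j + \lambda_j M_{j+1} = d_j$ with $\mu_j,\lambda_j \ge 0$ and $\mu_j + \lambda_j = 1$, so the tridiagonal matrix $A$ is strictly diagonally dominant and $\|A^{-1}\|_\infty \le 1$ uniformly in the grid; one then checks that each admissible boundary row (clamped, not-a-knot, and — with the extra care required at the endpoints — natural) preserves this, so the endpoint conditions do not pollute the interior. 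For general order $m$ I would instead expand $f_N^{\rm spline}$ in the B-spline basis, where the interpolation conditions become a banded, totally positive system, and invoke the classical mesh-independent stability estimate for odd-degree spline interpolation (of de~Boor type): $\|A^{-1}\| \le K_m$, with $K_m$ depending only on $m$ and the boundary-condition type, and with no mesh-ratio restriction. \textbf{This stability step is the main obstacle}: for even degrees, and for some boundary conditions under arbitrarily nonuniform meshes, uniform stability genuinely fails, so the proof must exploit the specific structure of odd degree and of these three boundary conditions — this is the technical heart of \cite{hall1976bounds, beatson1986spline}, and extracting the sharp constants $C_{\rm spl}^{(j,m)}$ requires a quantitative refinement of it.

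Granting the stability bound, I would finish by a bootstrap. Substituting the exact values $f^{(m-1)}(\alpha_j)$ (for $m=3$, the values $f''(\alpha_j)$) into the coefficient system leaves a residual that is a divided difference of $f$ over a few consecutive knot intervals; a Taylor expansion shows this residual is $O(\|f^{(m+1)}\|_\infty h_{\max}^{2})$, and combining it with $\|A^{-1}\| \le K_m$ and with the standard error of piecewise-linear interpolation of $f^{(m-1)}$ gives $\|(f-f_N^{\rm spline})^{(m-1)}\|_\infty \le C\,\|f^{(m+1)}\|_\infty h_{\max}^{2}$, which is the case $j=m-1$. To reach smaller $j$, observe that the error $e := f - f_N^{\rm spline}$ vanishes at all $N$ interpolation nodes; by iterated Rolle's theorem $e^{(\ell)}$ has a zero within a bounded, only-$m$-dependent cluster of knot intervals around any prescribed interval $[\alpha_k,\alpha_{k+1}]$, so integrating from that zero yields the local bound $\|e^{(\ell-1)}\|_{L^\infty[\alpha_k,\alpha_{k+1}]} \le C_m\, h_{\max}\, \max_{|i-k|\le m}\|e^{(\ell)}\|_{L^\infty[\alpha_i,\alpha_{i+1}]}$. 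Running this from $\ell = m-1$ down to $\ell = 1$ costs one factor $h_{\max}$ per step, and taking the maximum over $k$ yields $\|e^{(j)}\|_\infty \le C_{\rm spl}^{(j,m)}\|f^{(m+1)}\|_\infty h_{\max}^{m+1-j}$ for $j=0,1,\ldots,m-1$, with $C_{\rm spl}^{(j,m)}$ an explicit product of $K_m$ and universal divided-difference and Rolle constants.
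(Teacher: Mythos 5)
The paper offers no proof of this theorem: it is quoted verbatim from the cited literature (Hall--Meyer for the sharp clamped-cubic constants, Beatson for the other cubic boundary conditions), so there is nothing in the manuscript to compare your argument against line by line. Your sketch does reconstruct the standard architecture of those proofs --- set up the banded system for the knot values of $(f_N^{\rm spline})^{(m-1)}$, prove a mesh-independent bound on the inverse, estimate the consistency residual by Taylor expansion to get the case $j=m-1$, and then descend in $j$ by iterated Rolle plus integration from a nearby zero of the error, gaining one factor of $h_{\max}$ per step. The exponent bookkeeping is right ($h_{\max}^2$ at $j=m-1$ down to $h_{\max}^{m+1}$ at $j=0$), and you are correct and appropriately honest that the uniform stability bound $\|A^{-1}\|\le K_m$ is the technical heart that the cited papers actually supply.

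Two cautions on the part you wave at. First, for the not-a-knot condition the boundary rows do \emph{not} preserve strict diagonal dominance in the naive sense; the standard fix (as in Beatson--Chacko) is to eliminate the boundary unknowns into the first and last interior equations and verify that the reduced tridiagonal system is still uniformly invertible --- ``one then checks that each admissible boundary row preserves this'' understates what has to be done there. Second, the mesh-ratio-free stability you invoke for general odd order $m$ on arbitrary nonuniform grids is genuinely delicate and is only fully worked out in the cited references for the cubic case; the paper's own statement for general $m$ already leans on the broader spline literature (Schultz, de~Boor) rather than on the two cited papers, so a complete proof at general order would need those additional sources, not just a ``quantitative refinement'' of Hall--Meyer. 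Neither point invalidates your outline, but both are where a fleshed-out proof would have to do real work.
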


\item Spline interpolation is predominantly local. For further details, see Appendix \ref{ap:local}.

Thus, {\em although $f_N^{\rm spline}(\alpha)$ depends on $\left\{ f(\alpha _1), \ldots , f(\alpha_N) \right\}$, it predominantly depends on the few values $f(\alpha_j)$ for which $\alpha _j$ is adjacent to $\alpha$.} Therefore, large derivatives and discontinuities of $f(\alpha)$ may impair the accuracy of $f_N^{\rm spline}(\alpha)$ only locally.\footnote{For a review of cubic splines that are {\em strictly} local, see \cite{beatson1992cubic}.} This is in contrast to gPC (and polynomial interpolation in general), where discontinuities and large derivatives of $f$ decrease the approximation accuracy across the entire domain. In addition, splines can be constructed using any choice of sampling points.
\end{enumerate}



\noindent
In light of these considerations, we propose to replace the gPC interpolant with a spline:

\begin{algorithm}
\caption{Spline-based estimation}\label{alg:spline_u}
\begin{flushleft}
Let $\Lambda = \left\{\alpha _1 , \ldots, \alpha _N \right\}$ be a uniform grid on $[\alpha_{\min}, \alpha_{\max} ]$.
\end{flushleft}
\begin{algorithmic}[1]
\STATE{For each $\alpha _j \in \Lambda$, solve \eqref{eq:general_settings} with $\alpha = \alpha_j$ to obtain  $u \left(t, {\bf x}  ; \alpha _j  \right)$.}
\STATE{Approximate $u (t, x;\alpha ) \approx u_N^{\rm spline} (t, {\bf x};\alpha )$, where $u_N^{\rm spline}$ is a cubic spline interpolant on $\Lambda $. }

\STATE{Approximate $f(\tilde{\alpha} _m) \approx f(u _N ^{\rm spline}(\cdot, \tilde{\alpha } _m ) )$ on a sample of $M\gg N$ points $\left\{ \tilde{\alpha}_m \right\} _{m=1}^M$ which are i.i.d. according to $c(\alpha)$.}
\IF{{\bf goal is moment estimation:}}
\STATE{Use the trapezoidal integration rule with $\left\{ f(  \tilde{\alpha } _m ) \right\}_{m=1}^M$ to approximate $\mathbb{E}_{\alpha} [f]$.}
\ELSIF{{\bf goal is density estimation:}}
\STATE{Use the histogram method \eqref{eq:hist} with $\left\{ f(  \tilde{\alpha } _m ) \right\}_{m=1}^M$ to approximate the {\rm PDF} of $f$.}
\ENDIF
\end{algorithmic}
\end{algorithm}
\noindent \remark See Appendix \ref{app:code} for a MATLAB implementation of this algorithm.

Which cubic spline should be used in line 2? If $f'(\alpha_{\min})$~and $f'(\alpha_{\max})$ are known, then one should use the {\em clamped} cubic spline (or the natural cubic spline if these derivatives are zero). When the boundary derivatives are unknown, however, the "not-a-knot" interpolating cubic spline should be used (as indeed was done in this manuscript). See \cite{beatson1992cubic} for further discussion.

Algorithm \ref{alg:spline_u} is {\em identical} to Algorithm \ref{alg:gpc_int}, except for two substantial points:
\begin{enumerate}
\item The sampling grid is uniform, rather than the Gauss quadrature grid.\footnote{Algorithm \ref{alg:spline_u} can be performed with {\em any} choice of grid points. For clarity, we present it only with a uniform grid.}
\item The gPC interpolant $u_N^{\rm gpc}$ is replaced by a cubic spline interpolant $u_N^{\rm spline}$. 
\end{enumerate}

\remark This method is not to be confused with {\em spline-smoothing}, in which one approximates the PDF $p$ with splines \cite{ eubank1999nonparametric, wahba1990spline}. Thus, Algorithm \ref{alg:spline_u} approximates $u$ with a spline, but the resulting approximation of the PDF $p$ {\em is not a spline}. 

\subsection{Accuracy of Algorithm \ref{alg:spline_u} for density estimation}\label{sec:spline_pdf}

The density estimation error of Algorithm \ref{alg:spline_u} has two components - the error of the spline approximation (line 3) and that of the histogram method (line 7).\footnote{In terms of density estimators, this can be explained by the following argument. Denote by $p$, $p_N$, and $\hat{p}_{N,M}$ the density of $f$, $f_N$ and the density estimator of Algorithm \ref{alg:gpc_int} or \ref{alg:spline_u}, respectively. Then the approximation error (in any norm) satisfies $\|p-\hat{p}_{N,M}\| \leq \| p- p_N \| + \|p_N -\hat{p}_{N,M} \| $. The second term vanishes as $M\to \infty$ and $L$ is given by \eqref{eq:bins_Opt}, in which case the density estimation error is roughly the bias incurred from approximating $f$ by $f_N$.}

The accuracy of the histogram method in line 7 depends on the number of bins $L$ and on the number of samples $M$ at lines 3 and 7. If the number of bins is chosen to be
\begin{equation}\label{eq:bins_Opt}
L_{\rm opt} = K_f M^{-\frac{1}{3}} \, , \qquad K_f =\left(\frac{\|f'\|_2 ^2  [\max f - \min f]}{6} \right)^{\frac{1}{3}} \, , 
\end{equation} 
the mean squared $L^2$ error (MISE) of the histogram method decays as $M^{-\frac{2}{3}}$ \cite{wasserman2013all}.\footnote{In practice, $f$ and $f'$ are often unknown, and so $K_f$ needs to be estimated.} Because the computational cost of increasing $L$ and $M$ is negligible, they can be set sufficiently large so that the accuracy of Algorithm~\ref{alg:spline_u}~mainly depends on the difference between the PDFs of $f$ and $f_N ^{\rm spline}$, denoted by~$p_f$~and~$p_{f_N}$~respectively.
We motivate the choice of the histogram method to estimate the density by four factors:
\begin{enumerate}
\item Implementing the histogram method is straightforward, and can be done with a few lines of code (see Appendix \ref{app:code}).

\item The accuracy of the histogram method can be improved and controlled by varying the number of samples $M$, with a negligible computational cost.

\item The histogram method can be used even when the quantity of interest $f$ is not smooth.

\item The histogram method can be used for a multi-dimensional random parameter $\pmb{\alpha}$.
\end{enumerate}
In principle, we could have used the explicit relation \eqref{eq:pdf_inve} to compute the PDF. Because this approach does not have the above advantages, however, the histogram method was chosen.

\subsection{Accuracy of spline-based density estimation}

{\revdone In Section \ref{sec:spline_pdf} we showed that the accuracy of density estimation of Algorithms \ref{alg:gpc_int} and \ref{alg:spline_u} is determined by the error of approximating the density with that of the surrogate model, and not by the error of the histogram method. By Lemma \ref{lem:pdf}, if $f'(\alpha)$ is bounded away from zero, then $p$ is smooth.} As noted, however, the gPC polynomial interpolant $f_N^{\rm gpc}(\alpha)$ tends to be oscillatory, and so it might add artificial extermal points where $\frac{d}{d\alpha}f_N^{\rm gpc}(\alpha) =0$, see e.g., Fig. \ref{fig:tanh9_PDF}(c). At every such point where $\frac{d}{d\alpha}f_N^{\rm gpc}(\alpha) =0$, the PDF approximation becomes unbounded, and so a large error in the PDF estimation occurs. This is seldom the case with the spline interpolant, which due to its local nature (see Lemma \ref{lem:spline_local}) does not produce numerical oscillations throughout its domain $\Omega$. Indeed, the natural cubic spline $f_N^{\rm spline}(\alpha)$ has the "minimum curvature" property \cite{prenter2008splines}, which implies that it oscillates "very little" about the original function. This notion is made precise by the following result:
\begin{theorem}\label{thrm:cub_pdf}
Let $f\in C^{m+1} ([\alpha_{\min} , \alpha_{\max} ])$ with $\lvert f'(\alpha)\rvert \geq a >0$, let $\alpha$ be distributed by~$c(\alpha)d\alpha$, where $c \in C^1 \left( [\alpha_{\min}, \alpha_{\max}]\right)$, and let $p_f$ and $p_{f_N}$ be the {\rm PDF}s of $f(\alpha)$ and of $f_N = f_N ^{\rm spline}$, its natural, "not-a-knot", or clamped {\revdone $m$-th order} spline interpolant on a uniform grid of size $N$. {\revdone Then, for any $1\leq q < \infty$}
\begin{equation}\label{eq:thrm3}
\|p_f - p_{f_N}\|_q \leq KN^{-m} \, , \qquad  N> \sqrt[m]{\frac{2C_{\rm spl}^{(1,m)} \left\| f^{(m+1)} \right\|_{\infty}}{a}}\left(\alpha_{\max} - \alpha_{\min} \right) \, \, ,
\end{equation}
where $C_{\rm spl} ^{(1,m)}$ is given by Theorem {\rm \ref{thm:hall_spl}} and $K$ depends only on $f(\alpha)$, $c(\alpha)$, $q$, and $\lvert \alpha_{\max} - \alpha_{\min} \rvert$.\end{theorem}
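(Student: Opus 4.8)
The plan is to reduce the statement to the explicit density formula of Lemma~\ref{lem:pdf} and then view $p_{f_N}$ as a small perturbation of $p_f$. First I would observe that $|f'|\ge a>0$ forces $f$ to be strictly monotone on $[\alpha_{\min},\alpha_{\max}]$, so $f^{-1}$ is single-valued and Lemma~\ref{lem:pdf} collapses to $p_f(y)=c\bigl(f^{-1}(y)\bigr)/\bigl|f'(f^{-1}(y))\bigr|$ for $y$ in the interval $I:=f\bigl([\alpha_{\min},\alpha_{\max}]\bigr)$, with $p_f\equiv 0$ outside $I$. The lower bound imposed on $N$ is precisely what is needed so that Theorem~\ref{thm:hall_spl} with $j=1$ gives $\|f'-f_N'\|_\infty < a/2$; hence $|f_N'|\ge a/2>0$, so $f_N$ is strictly monotone as well, Lemma~\ref{lem:pdf} applies to $f_N$ with a single summand, and --- because the spline interpolates $f$ at the endpoints $\alpha_1=\alpha_{\min}$ and $\alpha_N=\alpha_{\max}$ --- its range is again $I$. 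Consequently $p_f-p_{f_N}$ is supported on $I$ and both densities are bounded there, so it is enough to bound $\|p_f-p_{f_N}\|_{L^\infty(I)}$ and multiply by $|I|^{1/q}$ at the end; this is where the dependence of $K$ on $q$ enters.

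For $y\in I$ put $u=f^{-1}(y)$ and $v=f_N^{-1}(y)$. From $f(u)=y=f_N(v)$ and the mean value theorem, $a\,|u-v|\le |f(u)-f(v)| = |f_N(v)-f(v)| \le \|f-f_N\|_\infty$, so Theorem~\ref{thm:hall_spl} with $j=0$ yields $|u-v|\le a^{-1}C_{\rm spl}^{(0,m)}\|f^{(m+1)}\|_\infty\,h^{m+1}$, where $h$ is the uniform grid spacing, $h\le(\alpha_{\max}-\alpha_{\min})/(N-1)$. I would then split
\[
 p_f(y)-p_{f_N}(y)=\left(\frac{c(u)}{f'(u)}-\frac{c(v)}{f'(v)}\right)+c(v)\,\frac{f_N'(v)-f'(v)}{f'(v)\,f_N'(v)}\,.
\]
Since $c\in C^1$ and $f\in C^{m+1}$ with $|f'|\ge a$, the map $\alpha\mapsto c(\alpha)/f'(\alpha)$ is Lipschitz on $[\alpha_{\min},\alpha_{\max}]$ with constant controlled by $\|c\|_\infty,\|c'\|_\infty,\|f''\|_\infty$ and $a$, so the first term is $O(|u-v|)=O(h^{m+1})$; the second term is at most $2a^{-2}\|c\|_\infty\|f'-f_N'\|_\infty\le 2a^{-2}\|c\|_\infty C_{\rm spl}^{(1,m)}\|f^{(m+1)}\|_\infty\,h^m = O(h^m)$. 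Adding the two contributions gives $\|p_f-p_{f_N}\|_{L^\infty(I)}\le C\,h^m$ with $C$ depending only on $f$ and $c$, and since $h\le 2(\alpha_{\max}-\alpha_{\min})/N$ we obtain $\|p_f-p_{f_N}\|_q\le |I|^{1/q}C h^m\le K N^{-m}$, where $K$ absorbs $|I|^{1/q}$, the powers of $\alpha_{\max}-\alpha_{\min}$, the universal constants $C_{\rm spl}^{(0,m)},C_{\rm spl}^{(1,m)}$, and the quantities $\|f^{(m+1)}\|_\infty,\|f''\|_\infty,a,\|c\|_\infty,\|c'\|_\infty$; i.e.\ $K=K(f,c,q,|\alpha_{\max}-\alpha_{\min}|)$, as asserted.

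The delicate point is not any individual estimate but the structural bookkeeping that makes the perturbation argument legitimate: one must check that $f_N$ inherits strict monotonicity (no spurious critical points, which is exactly what the threshold on $N$ buys via the $j=1$ case of Theorem~\ref{thm:hall_spl}), that $p_f$ and $p_{f_N}$ are supported on the same interval $I$ so their difference is not governed by a one-sided boundary region, and that the natural, ``not-a-knot'' and clamped boundary conditions all preserve endpoint interpolation and the constants $C_{\rm spl}^{(j,m)}$. Once strict monotonicity of $f_N$ is secured, the remainder is essentially forced, and the rate $N^{-m}$ is inherited directly from the spline error rate of Theorem~\ref{thm:hall_spl}.
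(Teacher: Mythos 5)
Your proof is correct and follows essentially the same route as the paper's: use the threshold on $N$ together with Theorem~\ref{thm:hall_spl} ($j=1$) to secure $|f_N'|\ge a/2$, hence monotonicity and equal ranges, apply Lemma~\ref{lem:pdf}, bound the displacement $|f^{-1}(y)-f_N^{-1}(y)|=O(h^{m+1})$ via the mean value theorem, and combine this with the $O(h^m)$ derivative error to obtain a uniform $O(h^m)$ bound on the density difference, which is then integrated over the (common, bounded) range. The only, harmless, difference is the splitting of $p_f-p_{f_N}$: yours places the Lipschitz burden on $c/f'$ (so only $\|f''\|_\infty$ and $\|c'\|_\infty$ enter), whereas the paper's triangle inequality involves $|s'(\alpha_\star)-s'(\alpha)|$ and therefore needs the additional, footnoted, observation that $\|f_N''\|_\infty$ is bounded uniformly in $N$.
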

\begin{proof}
See Appendix \ref{ap:cub_pdf_pf}.
\end{proof}
\noindent

{\revdone The proof of Theorem \ref{thrm:cub_pdf} only makes use of two properties of spline interpolation: the $L^{\infty}$ accurate approximationsional noise of the function and its derivative and the uniform bound of the second derivatives (Theorem \ref{thm:hall_spl}). Therefore, Theorem \ref{thrm:cub_pdf} immediately generalizes to a broad family of surrogate models, denoted by $\{g_N\}$:
\begin{corollary}\label{cor:1d_gen}
Let $f(\alpha)$ and $c(\alpha)$ be as in Theorem \ref{thrm:cub_pdf}, let $g_N\in C^1([\alpha_{\min},\alpha_{\max}])$ be a sequence of approximations of $f$ for which $$\|f-g_N\|_{\infty}\,  ,\,  \|f'-g_N'\|_{\infty} \leq KN^{-\tau} \, , \qquad \|g_N''\|_{\infty} < C_g < \infty \, ,$$ where $\tau > 0$, $C_g$, and $K$ are independent of $N$. Then $$\|p_f-p_{g_N}\|_q \leq \tilde{K}N^{-\tau} \, , $$ for any $1\leq q < \infty$, where $p_f$ and $p_{g_N}$ are the PDFs of $f(\alpha)$ and $g_N(\alpha)$, respectively, and $\tilde{K}$ is independent of $N$.
\end{corollary}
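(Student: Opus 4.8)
The plan is to transcribe the proof of Theorem~\ref{thrm:cub_pdf}, which uses splines only through the two properties hypothesized here --- the $L^{\infty}$ rates $\|f-g_N\|_{\infty},\|f'-g_N'\|_{\infty}\le KN^{-\tau}$ and the uniform second-derivative bound $\|g_N''\|_{\infty}<C_g$ --- and to re-run the argument for the abstract sequence $g_N$. First I would restrict to $N\ge(2K/a)^{1/\tau}$, so that $\|f'-g_N'\|_{\infty}\le a/2$; then $|g_N'|\ge a/2$ on $[\alpha_{\min},\alpha_{\max}]$ and, since $|f'|\ge a$ forces $f'$ to keep a constant sign, $g_N'$ has that same sign. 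Hence $g_N$ is strictly monotone, $g_N^{-1}$ is Lipschitz with constant $2/a$, and $f^{-1}$ is Lipschitz with constant $1/a$. Lemma~\ref{lem:pdf} (the monotone case) then gives $p_f(y)=c(f^{-1}(y))/|f'(f^{-1}(y))|$ on $\mathrm{range}(f)$, $p_{g_N}(y)=c(g_N^{-1}(y))/|g_N'(g_N^{-1}(y))|$ on $\mathrm{range}(g_N)$, and $p_f=p_{g_N}=0$ elsewhere.

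Next I would estimate $p_f-p_{g_N}$ pointwise on the common range $\mathcal R:=\mathrm{range}(f)\cap\mathrm{range}(g_N)$. For $y\in\mathcal R$ set $\alpha=f^{-1}(y)$ and $\beta=g_N^{-1}(y)$, so that $f(\alpha)=y=g_N(\beta)$; then $|f(\alpha)-f(\beta)|=|g_N(\beta)-f(\beta)|\le KN^{-\tau}$, and $|f'|\ge a$ gives $|\alpha-\beta|\le KN^{-\tau}/a$. Writing
\[
p_f(y)-p_{g_N}(y)=\frac{c(\alpha)-c(\beta)}{|f'(\alpha)|}+c(\beta)\!\left(\frac{1}{|f'(\alpha)|}-\frac{1}{|g_N'(\beta)|}\right),
\]
I would bound $|c(\alpha)-c(\beta)|\le\|c'\|_{\infty}|\alpha-\beta|$ and $\big|\,|f'(\alpha)|-|g_N'(\beta)|\,\big|\le|g_N'(\beta)-g_N'(\alpha)|+|g_N'(\alpha)-f'(\alpha)|\le C_g|\alpha-\beta|+KN^{-\tau}$, using $|f'(\alpha)|\ge a$, $|g_N'(\beta)|\ge a/2$, and $|c(\beta)|\le\|c\|_{\infty}$. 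This yields $\|p_f-p_{g_N}\|_{L^{\infty}(\mathcal R)}\le C_1N^{-\tau}$, where $C_1$ depends only on $a,\|c\|_{\infty},\|c'\|_{\infty},C_g,K$ and, crucially, not on $N$.

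To pass to $L^q$, I would change variables $y=g_N(\beta)$ on $\mathcal R$; its Jacobian satisfies $|g_N'|\le\|f'\|_{\infty}+K$, so
\[
\int_{\mathcal R}|p_f(y)-p_{g_N}(y)|^q\,dy\le(\|f'\|_{\infty}+K)(\alpha_{\max}-\alpha_{\min})(C_1N^{-\tau})^q,
\]
which contributes $\le\tilde KN^{-\tau}$ to $\|p_f-p_{g_N}\|_q$. It remains to account for the set where $\mathrm{range}(f)$ and $\mathrm{range}(g_N)$ disagree: it lies near the two endpoints, has measure at most $\|f-g_N\|_{\infty}\le KN^{-\tau}$ on each side, and there the non-vanishing density is bounded by $2\|c\|_{\infty}/a$; this is absorbed exactly as in the proof of Theorem~\ref{thrm:cub_pdf}, where --- because $f_N^{\mathrm{spline}}$ interpolates $f$ at $\alpha_{\min}$ and $\alpha_{\max}$ --- this set is empty and $\mathrm{range}(f_N)=\mathrm{range}(f)$.

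I do not expect a genuine obstacle: the corollary is essentially the remark that the proof of Theorem~\ref{thrm:cub_pdf} never uses more than properties (i)--(ii). The part requiring care is the pointwise estimate --- composing the three Lipschitz / modulus-of-continuity bounds (on $f^{-1}$ or $g_N^{-1}$, on $c$, and on $g_N'$ via $C_g$) so that $C_1$ is manifestly independent of $N$, and pinning down the monotonicity threshold $N\ge(2K/a)^{1/\tau}$. The one bookkeeping subtlety is the range mismatch: in $L^q$ with $q>1$ a mismatch of length $N^{-\tau}$ self-contributes only at order $N^{-\tau/q}$, so the clean $N^{-\tau}$ rate uses $\mathrm{range}(g_N)=\mathrm{range}(f)$, as holds for spline interpolation and should be assumed of the surrogate $g_N$.
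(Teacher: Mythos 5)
Your proposal is correct and matches the paper's intent exactly: the paper gives no separate proof of this corollary, merely observing that the proof of Theorem~\ref{thrm:cub_pdf} in Appendix~\ref{ap:cub_pdf_pf} uses only the two hypothesized properties, and your argument is that proof rerun for $g_N$ (with an equivalent, slightly rearranged pointwise decomposition). Your observation about the range mismatch is a legitimate subtlety the paper glosses over --- for the interpolating spline $\mathrm{range}(f_N)=\mathrm{range}(f)$ automatically, whereas for a general surrogate the $O(N^{-\tau})$-measure mismatch set contributes only $O(N^{-\tau/q})$ in $L^q$ for $q>1$, so the clean rate does implicitly require endpoint agreement (or a correspondingly weaker conclusion).
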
}

\remark If $f$ is only piecewise $C^{m+1}$, then $N^{-m}$ convergence is guaranteed when the grid points include the discontinuity points of $f(\alpha)$, since the
proof can be repeated in each interval on which the function is $C^{m+1}$ in the same way.
\remark\label{rem:monotone} Although Theorem \ref{thrm:cub_pdf} applies only to functions whose derivatives are bounded away from $0$, in practice we observe cubic convergence for non-monotone functions as well~(see Sec.~\ref{sec:cnls}). Whether Theorem \ref{thrm:cub_pdf} generalizes to non-monotone cases is unclear.

In our numerical simulations, see Figs.\ \ref{fig:tanh9_PDF}, \ref{fig:tanh9_mod}, \ref{fig:polar_pdf}, and \ref{fig:Xs}, we observe that the cubic convergence is often reached well before $N$ satisfies \eqref{eq:thrm3}. We also observe that the density approximation error $\|p_f - p_{f_N}\|_1$ decays at a faster than cubic rate. A possible explanation for this observation is provided~by

\begin{lemma}\label{lem:quar}
Assume the conditions of Theorem {\rm\ref{thrm:cub_pdf}} for $m=3$, and let $J_N$ be the number of times that $\frac{d}{d\alpha}\big(f(\alpha)-f_N ^{\rm spline}(\alpha)\big)$ changes its sign on $[\alpha_{\min},\alpha_{\max}]$. If $J_N = O(N^{r})$ for $0\leq r \leq 1$, then $\|p_f - p_{f_N}\|_1 \leq KN^{-4+r}$. Specifically, if $J_N$ is uniformly bounded for all $N\in \mathbb{N}$, then $\|p_f-p_{f_N}\|_1 \leq KN^{-4}$.
\end{lemma}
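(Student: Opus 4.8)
The plan is to refine the estimate from Theorem~\ref{thrm:cub_pdf} by tracking where the two densities differ, using the explicit formula~\eqref{eq:pdf_inve}. Since $f$ and $f_N^{\rm spline}$ are both monotone for $N$ satisfying~\eqref{eq:thrm3} (because $\|f'-(f_N^{\rm spline})'\|_\infty \leq C_{\rm spl}^{(1,m)}\|f^{(m+1)}\|_\infty h_{\max}^m < a$ forces $(f_N^{\rm spline})'$ to keep the sign of $f'$), Lemma~\ref{lem:pdf} gives $p_f(y) = c(f^{-1}(y))/|f'(f^{-1}(y))|$ and similarly for $p_{f_N}$ with $f_N = f_N^{\rm spline}$. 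First I would write, after a change of variables $y = f(\alpha)$ (resp. $y = f_N(\alpha)$) or more carefully comparing the two via a common parametrization, the pointwise difference $p_f - p_{f_N}$ in terms of $c - c\circ(\text{small perturbation})$, $f' - f_N'$, and the Jacobians; the dominant contribution is the term proportional to $f'(\alpha) - f_N'(\alpha)$ divided by a quantity bounded below by $a^2/2$. So the heart of the matter is to bound $\|f' - f_N'\|_{L^1}$ rather than $\|f' - f_N'\|_{L^\infty}$, and to show this is $O(N^{-4+r})$ rather than merely $O(N^{-3})$.

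The key idea for the improvement is a superconvergence/oscillation argument on the derivative error $e_N'(\alpha) := f'(\alpha) - f_N'(\alpha)$. By Theorem~\ref{thm:hall_spl}, $\|e_N'\|_\infty = O(N^{-3})$. Between two consecutive sign changes of $e_N'$, the function $e_N(\alpha) = f(\alpha) - f_N(\alpha)$ is monotone, so on such an interval $I_k$ we have $\int_{I_k} |e_N'| = |e_N(\text{right endpoint}) - e_N(\text{left endpoint})| \leq 2\|e_N\|_\infty = O(N^{-4})$ by Theorem~\ref{thm:hall_spl} with $j=0$. Summing over the $J_N = O(N^r)$ monotonicity intervals gives $\|e_N'\|_{L^1} = O(N^r \cdot N^{-4}) = O(N^{-4+r})$. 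This is the decisive step: trading the crude bound $\|e_N'\|_{L^1} \leq (\alpha_{\max}-\alpha_{\min})\|e_N'\|_\infty = O(N^{-3})$ for the telescoping estimate that exploits the extra power in $\|e_N\|_\infty = O(N^{-4})$.

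Then I would feed $\|e_N'\|_{L^1} = O(N^{-4+r})$ back into the pointwise decomposition of $p_f - p_{f_N}$. The term carrying $e_N'$ contributes $O(N^{-4+r})$ in $L^1$ after the change of variables (the Jacobian factors and $c$ are bounded, and $|f'|, |f_N'| \geq a/2$). The remaining terms — those involving $c\circ f^{-1} - c\circ f_N^{-1}$ and the mismatch of the inverse maps — are controlled by $\|f - f_N\|_\infty = O(N^{-4})$ together with $\|c'\|_\infty$ and the Lipschitz bound $|f^{-1}(y) - f_N^{-1}(y)| \leq (2/a)\|f-f_N\|_\infty$, hence are $O(N^{-4})$, which is absorbed. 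Combining, $\|p_f - p_{f_N}\|_1 \leq K N^{-4+r}$, and the uniformly bounded case $r=0$ follows immediately.

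The main obstacle I anticipate is bookkeeping in the pointwise comparison of $p_f$ and $p_{f_N}$: the two densities are defined via two different diffeomorphisms $f, f_N$ of $[\alpha_{\min},\alpha_{\max}]$ onto slightly different intervals, so to extract the clean bound one must either integrate $|p_f - p_{f_N}|$ over $y$ and split according to the images, or pull everything back to the $\alpha$-variable and carefully estimate $\big| c(\alpha)/|f'(\alpha)| - c(f_N^{-1}(f(\alpha)))/|f_N'(f_N^{-1}(f(\alpha)))| \big|$, showing the $f_N^{-1}\circ f$ composition is a near-identity map. Making that estimate while isolating the $e_N'$ term as the sole $O(N^{-4+r})$ (as opposed to $O(N^{-4})$) contributor is the delicate part; everything else is a routine application of Theorems~\ref{thm:hall_spl}, \ref{thrm:cub_pdf} and the mean value theorem.
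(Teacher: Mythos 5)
Your proposal is correct and follows essentially the same route as the paper: both reduce $\|p_f-p_{f_N}\|_1$ to the weighted $L^1$ norm of $f'-f_N'$ plus $O(N^{-4})$ remainders (exactly as in the proof of Theorem~\ref{thrm:cub_pdf}), and both gain the extra power of $h$ by splitting $[\alpha_{\min},\alpha_{\max}]$ at the $J_N$ sign changes of $f'-f_N'$ and converting each piece into a term of size $\|f-f_N\|_\infty=O(N^{-4})$. The only cosmetic difference is that you apply the fundamental theorem of calculus on each monotonicity interval after bounding $c$ by $\|c\|_\infty$, whereas the paper integrates by parts against $c(\alpha)$, producing the same $J_N$ boundary terms plus an additional $\int (f-f_N)c'$ term that sums to $O(N^{-4})$.
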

\begin{proof}
See Appendix \ref{sec:quar}.
\end{proof}

\subsection{Accuracy of moment estimation}
{\revdone While the main focus of this paper is on density-estimation using surrogate model, we also point out two disadvantages of the gPC method for 
moment estimation:}
\begin{enumerate}
\item {\revdone The spectral convergence of the gPC method is attained only {\em asymptotically} as the number of sample points $N$ becomes sufficiently large. For small or moderate values of $N$,~however, its accuracy may be quite poor, due to insufficient resolution, and the global nature of spectral approximation.}

\item {\revdone The sample points $\left\{ \alpha _j \right\} _{j=1}^N$ of the gPC method are predetermined by the quadrature rule. Therefore, if one wants to {\em adaptively} improve the accuracy, one cannot use the samples from the "old" low-resolution grid in the "new" high-accuracy approximation.}

\end{enumerate}
\noindent
Similarly to density estimation, the error of the moment estimation of Algorithm \ref{alg:spline_u} comes from both the numerical integration (line 5) and interpolation (line 2). The trapezoidal rule integration error can be made sufficiently small by increasing the number of samples $M$ at line 3, at a negligible computational cost. Moreover, if $c(\alpha)\equiv 1$, the integration over $f_N ^{\rm spline}$ can be done exactly.\footnote{When $f$ is sufficiently smooth and $\alpha$ is uniformly distributed, one can approximate $\mathbb{E}_{\alpha} [f] \approx \mathbb{E}_{\alpha} [f_N ^{\rm spline}]$, and compute the right-hand side explicitly (in MATLAB, this can be done using the {\em fnint} command).} Hence, the moment estimation error of Algorithm \ref{alg:spline_u} is determined by the accuracy of the spline interpolation:

\begin{corollary}\label{corr:spl_mom}
Let $f\in C^{4} \left( [\alpha_{\min}, \alpha_{\max} ]\right)$, let $f_N ^{\rm spline}$ be the natural, "not-a-knot", or clamped cubic spline interpolant of $f$, and let $\alpha$ be distributed by $c(\alpha)d\alpha$, where $c(\alpha)\geq 0$, and $\int\limits_{\alpha _{\min}} ^{\alpha _{\max}} c(\alpha) \, d\alpha  = 1$. Then
$$\left| \mathbb{E}_{\alpha} [f] - \mathbb{E}_{\alpha} [f_N ^{\rm spline}] \right| \leq C_{\rm spl} ^{(0)} \|f\|_{\infty} h_{\max}^{4} \, ,$$
where $C_{\rm spl}^{(0)}$ and $h_{\max}$ are defined in Theorem {\rm \ref{thm:hall_spl}}.
\end{corollary}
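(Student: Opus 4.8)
The plan is to obtain this bound as an immediate consequence of the $L^{\infty}$ spline-interpolation estimate of Theorem~\ref{thm:hall_spl} with $m=3$ and $j=0$. The key observation is that the moment-estimation error is simply a $c$-weighted average of the pointwise interpolation error $f-f_N^{\rm spline}$, and is therefore controlled by the sup-norm of that error times the total mass of $c$, which equals $1$. Note that, unlike Theorem~\ref{thrm:cub_pdf}, no lower bound on $|f'|$ is needed here.

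Concretely, I would first use linearity of the expectation to write
$$\mathbb{E}_{\alpha}[f] - \mathbb{E}_{\alpha}[f_N^{\rm spline}] = \int_{\alpha_{\min}}^{\alpha_{\max}} \big( f(\alpha) - f_N^{\rm spline}(\alpha) \big)\, c(\alpha)\, d\alpha \, .$$
Taking absolute values, moving them inside the integral, and using $c(\alpha)\ge 0$ gives
$$\left| \mathbb{E}_{\alpha}[f] - \mathbb{E}_{\alpha}[f_N^{\rm spline}] \right| \le \int_{\alpha_{\min}}^{\alpha_{\max}} \big| f(\alpha) - f_N^{\rm spline}(\alpha) \big|\, c(\alpha)\, d\alpha \le \big\| f - f_N^{\rm spline} \big\|_{L^{\infty}[\alpha_{\min},\alpha_{\max}]} \int_{\alpha_{\min}}^{\alpha_{\max}} c(\alpha)\, d\alpha \, .$$
Since $\int_{\alpha_{\min}}^{\alpha_{\max}} c(\alpha)\, d\alpha = 1$ by hypothesis, the right-hand side reduces to $\big\| f - f_N^{\rm spline}\big\|_{\infty}$. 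Finally, applying Theorem~\ref{thm:hall_spl} with $j=0$ and $m=3$ bounds this quantity by $C_{\rm spl}^{(0,3)} \big\| f^{(4)}\big\|_{\infty} h_{\max}^{4}$; setting $C_{\rm spl}^{(0)} := C_{\rm spl}^{(0,3)}$ then yields the claimed estimate. The $C^{4}$ regularity assumption is exactly what guarantees that $\|f^{(4)}\|_{\infty}$ is finite and that Theorem~\ref{thm:hall_spl} applies.

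I do not expect a genuine obstacle here: the statement is essentially a one-line corollary once Theorem~\ref{thm:hall_spl} is available, the only point of care being that the constant on the right-hand side carries $\|f^{(4)}\|_{\infty}$ (this is what Theorem~\ref{thm:hall_spl} provides), so the $\|f\|_{\infty}$ written in the statement should be read accordingly. The same two-line argument, applied to $|f|^{2}$ in place of $f$ and combined with the local Lipschitz continuity of $t\mapsto\sqrt{t}$ away from $0$, gives analogous $O(h_{\max}^{4})$ bounds for the variance and the standard deviation in \eqref{eq:moments_def}, should one wish to record them.
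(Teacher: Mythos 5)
Your proof is correct and is essentially identical to the paper's: both bound the moment error by $\|f-f_N^{\rm spline}\|_{\infty}\int c\,d\alpha = \|f-f_N^{\rm spline}\|_{\infty}$ and then invoke Theorem~\ref{thm:hall_spl} with $j=0$. You are also right that the statement's $\|f\|_{\infty}$ is a typo for $\|f^{(4)}\|_{\infty}$, which is what the paper's own proof actually produces.
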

\begin{proof}
By Theorem \ref{thm:hall_spl}, $\large\|f - f_N ^{\rm spline} \large\| _{\infty} \leq C_{\rm spl} ^{(0)} \large\|f^{(4)} \large\|_{\infty} h_{\max}^4$. Hence, 
\begin{multline*} 
\Big| \int\limits_{\alpha_{\min}} ^{\alpha _{\max}} \big( f(\alpha) - f_N ^{\rm spline}(\alpha) \big) c(\alpha)\, d \alpha \Big|\leq \\
  \big\|\big( f - f_N ^{\rm spline}\big) \big\|_{\infty} \int\limits_{\alpha_{\min}}^{\alpha_{\max}} c(\alpha) \, d\alpha \leq \left\|f - f_N ^{\rm spline} \right\| _{\infty} \cdot 1  \leq C_{\rm spl} ^{(0)} \|f^{(4)} \|_{\infty} h_{\max}^4 \, . 
\end{multline*}
\end{proof}
Typically, $C_{\rm spl} ^{(0)} <1$. For example, for the natural and "not-a-knot" cubic spline, $C_{\rm spl} ^{(0)}$ is equal to $\frac{5}{384}$ and $\frac{1}{25}$, respectively \cite{hall1976bounds, beatson1992cubic}. On a uniform grid, $h_j =\frac{\alpha_{\max} - \alpha_{\min}}{N-1}$ for $1<j\leq N$, and so  $ \mathbb{E}_{\alpha} \large[ f \large]- \mathbb{E}_{\alpha} \large[ f^{\rm spline} _N  \large]=~O(N^{-4})$.

As $N\to \infty$, the {\em polynomial} convergence rate of the spline approximation~(Corollary \ref{corr:spl_mom}) is outperformed by gPC's {\em spectral} convergence rate (Corollary \ref{corr:moment_gpc}). Quite often, however, the spline approximation is more accurate for {\em moderate} $N$ values. To see that, note that by \eqref{eq:orthogonal_polynomials_def}, \eqref{eq:coefs_with_quad}, and \eqref{eq:col_gpc}, $\mathbb{E}_{\alpha} \left[ f_N ^{\rm gpc} \right] =\sum\limits_{j=1}^{N} f(\alpha_{j} ) w_j $, which is the Gauss quadrature rule. Hence, if $f\in C^{2N}$, then 
$$  \mathbb{E}_{\alpha} \left[ f\right]- \mathbb{E}_{\alpha} \left[ f_N ^{\rm gpc} \right] = \frac{f^{(2N)} (\xi)}{k_N ^2 (2N)!} \, , \qquad \xi \in (\alpha_{\min} , \alpha_{\max}) \, ,$$
where $k_N$ is the leading coefficient of $p_N (\alpha)$~\cite{davis1967integration}. If for small $N$, $\|f^{(2N)}\|_{\infty}$ increases faster than $k_N ^2(2N)!$, the error initially {\em increases} with~$N$. In these cases, the exponential convergence is only achieved at large~$N$.\footnote{For example, if the numerator grows as $K^{2N}$, the error only decays for $N > K$.} Even when gPC does converge exponentially, i.e.,~$ \left| \mathbb{E}_{\alpha} \left[ f\right]- \mathbb{E}_{\alpha} \left[ f_N ^{\rm gpc} \right] \right| \leq Ke^{-\gamma N}$, if $\gamma$ is small, the error of the spline approximation may be smaller for moderate values of $N$, see e.g., Fig.\ \ref{fig:tanh9_func}(c). To conclude, the accuracy of spline-based moment approximation is {\em guaranteed} also with few samples, and not only asymptotically as $N\to \infty$.

\section{Multi-dimensional noises}\label{sec:multid_theory}

To generalize the spline-based density-estimation approach (Algorithm \ref{alg:spline_u}) to the case where $\pmb{\alpha} \in \Omega =[0,1]^d$, we use tensor-product splines, which are defined in the following way. Let $m\geq 1$, let $f(\pmb{\alpha})\in C^{m+1}(\Omega)$, let $\Lambda$ be the one-dimensional grid $0= \alpha _1 < \cdots < \alpha _n =~1$, and let $\Lambda ^d$ be the respective $d$-dimensional tensor-product grid. An $m$-th degree {\em tensor-product spline interpolant} of $f$ is a function $s(\pmb{\alpha})\in C^{m-1} (\Omega)$ that interpolates $f$ on $\Lambda ^d$ and reduces to a {\em one-dimensional} $m$-th degree spline on every line on $\Lambda ^d$, \footnote{i.e., when $d-1$ coordinates of~$\pmb{\alpha}$~are fixed in $\Lambda$.}\textsuperscript{,}\footnote{$s(\pmb{\alpha})$ is unique when endowed with sufficiently many boundary conditions, see the discussion on the one-dimensional case in Sec.\ \ref{sec:spline}. Theorem \ref{thm:tensor_spline} holds for many possible choices of boundary conditions, including the not-a-knot conditions which we have also used in our simulations.} see \cite{schultz1969spline} for a more precise definition. The multidimensional extension of Algorithm \ref{alg:spline_u} for density estimation is

\begin{algorithm}
\caption{Multidimensional spline-based density estimation}\label{alg:spline_multid}
\begin{flushleft}
Let $\Lambda ^d = \left\{ \alpha _1 , \ldots ,\alpha _N\right\}^D$ be a tensor-product uniform grid on $[0,1 ]^d$.
\end{flushleft}
\begin{algorithmic}[1]
\STATE{For each $\pmb{\alpha}_{\bf j} \in \Lambda ^d$, solve \eqref{eq:general_settings} with $\alpha = \pmb{\alpha}_{\bf j}$ to obtain  $u \left(t, {\bf x}  ; \pmb{\alpha} _{\bf j } \right)$.}
\STATE{Approximate $u (t, x;\pmb{\alpha} ) \approx u_N^{\rm spline} (t, {\bf x};\pmb{\alpha} )$, where $u_N^{\rm spline}$ is a tensor-product spline interpolant of order $m$ on $\Lambda ^d$. }

\STATE{Approximate $f(\tilde{\pmb{\alpha}} _m) \approx f(u _N ^{\rm spline}(\cdot, \tilde{\pmb{\alpha} } _m ) )$ on a sample of $M\gg N$ points $\left\{ \tilde{\pmb{\alpha}}_m \right\} _{m=1}^M$ which are i.i.d. according to $c(\pmb{\alpha})$.}
\STATE{Use the histogram method \eqref{eq:hist} with $\left\{ f(  \tilde{\pmb{\alpha} } _m ) \right\}_{m=1}^M$ to approximate the {\rm PDF} of $f$.}

\end{algorithmic}
\end{algorithm}

As in the one-dimensional Algorithm \ref{alg:spline_u}, the analysis of the density-estimation error in Algorithm \ref{alg:spline_multid} is based on two components:
\begin{enumerate}
\item A pointwise error bound for tensor-product spline interpolants, due to Schultz:
\begin{theorem}[\cite{rice1978spline, schultz1969spline}]\label{thm:tensor_spline}
Let $\Omega = [0,1]^d$, $f\in C^{m+1}(\Omega)$, and let $s(\pmb{\alpha})$ be its $m$-th degree tensor-product spline interpolant. Then for any $\pmb{\alpha} \in \Omega$,
\begin{equation}\label{eq:multid_spl_bd}
|D^{j} (f-s)\big|<C_m h^{m+1-j} \, , \qquad j=0,1, \ldots m-1 \, ,
\end{equation}
where $D^j$ is any $j$-th order derivative,\footnote{More explicitly, $D^j=\prod_{k=1}^{d}(\partial _{\alpha _{k}})^{\ell _k}$ where $\ell _1 + \cdots \ell _d = j$, and each $\ell _k$ is a non-negative integer.} $C_m = C_m (\| D^{m+1} f \| _{\infty})$ depends only on the $L^{\infty}$ norms of the $m+1$ order derivatives of $f$, and $h = \max _{1\leq j <n} |\alpha_{j+1} - \alpha_j |$.
\end{theorem}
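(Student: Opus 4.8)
The plan is to reduce the multivariate estimate to the one-dimensional bound of Theorem~\ref{thm:hall_spl} by a telescoping argument in the coordinate directions. Let $S_k$ denote the operator that performs one-dimensional $m$-th degree spline interpolation in the variable $\alpha_k$ (with the prescribed boundary conditions), the remaining coordinates held fixed, and set $E_k := I - S_k$. Since $S_k$ acts on the data through fixed cardinal spline functions that depend only on the grid $\Lambda$, the operators $S_1,\dots,S_d$ commute with one another, each $S_k$ commutes with $\partial_{\alpha_i}$ for $i\neq k$, and the tensor-product spline interpolant is $s = S_1 S_2 \cdots S_d f$. Expanding $I - S_1\cdots S_d = I - \prod_{k=1}^{d}(I-E_k)$ gives
\begin{equation*}
f - s \;=\; \sum_{\emptyset\neq A\subseteq\{1,\dots,d\}}(-1)^{|A|+1}\Bigl(\prod_{k\in A}E_k\Bigr) f ,
\end{equation*}
so it suffices to bound $\bigl\|D^{j}\bigl(\prod_{k\in A}E_k\bigr)f\bigr\|_{\infty}$ for each of the $2^{d}-1$ nonempty subsets $A$.

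Fix $A = \{k_1 < \dots < k_r\}$ and a derivative $D^{j} = \prod_{i=1}^{d}\partial_{\alpha_i}^{\ell_i}$ with $\sum_i \ell_i = j \leq m-1$. Using the commutation relations, push every $\partial_{\alpha_i}$ with $i\notin A$ onto $f$, and move each $\partial_{\alpha_k}^{\ell_k}$ with $k\in A$ so that it sits immediately to the left of the matching $E_k$; this exhibits $D^{j}\bigl(\prod_{k\in A}E_k\bigr)f$ as the nested composition
\begin{equation*}
\bigl(\partial_{\alpha_{k_1}}^{\ell_{k_1}}E_{k_1}\bigr)\cdots\bigl(\partial_{\alpha_{k_r}}^{\ell_{k_r}}E_{k_r}\bigr)\,v ,
\qquad v := \Bigl(\prod_{i\notin A}\partial_{\alpha_i}^{\ell_i}\Bigr) f .
\end{equation*}
One then estimates from the inside out. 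As a function of $\alpha_{k_r}$, the function $v$ is of class $C^{\rho+1}$ with $\rho + 1 = m+1 - \sum_{i\notin A}\ell_i$, and the reduced-smoothness version of Theorem~\ref{thm:hall_spl} (proved exactly as the stated one, with $f^{(m+1)}$ replaced by $f^{(\rho+1)}$) gives $\bigl\|\partial_{\alpha_{k_r}}^{\ell_{k_r}}E_{k_r}v\bigr\|_{\infty}\leq C\,h^{\rho+1-\ell_{k_r}}\,\|D^{m+1}f\|_{\infty}$. Feeding this estimate, together with the analogous estimates for the derivatives of the intermediate pieces in the remaining variables, into the successive factors, the exponents of $h$ accumulate additively; since $\sum_i\ell_i = j$ and $h\leq\mathrm{diam}(\Omega)=1$, after the outermost factor one obtains $\bigl\|D^{j}\bigl(\prod_{k\in A}E_k\bigr)f\bigr\|_{\infty}\leq C\,h^{m+1-j}\,\|D^{m+1}f\|_{\infty}$. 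Summing over $A$ yields \eqref{eq:multid_spl_bd} with $C_m$ depending only on $m$, $j$, $d$, $\Omega$, and the $L^\infty$ norms of the order-$(m+1)$ derivatives of $f$.

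The main obstacle is the bookkeeping of smoothness at the intermediate stages: each application of $\partial_{\alpha_k}^{\ell_k}E_k$ reduces the number of derivatives still available in the remaining directions, so one must track these "smoothness budgets" and invoke the reduced-smoothness one-dimensional estimate with the correct order every time (the budgets stay nonnegative precisely because $j \leq m-1$). Closing the argument also requires the mesh-uniform stability of one-dimensional spline interpolation, $\bigl\|\partial_{\alpha_k}^{\ell}(S_k g)\bigr\|_{\infty}\leq C\,\bigl\|\partial_{\alpha_k}^{\ell}g\bigr\|_{\infty}$ for $g\in C^{\ell}$ and $0\leq\ell\leq m-1$ with $C$ independent of the (uniform) grid, which guarantees that the auxiliary factors never generate negative powers of $h$; this stability is classical and is where the admissible boundary conditions enter. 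It is also what rules out the tempting shortcut of comparing $f$ and $s$ to a single global tensor polynomial of coordinate degree $\leq m$: $s=S_1\cdots S_d f$ reproduces such polynomials and is stable, but the best polynomial approximation error over the fixed domain $\Omega$ does not see $h$, so the $h^{m+1-j}$ rate genuinely comes from the piecewise structure already encoded in the one-dimensional estimate. Finally, since the proof uses only these two properties of the one-dimensional interpolant, the same conclusion holds — exactly in the spirit of Corollary~\ref{cor:1d_gen} — for any surrogate model built as a tensor product of one-dimensional approximants obeying Theorem~\ref{thm:hall_spl}-type bounds.
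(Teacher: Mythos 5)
The paper does not prove this theorem at all --- it is imported verbatim from Rice and Schultz, so there is no internal proof to compare against; what you have written is the standard Boolean-sum/telescoping derivation of the multivariate bound from the univariate one, which is essentially how the cited references obtain it. Your outline is correct: the identity $f-s=\sum_{\emptyset\neq A}(-1)^{|A|+1}\bigl(\prod_{k\in A}E_k\bigr)f$, the commutation of $S_k$ with $\partial_{\alpha_i}$ for $i\neq k$, and the inside-out estimation all go through, and your observation that the smoothness budget closes exactly because $j\le m-1$ is the right accounting (e.g.\ for $|A|=2$ one gets $h^{1}\cdot h^{m-j}=h^{m+1-j}$ with the budget exhausted at $j=m-1$). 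Two remarks. First, the argument rests on two classical but nontrivial univariate inputs that you name without proving: the reduced-smoothness version of Theorem~\ref{thm:hall_spl} (error $O(h^{\rho+1-\ell})$ for degree-$m$ interpolation of a $C^{\rho+1}$ function with $\rho<m$, for the natural/clamped/not-a-knot conditions), and the mesh-uniform derivative stability $\|\partial_{\alpha_k}^{\ell}S_k g\|_\infty\le C\|\partial_{\alpha_k}^{\ell}g\|_\infty$; the first is genuinely needed and should carry a citation, while the second can actually be avoided entirely by applying the error estimate only to the innermost factor and writing $\|\partial^{\ell}E_k w\|\le\|\partial^{\ell}w\|+\|\partial^{\ell}(w-S_kw)\|$ for the outer ones, since every intermediate function retains at least $\ell+1$ derivatives in the relevant variable within the budget. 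Second, you assert $s=S_1\cdots S_d f$ for the paper's tensor-product interpolant without comment; this is standard but is exactly where the choice of boundary conditions on each grid line enters, so it deserves a sentence. Neither point is a gap in the logic, only in the level of detail.
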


\item A multi-dimensional generalization of Lemma~\ref{lem:pdf}.\footnote{When $\Omega \subset \mathbb{R}$ is a one-dimensional interval, Lemma \ref{lem:ddim_pdf} reduces to Lemma \ref{lem:pdf}. Indeed, since $|f'| \neq 0$ on $\bar{\Omega}$ then $f$ is piece-wise monotonic, and so $f^{-1}(y)$ consists of a finite number of points. In addition, the surface element $d\sigma$ is a point-mass distribution. Hence, \eqref{eq:multid_pdf}~reduces to \eqref{eq:pdf_inve}.}

\begin{lemma}
\label{lem:ddim_pdf}
Let $\Omega \subset \mathbb{R}^d$ be a Jordan set, denote by $|\cdot |$ the Euclidean norm in $\mathbb{R}^d$, let~$f$~be piecewise-differentiable with $|\nabla f| \neq 0$ on $\bar{\Omega}$, let $\pmb{\alpha}$ be an absolutely-continuous random variable in $\Omega$, i.e., $d\mu (\pmb{\alpha}) = c(\pmb{\alpha})d\pmb{\alpha}$ for some non-negative $c\in  L^1 (\Omega)$, and denote the PDF associated with $f(\pmb{ \alpha})$ by $p_f$. Then 
\begin{equation}\label{eq:multid_pdf}
p_f(y) = \frac{1}{\mu(\Omega)}\int\limits_{f^{-1} (y)} \frac{c(\pmb{\alpha})}{| \nabla f (\pmb{\alpha})|} \, d\sigma \, ,
\end{equation}
where $d\sigma$ is a $(d-1)$ dimensional surface element on~$f^{-1}(y)$.
\end{lemma}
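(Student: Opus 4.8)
\textbf{Proof proposal for Lemma~\ref{lem:ddim_pdf}.}

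The plan is to compute the cumulative distribution function $P(y) = \mu(\{\pmb{\alpha} : f(\pmb{\alpha}) < y\})/\mu(\Omega)$ and then differentiate in $y$ using the coarea formula. First I would write, for any $y$ in the range of $f$,
\begin{equation*}
\mu(\Omega)\, P(y) = \int_{\{f(\pmb{\alpha}) < y\}} c(\pmb{\alpha})\, d\pmb{\alpha} = \int_{-\infty}^{y} \left( \int_{f^{-1}(s)} \frac{c(\pmb{\alpha})}{|\nabla f(\pmb{\alpha})|}\, d\sigma \right) ds \, ,
\end{equation*}
where the second equality is precisely the coarea formula (a change of variables to the level sets of $f$), which is applicable because $|\nabla f| \neq 0$ on $\bar\Omega$, so the level sets $f^{-1}(s)$ are genuine $(d-1)$-dimensional surfaces and the Jacobian factor $|\nabla f|$ is bounded away from zero. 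Differentiating both sides with respect to $y$, and invoking the fundamental theorem of calculus together with the continuity in $s$ of the inner integral, gives
\begin{equation*}
p_f(y) = \frac{dP}{dy} = \frac{1}{\mu(\Omega)} \int_{f^{-1}(y)} \frac{c(\pmb{\alpha})}{|\nabla f(\pmb{\alpha})|}\, d\sigma \, ,
\end{equation*}
which is \eqref{eq:multid_pdf}.

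The main steps, in order: (i) reduce to each smooth piece of $f$ separately, using the hypothesis that $f$ is only piecewise differentiable — on each piece $f\in C^1$ and $|\nabla f|$ is bounded below, and the set of non-smoothness has measure zero so contributes nothing to either side; (ii) apply the coarea/Federer formula on each smooth piece to rewrite the volume integral defining $P(y)$ as an iterated integral over level sets; (iii) verify that the inner integral $s \mapsto \int_{f^{-1}(s)} c/|\nabla f|\, d\sigma$ is continuous in $s$, so that differentiation of the outer integral is legitimate and yields the integrand evaluated at $s=y$; (iv) sum the contributions of the finitely many smooth pieces to recover the stated formula. I would also remark, as the paper's footnote does, that in dimension $d=1$ the surface measure $d\sigma$ on the finite set $f^{-1}(y)$ is counting measure, so the integral collapses to the sum $\sum_{f(\alpha_j)=y} c(\alpha_j)/|f'(\alpha_j)|$ of Lemma~\ref{lem:pdf}.

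The main obstacle I expect is step (iii): establishing enough regularity of the level-set integral in $s$ to justify differentiating under the outer integral sign. This requires that the surfaces $f^{-1}(s)$ vary nicely with $s$ — which follows from the implicit function theorem since $\nabla f \neq 0$, but care is needed near $\partial\Omega$, where a level set may meet the boundary of the Jordan domain, and at the junctions between the smooth pieces of $f$. One clean way around this is to appeal directly to a measure-theoretic form of the coarea formula (e.g.\ for Lipschitz functions, as in Evans--Gariepy or Federer), which gives the iterated-integral identity for $P(y)$ for \emph{almost every} $y$ without any pointwise continuity hypothesis, and then note that under the stated smoothness the identity in fact holds for every $y$ in the interior of the range; the absolute continuity of $\mu$ and the $L^1$ hypothesis on $c$ ensure all the integrals are finite. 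The remaining details — the behavior of $d\sigma$ near $\partial\Omega$ and the bookkeeping across the finitely many smooth pieces — are routine once this machinery is in place.
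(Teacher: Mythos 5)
Your proposal follows essentially the same route as the paper: write the CDF as $\frac{1}{\mu(\Omega)}\int_{\{f<y\}} c\,d\pmb{\alpha}$, apply the coarea formula (the paper cites the Evans--Gariepy version, with $A=D(y)$, $u=f$, $g=c/|\nabla f|$, justified by exactly the points you raise — $D(y)$ Jordan, $f$ Lipschitz on the compact $\bar\Omega$, and $c/|\nabla f|\in L^1$ since $|\nabla f|$ is bounded below) to obtain the iterated integral, and then differentiate in $y$ via the Leibniz rule. The argument is correct and matches the paper's proof in both structure and the supporting verifications.
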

\begin{proof}
See Appendix \ref{ap:ddim_pdf_pf}.
\end{proof}

\end{enumerate}

The generalization of Theorem \ref{thrm:cub_pdf} to the case of multidimensional random parameter is as follows:
\begin{theorem}\label{thm:multid}
Let $\Omega = [0,1]^d$, let $m\geq 1$, let $f\in C^{m+1}(\Omega)$, let $s$ be the $m$-degree tensor-product spline interpolant of $f$, let $\pmb{\alpha}$ be uniformly distributed in $\Omega$, and let $p_f$ and $p_s$ be the PDFs of $f$ and~$s$,~respectively. If~$\kappa _{\rm f} :=~\min _{\Omega} |\nabla f|  >0$, then for sufficiently small $h$ {\revdone and for any $1\leq q<\infty$}, 
\begin{equation}\label{eq:bd_pdf_multid}
\|p_f - p_s\|_q \leq Kh^m  \, ,
\end{equation}
for some constant $K>0$, where $h$ is defined in Theorem \ref{thm:tensor_spline}.
\end{theorem}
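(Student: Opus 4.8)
\textbf{Proof proposal for Theorem \ref{thm:multid}.}

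The plan is to combine the multidimensional surrogate-error bound (Theorem \ref{thm:tensor_spline}) with the co-area–type formula of Lemma \ref{lem:ddim_pdf}, mirroring the structure of the one-dimensional argument (Theorem \ref{thrm:cub_pdf}), but handling the new geometric subtleties created by the fact that the level set $f^{-1}(y)$ is now a $(d-1)$-dimensional surface rather than a finite point set. First I would fix $y$ in the range of $f$ and write, using Lemma \ref{lem:ddim_pdf} with $c\equiv 1$ and $\mu(\Omega)=1$,
\begin{equation*}
p_f(y) - p_s(y) = \int_{f^{-1}(y)} \frac{d\sigma}{|\nabla f|} - \int_{s^{-1}(y)} \frac{d\sigma}{|\nabla s|} \, .
\end{equation*}
The key preliminary observation is that for $h$ small enough, Theorem \ref{thm:tensor_spline} with $j=1$ gives $\|\nabla f - \nabla s\|_\infty \leq C_m h^m$, hence $|\nabla s| \geq \kappa_f - C_m h^m \geq \kappa_f/2 > 0$ on all of $\Omega$; so $s$ has no spurious critical points, both level sets are genuine $(d-1)$-manifolds, and both integrands are bounded by $2/\kappa_f$. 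This is the step that makes everything else work, and it is exactly where the gradient-approximation property of splines (as opposed to mere $L^\infty$ closeness) is essential — the same point emphasized by Lemma \ref{lem:counter_pdf}.

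The main work is to bound the difference of the two surface integrals. I would do this by transporting the integral over $s^{-1}(y)$ back onto $f^{-1}(y)$ via the normal flow of $f$: since $\|f-s\|_\infty \leq C_m h^m$, every point of $s^{-1}(y)$ lies within a tube of radius $O(h^m/\kappa_f)$ around $f^{-1}(y)$, and the nearest-point projection $\pi$ onto $f^{-1}(y)$ is a diffeomorphism on this tube with Jacobian $1 + O(h^m)$ (the implicit function theorem plus the curvature bounds on $f^{-1}(y)$, which are uniform because $f\in C^{m+1}$ and $\kappa_f>0$). Pushing forward, I get
\begin{equation*}
\left| \int_{s^{-1}(y)} \frac{d\sigma}{|\nabla s|} - \int_{f^{-1}(y)} \frac{d\sigma}{|\nabla f|} \right| \leq \int_{f^{-1}(y)} \left| \frac{J(\pi^{-1})}{|\nabla s \circ \pi^{-1}|} - \frac{1}{|\nabla f|} \right| d\sigma \, ,
\end{equation*}
and the integrand is $O(h^m)$ pointwise: the Jacobian contributes $O(h^m)$, and $\big||\nabla s\circ\pi^{-1}| - |\nabla f|\big| \leq \|\nabla f-\nabla s\|_\infty + \mathrm{Lip}(\nabla f)\cdot O(h^m/\kappa_f) = O(h^m)$, divided by gradients bounded below by $\kappa_f/2$. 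Finally, the surface measure $\sigma(f^{-1}(y))$ is uniformly bounded over $y$ (again by compactness and $\kappa_f>0$, e.g. via the co-area formula $\int_{\mathbb R}\sigma(f^{-1}(y))\,dy = \int_\Omega |\nabla f| \leq \|\nabla f\|_\infty$ together with continuity in $y$), so $\|p_f-p_s\|_\infty \leq K h^m$, and since both PDFs are supported on the bounded interval $[\min f,\max f]$, the $L^q$ bound \eqref{eq:bd_pdf_multid} follows for every $1\leq q<\infty$ from $\|p_f-p_s\|_q \leq |\max f - \min f|^{1/q}\,\|p_f-p_s\|_\infty$.

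The hard part will be making the tubular-neighborhood/normal-flow argument rigorous with constants that are genuinely uniform in $y$ — in particular controlling the reach of the family of level surfaces $\{f^{-1}(y)\}$ and the behavior near $\partial\Omega$, where $f^{-1}(y)$ meets the boundary of the cube and is a manifold with boundary rather than a closed manifold. One clean way to sidestep the boundary issue is to work with the cumulative distribution functions instead: $P_f(y)-P_s(y) = |\{f<y\}| - |\{s<y\}| = O(h^m)$ directly from $\|f-s\|_\infty \le C_m h^m$ and the bound $|\nabla f|\ge\kappa_f$ (which forces $|\{y-\delta < f < y+\delta\}| \le C\delta$), and then recover the density bound by differentiating — but that still requires a Lipschitz-type estimate on $P_f'-P_s' = p_f - p_s$, so one is led back to essentially the same level-set comparison. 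I would therefore present the co-area argument as the main line, and relegate the uniform-reach and boundary-regularity estimates to lemmas whose proofs use only $f\in C^{m+1}(\bar\Omega)$ and $\kappa_f>0$.
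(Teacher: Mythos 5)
Your proposal is correct in outline and reaches the theorem by a genuinely different transport map than the paper's. The paper never invokes a tubular neighborhood or nearest-point projection: instead it covers $s^{-1}(y)$ by the sets $A_j(y)$ on which $|\partial_{\alpha_j}f|>\kappa_{\rm f}/d$, moves each piece onto $f^{-1}(y)$ by a shift $\delta(\pmb{\alpha})\hat e_j$ of size $O(h^{m+1})$ along the \emph{$j$-th coordinate axis} (Lemma \ref{lem:phij2}), patches the pieces with a disjointification and an inclusion--exclusion argument (Lemma \ref{lem:phijBj_almost_cover}, which absorbs all overlap and boundary mismatch into $O(h^m)$ surface-measure errors), and then writes each surface integral as a graph integral over a coordinate hyperplane via the implicit function theorem, where the factor $\sqrt{1+|\nabla S|^2}$ cancels $|\nabla s|$ and leaves the elementary comparison $|1/|\partial_{\alpha_d}f|-1/|\partial_{\alpha_d}s||=O(h^m)$. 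Your normal-flow version buys a single global map and no partition, but at the cost of needing exactly the facts you defer to lemmas: a reach bound for the family $\{f^{-1}(y)\}$ uniform in $y$, the bijectivity of $\pi|_{s^{-1}(y)}$ (which requires the fiberwise monotonicity argument that $\partial_\nu s\ge\kappa_{\rm f}/2$ along each normal fiber, the analogue of the paper's item 2 of Lemma \ref{lem:phij2}), the Jacobian estimate $1+O(h^m)$, and the $O(h^m)$ control of the boundary collar where the two level sets have different footprints on $\partial\Omega$. Those deferred items are where essentially all of the paper's work lies --- its Lemmas \ref{lem:phij2} and \ref{lem:phijBj_almost_cover} are precisely the coordinate-direction analogues --- so your sketch is a sound plan rather than a complete proof; in particular the boundary mismatch should not be waved at, since it is the one place where $\pi|_{s^{-1}(y)}$ fails to be onto $f^{-1}(y)$ and must be shown to cost only $O(h^m)$ in surface measure (the paper's \eqref{eq:phij_almost_surj}). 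Two minor points: the $j=0$ bound of Theorem \ref{thm:tensor_spline} actually gives a tube radius $O(h^{m+1})$, not $O(h^m)$ (harmless, but worth using, since it is what makes the Jacobian and displacement errors subdominant); and your co-area bound on $\int_{\mathbb R}\sigma(f^{-1}(y))\,dy$ does not by itself give the uniform-in-$y$ bound on $\sigma(f^{-1}(y))$ you need --- the cheap route is the paper's: each graph domain $G\subseteq[0,1]^{d-1}$ has measure at most one, so $\sigma(f^{-1}(y))\le d\cdot\|\nabla f\|_\infty\cdot 2d/\kappa_{\rm f}$.
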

\begin{proof}
See Appendix \ref{app:multid_pdf_pf}.
\end{proof}

Theorem \ref{thm:multid} can be extended to any approximation $\tilde{f}$ of $f$ and to any bounded domain $\Omega \subseteq \mathbb{R}^d$, provided that the bound \eqref{eq:multid_spl_bd} holds for $j=0$ and $j=1$.

The total number of sample points in the special case where $\Lambda$ is the uniform one-dimensional grid on $[0,1]$ is $N=n^d \sim  h^{-d}$. Therefore,
\begin{corollary}\label{corr:pdf_multid_N}
Let $\Lambda$ be the uniform grid on $[0,1]$. Then under the conditions of Theorem \ref{thm:multid}, then for sufficiently large $N$, $$\|p_f - p_s\|_1 \leq  KN^{-\frac{m}{d}}  \, ,$$ for some constant $K>0$.
\end{corollary}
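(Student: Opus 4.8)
The plan is to combine the $q=1$ case of Theorem~\ref{thm:multid} with the elementary relation between the grid spacing $h$ and the total sample size $N$ on a uniform tensor-product grid; this corollary is essentially a bookkeeping consequence of the theorem. First I would apply Theorem~\ref{thm:multid} with $q=1$, which is admissible since $1\leq q<\infty$. This produces a constant $K_0>0$ and a threshold $h_0>0$ such that $\|p_f-p_s\|_1\leq K_0 h^m$ whenever $h\leq h_0$, under the standing hypotheses $f\in C^{m+1}(\Omega)$, $\Omega=[0,1]^d$, $\pmb{\alpha}$ uniform, and $\kappa_f=\min_{\Omega}|\nabla f|>0$, all of which are inherited from the statement of this corollary.

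Next I would change variables from $h$ to $N$. Since $\Lambda$ is the uniform grid $0=\alpha_1<\cdots<\alpha_n=1$, the spacing is $h=1/(n-1)$, and the full tensor-product grid $\Lambda^d$ has $N=n^d$ points, so $n=N^{1/d}$ and $h=1/(N^{1/d}-1)$. For $N$ large enough that $N^{1/d}\geq 2$, one has $N^{1/d}-1\geq \tfrac12 N^{1/d}$, hence $h\leq 2N^{-1/d}$; in particular $h\to 0$ as $N\to\infty$, so the hypothesis $h\leq h_0$ of Theorem~\ref{thm:multid} is satisfied once $N$ exceeds an explicit threshold $N_0=N_0(h_0,d)$.

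Finally I would substitute: for all $N\geq\max\{2^d,N_0\}$,
$$\|p_f-p_s\|_1 \leq K_0 h^m \leq K_0\bigl(2N^{-1/d}\bigr)^m = \bigl(2^m K_0\bigr)\,N^{-m/d},$$
and setting $K:=2^m K_0$ gives the claimed bound. The only point that needs any care is the translation of "sufficiently small $h$" into "sufficiently large $N$" together with the harmless $n-1$ versus $n$ discrepancy in the denominator of $h$; since $m$ and $d$ are fixed, neither affects the stated rate $N^{-m/d}$. I do not anticipate any genuine obstacle here — the work is entirely in Theorem~\ref{thm:multid}, and its proof in Appendix~\ref{app:multid_pdf_pf} is what carries the load.
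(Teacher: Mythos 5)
Your proposal is correct and matches the paper's own (essentially one-line) argument: the paper simply notes that $N=n^d\sim h^{-d}$ for the uniform tensor-product grid and substitutes $h\sim N^{-1/d}$ into the bound $\|p_f-p_s\|_q\leq Kh^m$ of Theorem~\ref{thm:multid}. Your extra care with the $n-1$ versus $n$ discrepancy and the translation of ``sufficiently small $h$'' into ``sufficiently large $N$'' is harmless and does not change the route.
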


As noted in Sec.\ \ref{sec:kde}, the $L^1$ error of the KDE method scales as $N^{-\frac{2}{5}}$ \cite{devroye1985l1}. Therefore, by Corollary \ref{corr:pdf_multid_N}, Algorithm \ref{alg:spline_multid} outperforms KDEs for dimensions $d\leq \frac{5}{2}m$. {\revdone Finally, as in the one-dimensional case (Corollary \ref{cor:1d_gen}), the proof of Theorem \ref{thm:multid} only makes use of two properties of spline interpolation: the $L^{\infty}$ approximation of the function and of its gradient, and the uniform bound on the second derivatives (Theorem~\ref{thm:tensor_spline}). Theorem~\ref{thm:multid} therefore generalizes immediately to density estimation using non-spline surrogate models:
\begin{corollary}\label{cor:gen_multid}
Under the conditions and notations of Theorem \ref{thm:multid}, consider $g_h\in C^1{[0,1]^d}$ with uniformly bounded second derivatives such that $$\|f-g_h\|_{\infty} \, , \, \|\nabla f - \nabla g_h\|_{\infty} \leq Kh^{-\tau} \, ,$$
for some $\tau>0$ independent of $f$ and $K=K(f)>0$. Then $\|p_f-p_{g_h}\|_{q} \leq \tilde{K}h^{-\tau}$ for any $1\leq q < \infty$.
\end{corollary}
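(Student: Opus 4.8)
The plan is to reuse the proof of Theorem~\ref{thm:multid} almost verbatim, after isolating which properties of the tensor-product spline interpolant $s$ it actually invokes. Inspecting that argument (Appendix~\ref{app:multid_pdf_pf}), the interpolant enters only through three quantitative inputs supplied by Theorem~\ref{thm:tensor_spline}: (i) the $L^\infty$ approximation of the function, $\|f-s\|_\infty \le C_m h^{m+1}$; (ii) the $L^\infty$ approximation of the gradient, $\|\nabla f - \nabla s\|_\infty \le C_m h^{m}$; and (iii) the uniform bound on the second derivatives $\|D^2 s\|_\infty$ (the $j=2$ case of Theorem~\ref{thm:tensor_spline}), which keeps the geometry of the level sets $s^{-1}(y)$ under control uniformly in $h$. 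Since the hypotheses of the corollary postulate exactly the analogues of (i)--(iii) for $g_h$, with the rate $h^{-\tau}$ replacing $h^{m}$, the strategy is to substitute $g_h$ for $s$, $h^{-\tau}$ for $h^m$, and the assumed second-derivative bound for the Schultz estimate, and to carry every step of Appendix~\ref{app:multid_pdf_pf} through unchanged.

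Concretely, I would proceed in three steps. First, since $\kappa_f = \min_\Omega|\nabla f| > 0$ and $\|\nabla f - \nabla g_h\|_\infty \to 0$ as $h\to 0$, for all sufficiently small $h$ one has $|\nabla g_h| \ge \kappa_f/2 > 0$ on $\Omega$; hence Lemma~\ref{lem:ddim_pdf} applies to $g_h$, so $p_{g_h}$ is well defined and admits the coarea representation~\eqref{eq:multid_pdf}. Second, I would compare the two representations pointwise in $y$, writing $p_f(y) - p_{g_h}(y)$ as the sum of (a) the contribution of the difference of integrands $|\nabla f|^{-1} - |\nabla g_h|^{-1}$, bounded by $\|\nabla f - \nabla g_h\|_\infty/(\kappa_f \cdot \kappa_f/2)$ and hence $O(h^{-\tau})$, and (b) the contribution of the displacement of the level set $g_h^{-1}(y)$ relative to $f^{-1}(y)$; the latter is $O(\|f - g_h\|_\infty) = O(h^{-\tau})$, because $g_h^{-1}(y)$ is contained in the thin slab where $|f - y| \le \|f - g_h\|_\infty$ (and symmetrically for $f^{-1}(y)$), while the surface measures of the level sets of both $f$ and $g_h$ stay uniformly bounded --- this is where input (iii) and the gradient lower bound enter, exactly as in the original proof. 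Third, for $h$ small the ranges of $f$ and of $g_h$ both lie in a fixed compact interval $I$, $p_f - p_{g_h}$ is supported in $I$, and the pointwise bound $|p_f(y) - p_{g_h}(y)| \le C h^{-\tau}$ is uniform on $I$; therefore $\|p_f - p_{g_h}\|_q \le |I|^{1/q} C h^{-\tau} = \tilde K h^{-\tau}$ for every $1 \le q < \infty$.

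The main obstacle is the bookkeeping in step (b): one must check that the level-set and coarea estimates in Appendix~\ref{app:multid_pdf_pf} never exploit a structural feature special to splines (piecewise polynomiality, a uniformly bounded number of level-set components, and so on) beyond the $C^{1,1}$-type regularity that the hypothesis $\sup_h\|D^2 g_h\|_\infty < \infty$ supplies. Together with the gradient lower bound, that hypothesis makes each $g_h^{-1}(y)$ a $C^1$ hypersurface whose second fundamental form --- equivalently, its total curvature and hence its surface area --- is controlled uniformly in $h$, which is precisely the role that the $j=2$ bound of Theorem~\ref{thm:tensor_spline} plays for $s$. Once this is verified, the estimates transfer line by line under the replacements above, mirroring the one-dimensional passage from Theorem~\ref{thrm:cub_pdf} to Corollary~\ref{cor:1d_gen}.
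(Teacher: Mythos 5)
Your proposal is correct and matches the paper's intent exactly: the paper offers no separate proof of this corollary, only the remark that the argument of Theorem~\ref{thm:multid} uses nothing about splines beyond the $L^{\infty}$ bounds on $f-s$ and $\nabla f - \nabla s$ and the uniform second-derivative bound, so the proof of Appendix~\ref{app:multid_pdf_pf} transfers with $h^{m}$ replaced by the hypothesized rate. Your identification of the three quantitative inputs and of where the second-derivative bound enters (uniform control of the level-set geometry, the $O(\delta)$ terms, and the surface-measure comparisons) is precisely the verification the paper leaves implicit.
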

}

\section{Simulations}\label{sec:example}\label{sec:tanh}

In this section, we compute the density and the moments of the function
\begin{equation}\label{eq:tanh}
f(\alpha) = \tanh (9\alpha) + \frac{\alpha}{2} \, ,\qquad \alpha \in [-1,1] \, \, ,
\end{equation}
which is smooth but has a narrow high-derivative region.\footnote{The $\frac{\alpha}{2}$ term was added so that $\frac{df}{d\alpha}$ is bounded away from zero, in order to prevent singularities in the PDF, see Sec.\ \ref{sec:pdf_tanh}}

\subsection{Interpolation}

\begin{figure}[h!]
\centering
{\includegraphics[scale=0.57]{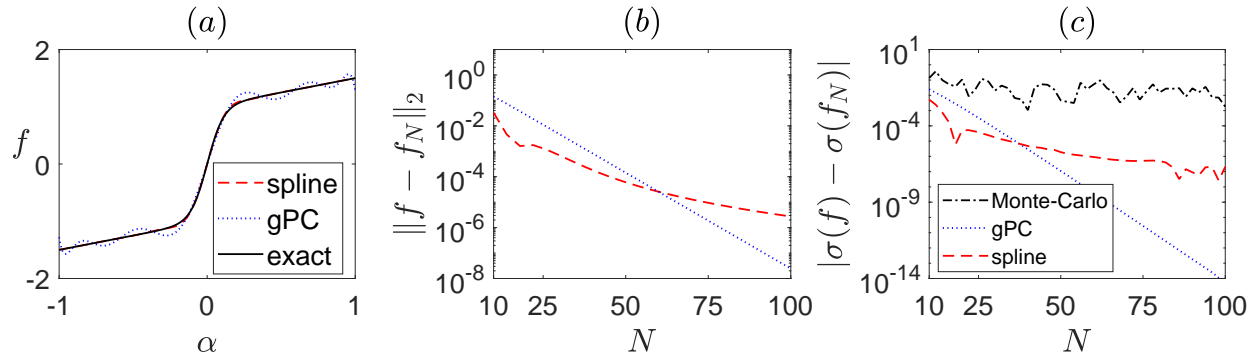}}
\caption{(a) $f(\alpha)$ (solid), see \eqref{eq:tanh}, and its spline interpolant (dashes) are nearly indistinguishable, whereas the gPC interpolant (dots) oscillates "around" $f$. Both interpolants use $N=12$ grid points. (b) $L^2$ error of both interpolants as a function of the number of samples. (c)~Error of the standard deviation when it is approximated using Monte-Carlo method (dash-dot), the gPC-based method (dots) and the spline-based method (dashes).}
\label{fig:tanh9_func}
\end{figure}
With $N=12$ samples, the spline interpolant $f_N ^{\rm spline}$ of \eqref{eq:tanh} is nearly indistinguishable from~$f$,~whereas the gPC interpolant $f_n^{\rm gpc}$ slightly oscilates "around" $f$, see Fig.\ \ref{fig:tanh9_func}(a). Although $f_N^{\rm gpc}$ converges exponentially to $f$ in $L^2$, see Fig.\ \ref{fig:tanh9_func}(b), its $L^2$ approximation error~$\left\|f-f_N\right\|_2 =\big(\int_{-1}^{1} \left| f(\alpha)- f_N (\alpha) \right|^2 \, d\alpha \big)^{\frac{1}{2}} $
with few samples ($10\leq N\leq 40$) is larger than that of the spline interpolant by more than an order of magnitude. With sufficiently many samples~($N>70$), however, the gPC approximation exponential convergence outperforms the spline's polynomial convergence rate. This example shows that with few samples, the occurrence of a "jump" in $f$ hurts the accuracy of the gPC interpolant. Spline interpolation, on the other hand, is less sensitive to the "jump", because it "confines" the approximation error induced by the jump to the jump interval (roughly $\alpha \in (-0.1,0.1)$), see Lemma \ref{lem:spline_local}.

\subsection{Moment approximation} 
The interpolation accuracy is relevant to moment approximation, because a small $L^2$ error implies a small moment-approximation error (Lemma \ref{lem:l2bounds}). For example, Fig.~\ref{fig:tanh9_func}(c)~shows the standard deviation error $\left|\sigma (f) -\sigma (f_N)\right|$, see \eqref{eq:tanh}, when $\alpha$ is uniformly distributed in~$[-1,1]$. As expected, the spline-based method (Algorithm \ref{alg:spline_u}) is more accurate than the gPC-based method (Algorithm~\ref{alg:gpc_int}) with few samples, but the gPC is more accurate with sufficiently many samples. A purely statistical approach such as Monte-Carlo converges poorly compared to both the spline and gPC approach, with about $ 10\% $ error with $N\leq 100$ sample points. 


\subsection{Density estimation}\label{sec:pdf_tanh}

\begin{figure}[h!]
\centering
{\hspace{-1cm} 	\includegraphics[scale=0.55]{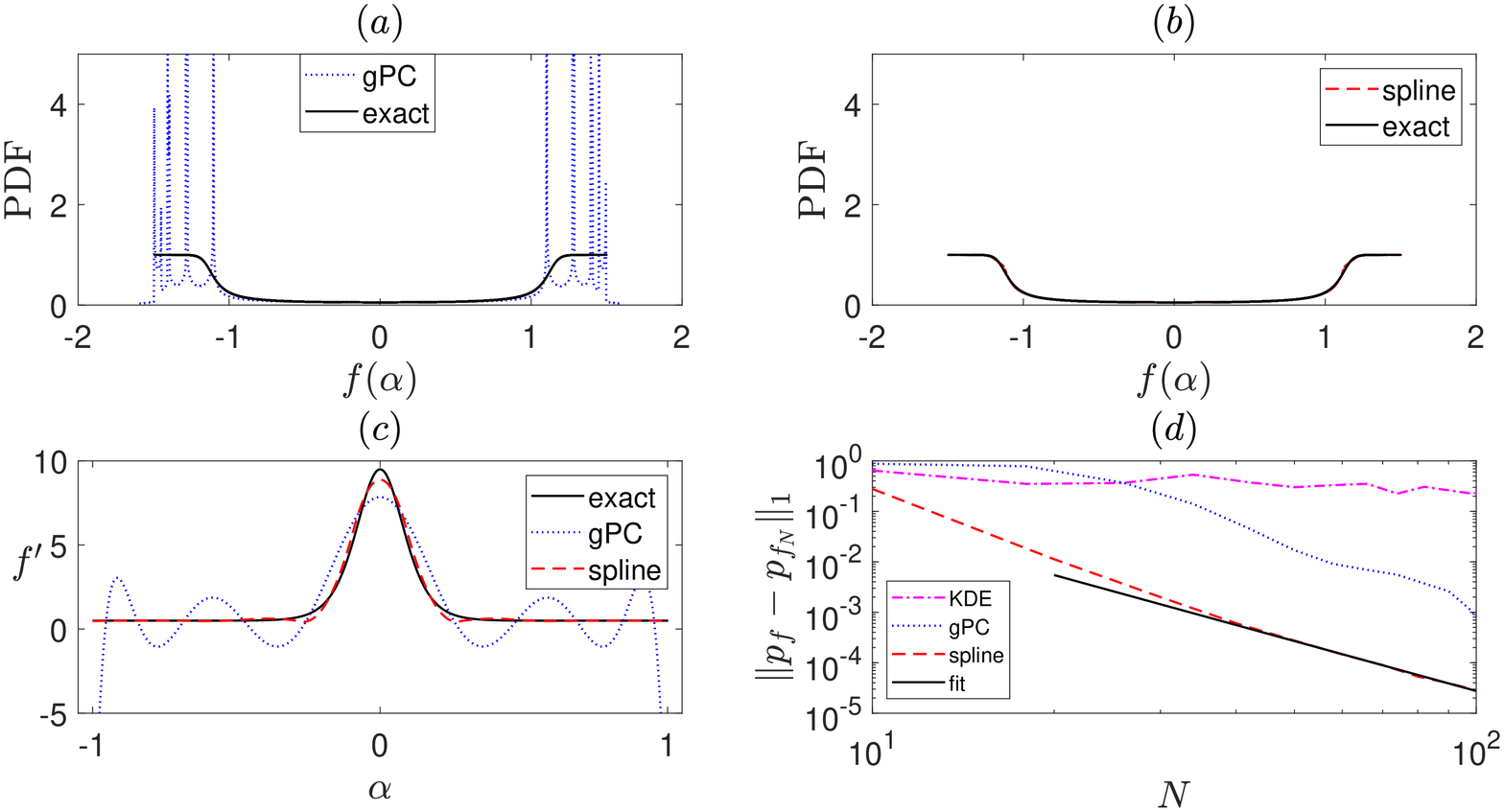}}
\caption{The PDF of $f(\alpha)$, see \eqref{eq:tanh}, where $\alpha$ is uniformly distributed in $[-1,1]$. (a)~exact PDF (solid) and its approximation by the gPC-based Algorithm \ref{alg:gpc_int} (dots) with $N=18$ sample points. (b) Same, with the spline-based Algorithm \ref{alg:spline_u} (dashes). The two lines are nearly indistinguishable. (c) Derivatives of $f$ (solid), $f_N ^{\rm spline}$ (dashes) and $f_N ^{\rm gpc}$ (dots). (d) $L^1$ error of the PDF approximations as a function of the number of sample points, for the KDE~(dash-dot), gPC-based approximation (dots), the spline-based approximation~(dashes), and its power-law fit $103.2 N^{-3.29}$ (solid).}
\label{fig:tanh9_PDF}
\end{figure}

Consider the PDF induced by $f(\alpha)$, see \eqref{eq:tanh}, when $\alpha$ is uniformly distributed in $[-1,1]$. The PDF computed by the gPC-based Algorithm \ref{alg:gpc_int} with $N=18$ sample points deviates considerably from the exact PDF, see Fig.\ \ref{fig:tanh9_PDF}(a), whereas the PDF computed by the spline-based Algorithm \ref{alg:spline_u} with $N=18$ sample points is nearly indistinguishable from the exact PDF, see~Fig.~\ref{fig:tanh9_PDF}(b).\footnote{The MATLAB code that generates this PDF approximation is given in Appendix \ref{app:code}.} This is consistent with our discussion in Sec.\ \ref{sec:spline}. Indeed, the derivative of the spline interpolant $\frac{d}{d\alpha}f_N ^{\rm spline}$ approximates $f'(\alpha)$ with cubic accuracy, whereas the derivative of the gPC interpolant $\frac{d}{d\alpha}f_N ^{\rm gpc}$ has many artificial extremal points where $\frac{d}{d\alpha}f_N ^{\rm gpc} (\alpha) = 0$, but $\frac{d}{d\alpha}f(\alpha) \neq 0$, see Fig.\ \ref{fig:tanh9_PDF}(c). 

The $L^1$ distance $\|p_f - p_{f_N}\|_1$ between the exact PDF~$p_f$~and its approximation~$p_{f_N}$ is presented in Fig.\ \ref{fig:tanh9_PDF}(d). For $10 \leq N \leq 100$ the spline-based approximation is more accurate than the gPC-based one by nearly two orders of magnitude. This is in contrast to moment estimation, see Fig.\ \ref{fig:tanh9_func}(c), in which the gPC approximation becomes more accurate for $N \geq 40$. Furthermore, we observe numerically that the spline-based method converges even faster than the $N^{-3}$ rate predicted by Theorem~\ref{thrm:cub_pdf}. The KDE approximation has roughly $10\%$ error for $N\leq 100$.\footnote{The poor accuracy of the KDE method is due to the fact that the KDE does not use the "functional information" $\left\{f_j =f(\alpha _j )\right\}_{j=1}^N$, but only the set $\left\{ f_j\right\}_{i=1}^N$.} Other frequently-used distances between distributions, such as the Hellinger distance $\frac{1}{\sqrt{2}} \left\| \sqrt{p_f}- \sqrt{p_{f_N}} \right\|_2 $ \cite{le2012asymptotics} and the Kullback-Leibler~(KL) Divergence\footnote{Intuitively, the $d_{\rm KL}$ measures the entropy added, or conversely, the information lost, in approximating~$p$~by~$p_{f_N}$.} 
 \cite{kl1951original}
\begin{equation}\label{eq:kl}
\int\limits_{-\infty}^{\infty} p(y)\log \left(\frac{p_f(y)}{p_{f_N}(y)} \right) \, dy \, \, ,
\end{equation}
produce similar results (data not shown).    
\subsection{Density estimation of non-smooth functions}\label{sec:nonsmooth}
Let 
\begin{equation}\label{eq:tanh_mod}
g(\alpha) =  f(\alpha)  \, {\rm mod} \, (0.7)  \, ,
\end{equation}
where $f$ is given by \eqref{eq:tanh}.\footnote{This example is motivated by our study of the NLS \cite{best2017paper}, where the cumulative phase~$\varphi(t;\alpha ) =~{\rm arg} \,\left[ \psi(t,0;\alpha) \right]$ is smooth, but the quantity of interest, the angle $\varphi \, {\rm mod} \, (2\pi )$, is discontinuous. See Sec.\ \ref{sec:cnls}. for another optics application which motivates this example.} Because \eqref{eq:tanh_mod} is non-smooth, with few samples neither the spline, nor the gPC interpolant are even remotely close to $g(\alpha)$, see Fig.\ \ref{fig:tanh9_mod_direct}. Therefore, to approximate the PDF associated with $g(\alpha)$, we first use Algorithms \ref{alg:gpc_int} and \ref{alg:spline_u} to approximate~$f(\alpha)~\approx~f_N(\alpha)$. Since $f$ is smooth, both approximations are reasonable with few samples, see Fig.~\ref{fig:tanh9_func}. Next, we approximate  $g(\alpha_m) \approx  f_N(\alpha_m) \, {\rm mod} \, (0.7)$, and compute the PDF of~$g$~using the histogram method on a high-resolution sampling grid ($M=2\cdot 10^6$). We again stress that evaluating~$f_N$~is computationally cheap, and therefore can be easily done with such a large sample. As in the smooth case, see Fig.~\ref{fig:tanh9_PDF}, the PDF approximated by the gPC-based Algorithm \ref{alg:gpc_int} with $N=18$ sample points has large deviations and converges poorly, see Fig.\ \ref{fig:tanh9_mod}(a), whereas the PDF approximated by the spline-based Algorithm~\ref{alg:spline_u} with~$N=18$ sample points is nearly identical to the exact PDF, see Fig.\ \ref{fig:tanh9_mod}(b). Indeed the $L^1$ error of spline-based PDF is smaller than that of the gPC-based PDF by at least an order of magnitude, for $20<N<50$, see Fig.~\ref{fig:tanh9_mod}(c). Although Theorem \ref{thrm:cub_pdf} applies only to~$C^4$~functions, we observe numerically that the convergence rate of the spline-based PDF is faster than $N^{-3}$. The KDE approximation for the PDF of~$g(\alpha)$~is less accurate than that of the spline-based and gPC-based approximations.

\begin{figure}[h!]
\centering
{ \includegraphics[scale=0.55]{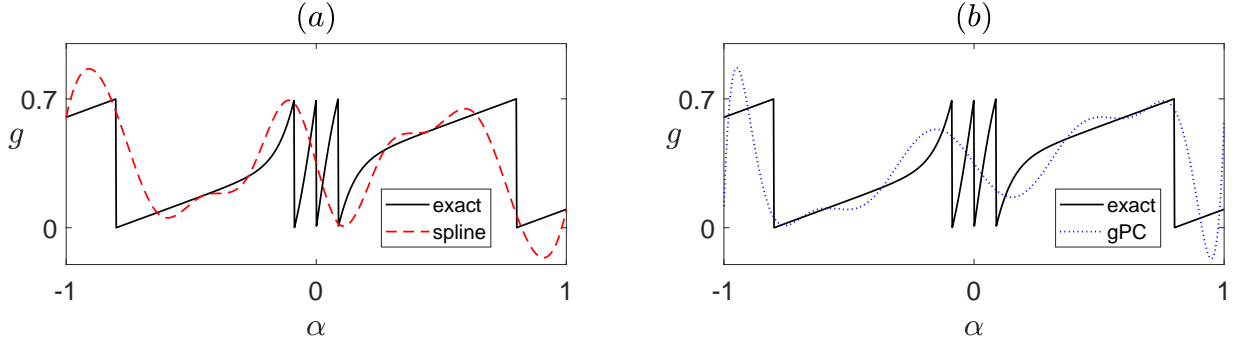}}
\caption{The discontinuous function $g(\alpha)$, see \eqref{eq:tanh_mod} (solid), and its spline interpolation with $N=12$ sample points (dashes). (b) Same with the gPC interpolant (dots).}
\label{fig:tanh9_mod_direct}
\end{figure}

\begin{figure}[h!]
\centering
{\hspace{0cm} \includegraphics[scale=0.55]{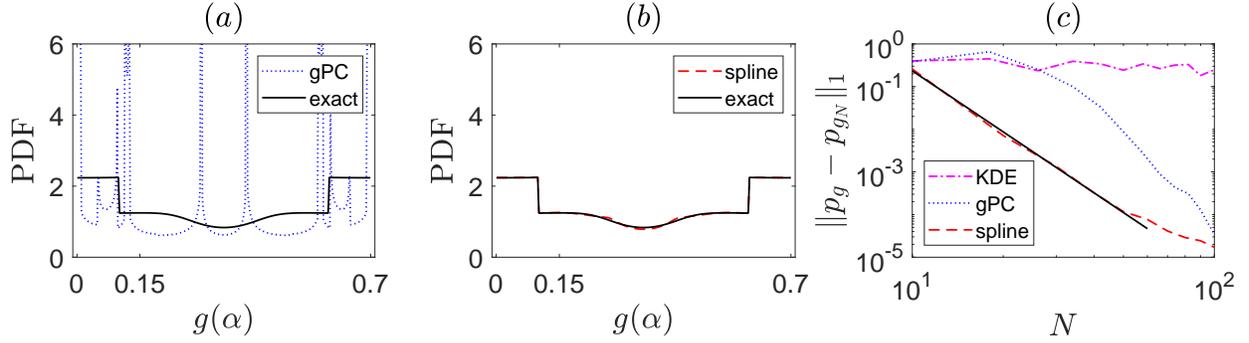}}
\caption{Same as Fig.\ \ref{fig:tanh9_PDF} for the discontinuous function $g(\alpha)$, see \eqref{eq:tanh_mod}. The solid line in subplot (c) is the power-law fit $1.33 \cdot 10^4 N^{-4.75}$ of the spline-based approximate PDF.}
\label{fig:tanh9_mod}
\end{figure}

\subsection{Multidimensional noise}\label{sec:multid_toy}

To numerically confirm the error bound of the density estimation (Algorithm \ref{alg:spline_multid}) for $d>1$, we first consider the two-dimensional function
\begin{equation}\label{eq:2d_tanh}
f_{\rm 2d}(\alpha_1, \alpha_2) = \tanh (6\alpha _1 \alpha _2 + \alpha _1 /2) + (\alpha _1 + \alpha _2 )/3 \, .
\end{equation}
where $\alpha _1$ and $\alpha_2$ are independent and uniformly distributed in $[-1,1]$. As in the one-dimensional example, see \eqref{eq:tanh}, $f_{\rm 2d}$ is analytic with high-gradients regions, see Fig.~\ref{fig:2d_tanh}(a). The spline-based PDF approximation with $N=8^2$ sample points is very close to the exact PDF of $f(\alpha _1 , \alpha _2)$, whereas the gPC-based PDF deviates from it substantially (Fig.~\ref{fig:2d_tanh}(b)). The convergence rate of Algorithn \ref{alg:spline_multid} with cubic splines is~$N^{-2.15}$ (Fig.\ \ref{fig:2d_tanh}(c)), which is consistent with the theoretical $N^{-\frac{3}{2}}$ error bound (Corollary \ref{corr:pdf_multid_N}). The convergence rates of both the KDE and the gPC methods are considerably slower for "small" sample sizes ($N\leq 200$).
\begin{figure}[h!]
\centering
{ \includegraphics[scale=0.4]{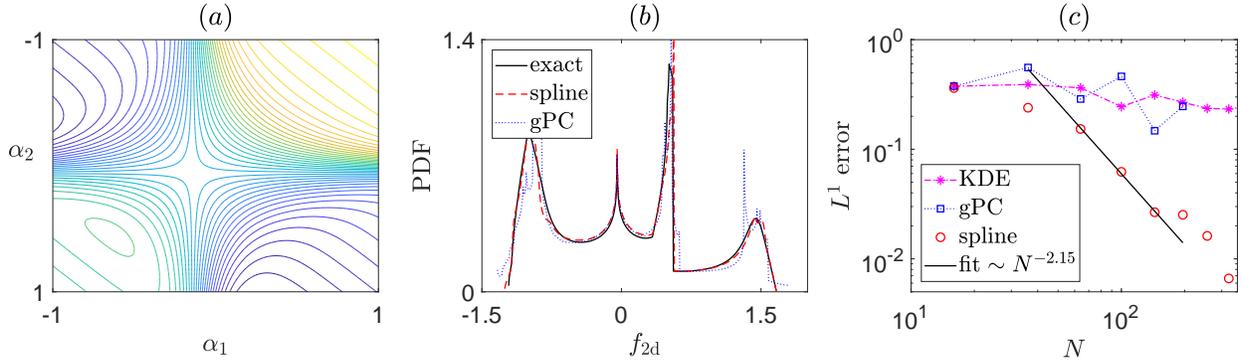}}
\caption{(a) Contours of the function $f_{\rm 2d}(\pmb{\alpha})$, see \eqref{eq:2d_tanh}. (b) The PDF of $f_{\rm 2d}(\pmb{\alpha})$ (solid), its approximation by the spline-based Algorithm \ref{alg:spline_u} (dashes), and by the gPC-based Algorithm \ref{alg:gpc_int} (dots). Here $\pmb{\alpha}$ is uniformly distributed in~$[-1,1]^2$, and both approximations use $N=64$ sample points. (c) $L^1$ error of the PDF approximations as a function of the number of sample points, for the KDE~(dash-dots), gPC-based approximation (dots-squares), the spline-based approximation~(circles). The solid line is the power-law fit $1208 N^{-2.15}$ (solid).}
\label{fig:2d_tanh}
\end{figure}

Next, consider the three-dimensional function
\begin{equation}\label{eq:3d_tanh}
f_{\rm 3d}(\alpha_1 ,\alpha_2 ,\alpha _3) = \tanh(8\alpha _1 +5\alpha _2 + 10\alpha _3) + (\alpha _1 +\alpha_2 +\alpha _3)/3 \, ,
\end{equation}
where $\alpha _1$, $\alpha _2$, and $\alpha _3$ are independent and uniformly distributed in $[-1,1]$. The spline-based PDF with $N=10^3$ sample points approximates the exact PDF well, see Fig.\ \ref{fig:3d_tanh}(a), and its convergence rate is $N^{-1.1}$ (see Fig.\ \ref{fig:3d_tanh}(b)), which is consistent with the theoretical $N^{-1}$ convergence rate (Corollary \ref{corr:pdf_multid_N}). For comparison, the fitted convergence rate of the KDE is $N^{-0.39}$, which is consistent with the theoretical $N^{-\frac{2}{5}}$ rate \cite{devroye1985l1}. Therefore, the spline-based method is more accurate than the KDE for sufficiently many samples ($N>10^3$). For smaller values of $N$ (e.g., $N=216$), however, the KDE achieves a slightly better accuracy than the spline-based method. This can be explained by what is known as the "curse of dimensionality". Thus, in the three-dimensional tensor-grid spline, $N=216$ sample points correspond to a mere {\em six} sample points in each dimension, which leads to insufficient resolution. The KDE method, on the other hand, does not approximate the underlying function $f_{\rm 3d}$, and is therefore "indifferent" to the noise dimension. See Sec.\ \ref{sec:final} for further discussion.
\begin{figure}[h!]
\centering
{ \includegraphics[scale=0.4]{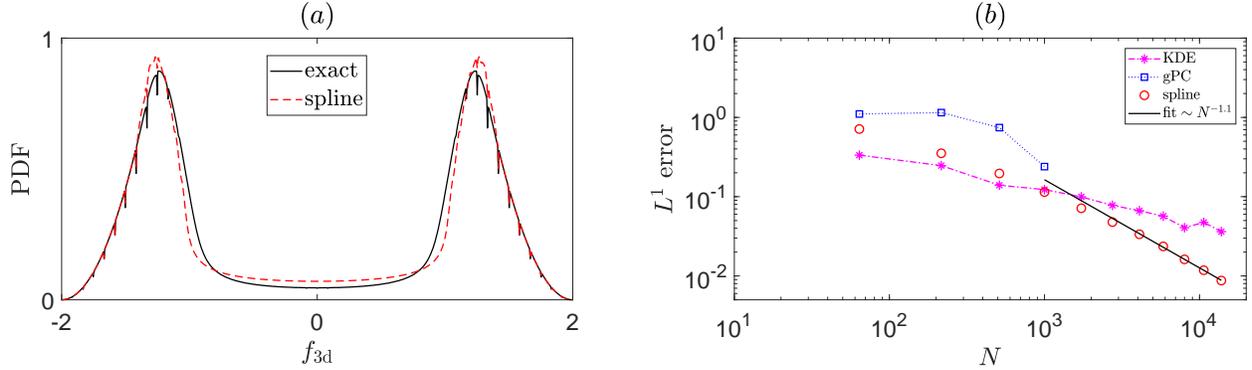}}
\caption{(a) The PDF of $f_{\rm 3d}(\pmb{\alpha})$, see \eqref{eq:3d_tanh}, where $\pmb{\alpha}$ is uniformly distributed in~$[-1,1]^3$ (solid) and its approximation by the spline-based Algorithm \ref{alg:spline_u} (dashes) with $N=8^3$ sample points. (b) $L^1$ error of the PDF approximations as a function of the number of sample points, for the KDE~(dash-dots), the gPC-based PDF(rectangles), the spline-based PDF~(circles), and its power-law fit $354N^{-1.11}$ (solid).}
\label{fig:3d_tanh}
\end{figure}

\section{Application 1 - Nonlinear Schr{\"o}dinger equation}\label{sec:cnls}

The one-dimensional coupled nonlinear Schr{\"o}dinger equation~(CNLS)
\begin{equation}\label{eq:cnls}
i\frac{\partial A_{\pm}(t,x)}{\partial t} + \frac{\partial ^2 A_{\pm}}{\partial x^2} + \frac{2}{3}\frac{\left|A_{\pm}\right|^2 + 2\left|A_{\mp} \right|^2}{1+\epsilon \left( \left|A_{\pm}\right|^2 + \left|A_{\mp} \right|^2 \right)}A_{\pm} = 0 \, ,
\end{equation}
where $0<\epsilon \ll 1$, $t\geq 0$, and $x\in \mathbb{R}$, describes the propagation of elliptically polarized, ultra-short pulses in optical fibers \cite{agrawal2007book}, of elliptically polarized continuous-wave (CW) beams in a bulk medium \cite{patwardhan2017loss, sheinfux2012measuring}, Stokes and anti-Stokes radiation in Raman amplifiers \cite{randoux2011intracavity}, and rogue water-waves formation at the interaction of crossing seas \cite{ablowitz2015interacting}. 
We consider \eqref{eq:cnls} with an elliptically-polarized Gaussian input pulse with a random amplitude  \cite{patwardhan2017loss, sheinfux2012measuring}
\begin{equation}\label{eq:rnd_elliptic_ic}
\left(\begin{array}{l}
A_+ \\ A_-
\end{array} \right) = \left(1+0.1\alpha\right)\left(\begin{array}{l}
8 \\ 4
\end{array}\right) e^{-x^2} \, ,
\end{equation}
where $A_+$ and $A_-$ are the clockwise and counter-clockwise circularly-polarized components, respectively. The {\em on-axis ellipse rotation angle} is defined as
\begin{equation}\label{eq:theta}
\theta (t;\alpha) :\,= \left(\varphi _+ (t;\alpha) - \varphi _- (t;\alpha)\right) ~ {\rm mod} \, (2\pi) \, , \end{equation}
where $\varphi _{\pm}(t;\alpha)  :\,= {\rm arg} \left[A_{\pm} (t,0;\alpha)\right]$ are the on-axis phases of the components. The distribution of $\theta(t;\alpha)$ indicates to what extent the ellipse rotation angle is "deterministic".\footnote{We solve the CNLS using a fourth-order, compact finite-difference scheme for the spatial discretization, and a predictor-corrector Crank-Nicolson scheme for the temporal integration of the semi-discrete problem~\cite{fibich2015nonlinear}.}

\begin{figure}[h!]
\centering
{\hspace{-1.2cm}\includegraphics[scale=0.53]{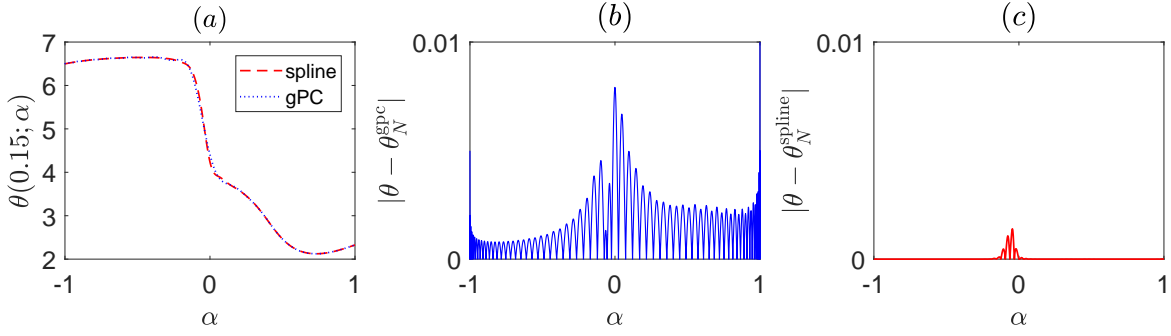}}
\caption{The polarization angle $\theta(t= 0.15;\alpha)$ for solutions of the CNLS \eqref{eq:rnd_elliptic_ic} with $\epsilon=10^{-5}$, and an elliptically polarized Gaussian initial condition \eqref{eq:rnd_elliptic_ic}. (a) Spline interpolation (dashes) and gPC interpolation (dots), with $N=64$ sample points. The two lines are nearly indistinguishable. (b) Pointwise error of the gPC interpolant. (c) Same for the spline interpolant. }
\label{fig:polar}
\end{figure}

\paragraph{Interpolation} For a given sample grid $\left\{\alpha_j\right\}_{j=1}^N$, we compute $\theta (t;\alpha_j)$ for each $1\leq j \leq N$ by solving \eqref{eq:cnls}--\eqref{eq:rnd_elliptic_ic} and using~\eqref{eq:theta}. Fig.\ \ref{fig:polar}(a) shows the spline and gPC interpolants of $\theta(t= 0.15;\alpha)$ with~$N=~64$ points.\footnote{Because we have no explicit solution for $\theta(t;\alpha)$, the errors in this section are measured by comparison with $\theta_{513} ^{\rm spline}(0.15,\alpha)$ with $N=513$ sample points. We verified that $\big\| \theta_{513} ^{\rm spline}(0.15,\alpha) -~\theta_{513} ^{\rm gpc}(0.15,\alpha)\big\| _2 \approx~5\cdot~10^{-5}$, which is an order of magnitude smaller than the approximation errors noted in the text.} While these interpolants seem nearly identical, the spline interpolant is more accurate than the gPC interpolant by more then an order of magnitude (cf.\ Figs.\ \ref{fig:polar}(b) and~\ref{fig:polar}(c)). Indeed, the $L^2$ error of the gPC interpolant ($0.17\%$) is an order of magnitude larger than that of the spline interpolant ($0.017\%$).


\paragraph{Density estimation}  The gPC-based approximation with $N=64$ differs substantially from the exact PDF, see Fig.\ \ref{fig:polar_pdf}(a). In contrast, the spline-based approximated PDF with $N=64$ sample points is indistinguishable from the exact PDF, see Fig.\ \ref{fig:polar_pdf}(b). Indeed, the KL divergence of the gPC-based approximation, see \eqref{eq:kl}, is {\em about $16,000$ times larger} than that of the spline-based approximation, and the $L^1$ error is $200$ times larger (${ 46\%}$ vs.\ ${ 0.2\%}$). With $N=32$, the spline-based {\em is $32$ times more accurate} than the gpc-approximated PDF, in term of KL divergence, and $11$ time more accurate in terms of the $L^1$ error (${ 41\%}$ vs.\ ${ 4.5\%}$). The $L^1$ error of the spline-based PDF decays as $N^{-3.76}$, see Fig.\ \ref{fig:polar_pdf}(c). {\revdone This results "exceeds expectations" with respect to Theorem~\ref{thrm:cub_pdf}, since $\theta'(0.15;\alpha)$ is not bounded away from $0$ (see Fig. \ref{fig:polar}(a)), and so Theorem \ref{thrm:cub_pdf} should not, in principle, apply to this case.} Since the PDF of $\theta(0.15;\alpha)$ has discontinuities and high derivatives, spline smoothing techniques and KDE methods with smooth kernels were not considered in this case.

\begin{figure}[h!]
\centering
{\includegraphics[scale=0.5]{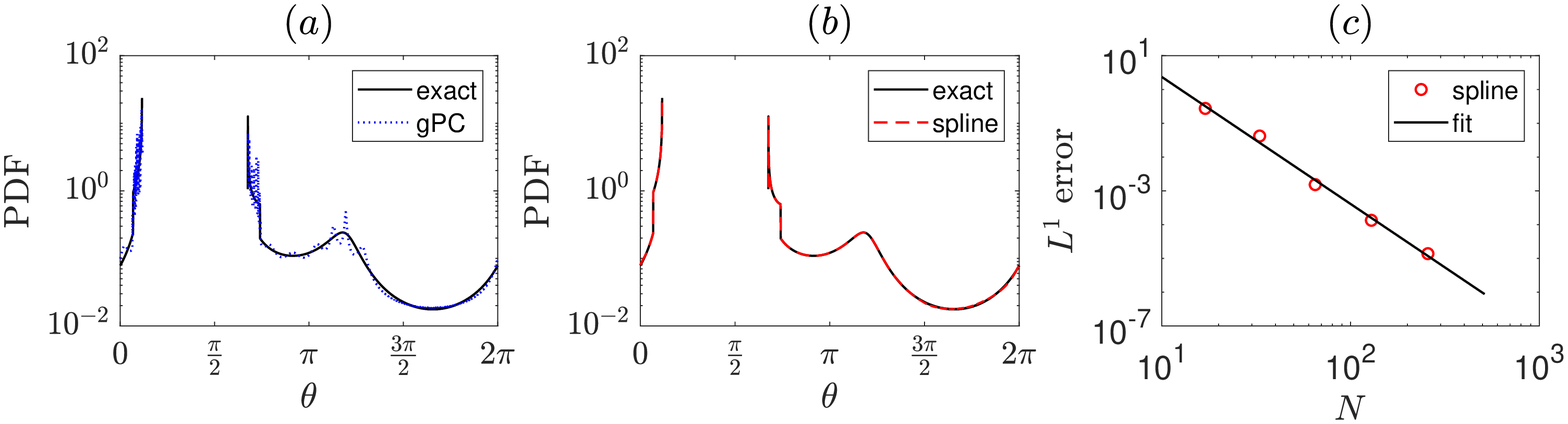}}
\caption{Same settings as in Fig.\ \ref{fig:polar}. The PDF of $\theta(0.15,\alpha)$, where~$\alpha \sim~U(-1,1)$. (a)~Exact PDF (solid), and gpc-based approximation using $N=64$ sample points (dots). (b) Same with the spline-based approximation~(dashes). The two lines are indistinguishable. (c) $L^1$ error of the spline-based PDF as a function of $N$ (circles) and the power-law fit $1.35\cdot 10^4 N^{-3.76}$ (solid).}
\label{fig:polar_pdf}
\end{figure}

\paragraph{Moment approximation} The mean and standard deviation of circular quantities can be defined as \cite{mardia2009directional}\footnote{To motivate why a different definition for circular moments is needed, consider $y\sim U(-\pi, \pi)$ and $z\sim U(0,2\pi)$. If we consider $y$ and $z$ as {\em angles}, or points on the circle, they are identical. Using the conventional mean definition, however, yields $\mathbb{E}[y] = 0$, but $\mathbb{E}[z]=\pi$.}
\begin{equation}\label{eq:circ}
\mathbb{E}_{\alpha}^{\rm circ} [\theta(t;\alpha)] = \int\limits_{-1}^{1} e^{i\theta(t;\alpha)} \, d\alpha \, , \qquad \sigma ^{\rm circ} (\theta) = \sqrt{-2{\rm ln} \, \left|\mathbb{E}_{\alpha}^{\rm circ}[\theta(t;\alpha)]\right|} \, .
\end{equation}
The advantage of splines over gPC with few samples for moments approximation can be seen in Table \ref{tab:circ_stats}. The approximation of $\mathbb{E}_{\alpha}^{\rm circ}[\theta(0.15;\alpha)]$ using the spline approximation with~$N=32$ is~$4$~times more accurate than that of the gPC; with~$N=64$~it is~$14$~times more accurate. The approximation of the standard deviation using the spline-based method with $N=32$ is $12$ times more accurate than the gPC; with $N=64$ it is $33$ times more accurate than the gPC-based approximation.

\begin{table}[h]
\center
\begin{tabular}{|c | c||c|c|c|}
\hline
\quad              & $N$ & gPC error & spline error & \footnotesize $ \frac{\text{gPC error}}{\text{spline error}}$ \normalsize \\
\hline
$\mathbb{E}_{\alpha} ^{\rm circ} \left[ \theta(0.15; \alpha) \right] $ & $ 32$ &  $2.2\%$ & $0.54\%$ & $4$   \\
$\mathbb{E}_{\alpha} ^{\rm circ} \left[ \theta(0.15; \alpha) \right] $ & $ 64$             & $0.089\%$ & $0.006\%$ & $14$   \\ \hline
$\sigma ^{\rm circ} \left( \theta(0.15; \alpha) \right) $ &$ 32$            & $0.64\%$ & $0.054\%$ & $12$  \\
$\sigma ^{\rm circ} \left( \theta(0.15; \alpha) \right) $ & $64$               & $0.031\%$ & $0.0009\%$ & $33$  \\
\hline
\end{tabular}
\caption{Approximation error of the circular mean and standard deviation, see \eqref{eq:circ}, of~$\theta(0.15,\alpha)$, see \eqref{eq:theta}, with gPC- and spline-based approximations, using $N$ sample points.}
\label{tab:circ_stats}
\end{table}


\section{Application 2 - inviscid Burgers equation}\label{sec:burgers}
The inviscid Burgers equation

\begin{equation}\label{eq:burgers}
u_t (t,x) + \frac{1}{2}(u^2)_{x} = \frac{1}{2}(\sin ^2 (x) )_x \, , \qquad x\in [0,\pi] \, ,\quad t\geq 0 \, ,
\end{equation}
with the initial and boundary conditions $ u(0,x) = u_0 (x)$ and $u(t,0)=u(t,\pi) =0$ models isentropic gas flow in a dual-throat nozzle. Solutions of this equation can develop a static shock wave at a lateral location~$x =X_{\rm s}$ \cite{salas1986multiple}. Following \cite{chen2005uncertainty}, we consider the case in which $\alpha$ is a random variable with a known distribution, $u_0(x)=u_0(x;\alpha)$ is random, and we wish to compute the PDF of $X_{\rm s} $ using Algorithms \ref{alg:gpc_int} and \ref{alg:spline_u}. In general, to do that requires, for each~$1\leq j \leq N$, to compute $X_{\rm s} (\alpha_j)$ by solving~\eqref{eq:burgers}~with $\alpha_j$. For the special initial condition
\begin{subequations}
\begin{equation}
u_0 (x) = \alpha \sin (x) \, ,
\end{equation}
however, the shock location is explicitly given by \cite{chen2005uncertainty}
\begin{equation}\label{eq:Xs_exp}
\alpha= -\cos (X_{\rm s}) \, \, .
\end{equation}  
\end{subequations} 
This explicit expression allows us to sample $X_{\rm s} (\alpha)$ without solving \eqref{eq:burgers}.

Consider the case where
\begin{equation}\label{eq:beta_n}
\alpha = \left\{ \begin{array}{ll}
\frac{-1+\sqrt{1+4\nu ^2}}{2\nu} \,  & {\rm if}~\nu\neq 0 \, , \\ 0 \, & {\rm if}~ \nu =0 \, ,
\end{array} \right. \,
\end{equation} and $\nu\sim \mathcal{N}(0,\sigma)$, i.e., it is normally distributed with a zero mean. Because $\alpha$ is not distributed by a classical, standard measure, there is no obvious choice of quadrature points to sample by, nor is there a "natural" orthogonal polynomials basis to expand the solution by. Therefore, the gPC approach cannot be straightforwardly applied.\footnote{Nevertheless, even for non-standard distributions, the expansion of~$\alpha$ by a classical orthogonal-polynomials basis can still converge spectrally, under certain conditions \cite{ditkowski2016gpc}.} We can, however, apply the gPC approach to this problem by denoting $X_{\rm s} (\nu) =~X_{\rm s} (\alpha (\nu))$, and approximating $X_{\rm s} (\nu)$ using the Hermite polynomials (which are orthogonal with respect to the normal distribution).\footnote{Indeed, in \cite{chen2005uncertainty} the authors use the gPC-Galerkin method with the Hermite polynomials \cite{ghanem2003stochastic, xiu2002galerkin}.} The gPC-based approximated PDF with $N=7$ sample points differs considerably from the exact PDF, see Fig.\ \ref{fig:Xs}(a). In contrast, the spline-based approximated PDF can be directly applied to $X_{\rm s} (\alpha)$, and it is nearly indistinguishable from the exact PDF already with $N=7$ sample points, see Fig.\ \ref{fig:Xs}(b). In general, the spline-based PDF approximation is more accurate than the gPC-based approximation by more than one order of magnitude for~$5<N<50$, see Fig.\ref{fig:Xs}(c). The $L^1$ error of the spline-based PDF is observed numerically to decay as $N^{-3.11}$, in accordance with Theorem \ref{thrm:cub_pdf}.


We repeated these simulations for the case with $\alpha \sim B(r,s)$, where $B(r,s)$ is the Beta distribution on $[-1,1]$.\footnote{The PDF of the Beta distribution on $[0,1]$ is $p(\alpha) = \frac{(\alpha ^{r-1} \left( 1-\alpha \right) ^{s-1} \Gamma(r+s)}{\Gamma (r) \Gamma (s)}$.} The spline based approximations are nearly identical to the exact PDF, whereas the gPC method were less accurate by an order of magnitude with few samples (results not shown).

\begin{figure}[h!]
\centering
{\includegraphics[scale=0.55]{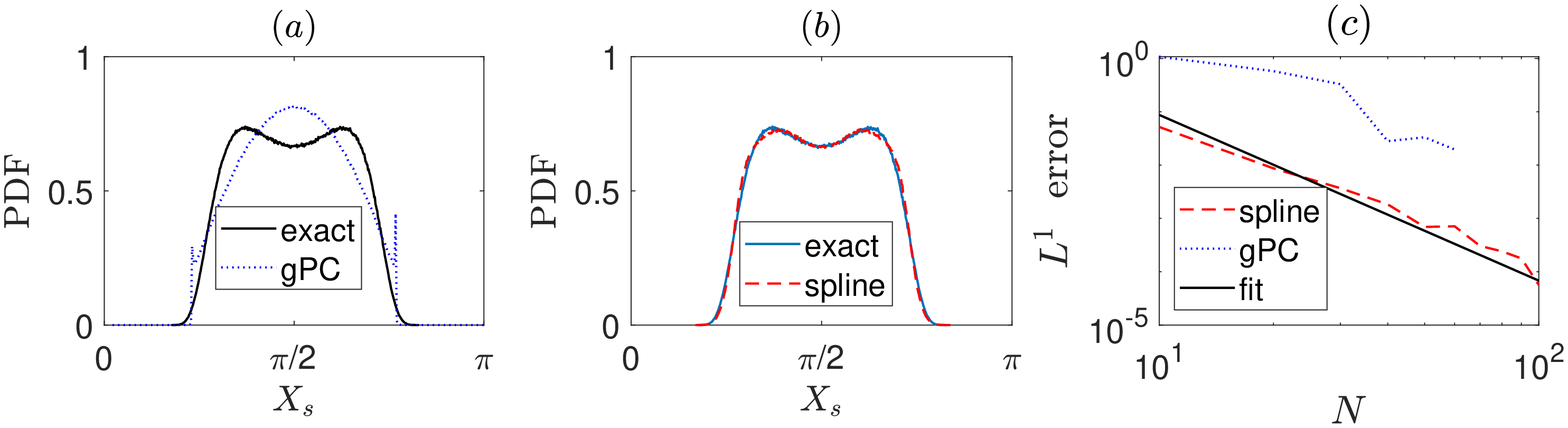}}
\caption{PDF of $X_s(\alpha)$, where $\alpha(\nu)$ is given by \eqref{eq:beta_n}, and $\nu \sim \mathcal{N}(0,0.6)$. (a) Exact PDF (solid) and gPC-based approximation (dots) with $N=7$ sample points. (b) Same with the spline-based approximation (dashes). (c) $L^1$ error of the PDF approximations as a function of the number of sample points, and the power-law fit $112N^{-3.11}$ (solid).}
\label{fig:Xs}
\end{figure}

\section{Discussion}\label{sec:final}

In this paper, we introduced a spline-based method for density and moment estimation. The advantages of this method are:
\begin{enumerate}
\item Our $m$-th order spline-based method approximates the density at a guaranteed convergence rate of $N^{-\frac{m}{d}}$, where $N$ is the sample size and $d$ is the noise dimension. Thus, our method outperforms KDEs for noise dimensions $1\leq d\leq \frac{5}{2}m$.
\item It provides reasonable approximations for the density and moments using small sample sizes. 

\item Its accuracy is relatively unimpaired by the presence of large derivatives.
\item It is non-intrusive, i.e., it is based solely on solving the underlying deterministic model.
\item It is easy to implement.
\item It is applicable with many choices of sample points.
\item It can be applied to non-smooth quantities of interest.
\end{enumerate}
\noindent

When $f\in C^{m+1}$, it is tempting to use splines of order $m>3$ for density estimation, in order to attain faster than cubic convergence rate. If one generalizes Algorithm \ref{alg:spline_u} to splines of order $m$ then, similarly to Theorem \ref{thrm:cub_pdf}, a convergence of order $N^{-m}$ is guaranteed. Even if $f$ is analytic, however, it is not advisable to take a large $m$, for two reasons. First, for $s(\alpha)$ to be monotone (and so, by Lemma \ref{lem:pdf} for the PDF to be continuous), $N$ should scale as $\sqrt[m]{\|f ^{(m+1)} \|_{\infty} }$, see~\eqref{eq:s_mono}. Therefore, for a large $m$, high-order convergence might only be attained for very large sample sizes. Second, the density approximation error depends linearly on $\left\| f^{(m+1)} \right\| _{\infty}$, see \ref{ap:cub_pdf_pf}, and so it might "blow-up" exponentially with~$m$. To conclude, although we do not know whether the optimal spline order is $m=3$, an arbitrarily high-order spline should not be used.

When approximating a $d$-dimensional function with a resolution $h$ at each dimension, the total number of samples $N$ scales as $h^{-d}$. As a result, for a prescribed accuracy, the computational cost grows exponentially with the dimension (the "curse of dimensionality"). In other words, for a given $N$, the accuracy decays exponentially with the dimesnion. Indeed, this is consistent with the $N^{-\frac{m}{d}}$ error estimate of the spline-based Algorithm \ref{alg:spline_multid} (Corollary \ref{corr:pdf_multid_N}). In contrast, the KDE method, which is a standard nonparametric statistical density estimator, converges at a rate of $N^{-\frac{2}{5}}$, regardless of $d$. Hence, our method will outperform KDE for "low" dimensions ($d <\frac{5}{2}m$), but may become inferior to KDE at higher dimensions. 

A popular approach for moment estimation of high-dimensional noise is the use of sparse sampling grids \cite{ghanem2017handbook, xiu2010numerical}. Recently, a spline approximation based on sparse grids was used in the context of forward uncertainty propagation \cite{halder2018adaptive}. Most sparse-grid methods, however, are designed with moment estimation in mind. As we have seen, even in the one-dimensional case (see Sec.\ \ref{sec:spline_pdf}), an accurate moment approximation does not necessarily imply an accurate density estimation. Whether sparse-grids methods can be adapted to density estimation remains an open question. the proof of Theorem \ref{thm:multid} in Appendix \ref{app:multid_pdf_pf}, however, suggests sufficient conditions by which new approximation methods can be tested for efficient density estimation: (1) The settings should be such that Lemma \ref{lem:ddim_pdf} applies, and (2) the approximation method should have a pointwise error bounds similar to Theorem \ref{thm:tensor_spline}.

In this paper we showed that spline-based density estimation is better than gPC-based density estimation, because it does not produce numerous artificial extremal points (see Lemma \ref{lem:pdf}). An interpolating cubic spline, however, might still produce artificial extremal points, though not as much as the gPC polynomial. To absolutely prevent artificial extremal points from being produced, it may be better to use spline interpolants \cite{fritsch1980monotone} and quasi-interpolants~\cite{deboor1978splines}~which are {\em monotonicity-preserving} (i.e., splines which are monotone wherever the sampled data is monotone). Hence, although these methods have the same {\em order} of error (with respect to $h$) as spline interpolation, they may provide better approximations for small samples, as they are {\em guaranteed} not to produce artificial extremal points. We leave it to future research to check whether monotonicity-preserving interpolants provide more accurate PDF approximations than a standard interpolating cubic spline.

{\revdone As noted throughout the paper, the $L^{\infty}$ error bounds on the quantity of interest and its gradient are key for the success of our algorithm, see Corollaries \ref{cor:1d_gen} and \ref{cor:gen_multid}. Since locality plays an important role in the existence of such error bounds for splines, it is natural to explore the use of other local approximations such as NURBS \cite{piegl2012nurbs, turner2009nurbs} and Radial Basis functions (RBF) \cite{fornberg2015rbf, schaback1995rbf}. An additional improvement may be achieved by designing surrogate models that are on one hand local, but on the other hand supported on an unbounded domain, e.g., Gaussian Mixtures \cite{terejanu2008gauss}. While moment-approximation in the case of unbounded input random parameters (e.g., normally or exponentially distributed $\pmb{\alpha}$) are theoretically well understood, the rigorous study of density-estimation in these setting is left for future research.}

\section{Acknowledgments}
The authors thank Y.\ Harness, B.\ Brill, R.\ Kats, F.\ Abramovich, and D.\ Levin for useful comments and conversations.

\appendix
\section{Proof of Corollary \ref{corr:moment_gpc}}\label{app:l2bounds_pf}
We begin with the following Lemma:
\begin{lemma}\label{lem:l2bounds}
Let $(\Omega , \mu )$ be a probability space, denote $\| \cdot \|_p :\,=\|\cdot \|_{L^p(\Omega)}$, and let $f,g \in L^2  \cap L^1 $. Then
\begin{subequations}
\begin{align}
&\left| \mathbb{E}_{\alpha} [f] - \mathbb{E}_{\alpha} [g] \right| \leq \|f - g\|_2 \, \label{eq:mean_err} , \\
&\left| {\rm Var}(f) - {\rm  Var}(g) \right| \leq  (\sigma(f) +\sigma(g) ) \cdot \|f-g\|_2 \, \label{eq:var_err}  ,\\
&\left|\sigma(f)-\sigma(g) \right| \leq    \|f-g\|_2 \, . \label{eq:std_err}
\end{align}
\end{subequations}
\end{lemma}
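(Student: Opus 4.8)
The plan is to prove the three inequalities directly from the definitions in \eqref{eq:moments_def}, using Jensen's inequality (or equivalently Cauchy--Schwarz) together with the fact that on a probability space $\|\cdot\|_1 \leq \|\cdot\|_2$. For \eqref{eq:mean_err}, linearity of expectation gives $\mathbb{E}_{\alpha}[f] - \mathbb{E}_{\alpha}[g] = \mathbb{E}_{\alpha}[f-g]$, and then $|\mathbb{E}_{\alpha}[f-g]| \leq \mathbb{E}_{\alpha}[|f-g|] = \|f-g\|_1 \leq \|f-g\|_2$, where the last step uses that $\mu$ is a probability measure. (If the quantities are complex, as the definition of $\mathrm{Var}$ in \eqref{eq:moments_def} hints, the same bound holds with $|\cdot|$ the modulus.)

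For the standard-deviation bound \eqref{eq:std_err}, I would recognize $\sigma(f) = \|f - \mathbb{E}_{\alpha}[f]\|_2$ as a norm (the $L^2$ norm of the centered variable), so the reverse triangle inequality gives $|\sigma(f) - \sigma(g)| \leq \|(f - \mathbb{E}_{\alpha}[f]) - (g - \mathbb{E}_{\alpha}[g])\|_2 = \|(f-g) - \mathbb{E}_{\alpha}[f-g]\|_2$. Then I would bound this by $\|f-g\|_2$ using the fact that subtracting the mean is an orthogonal projection (it only decreases the $L^2$ norm); concretely, $\|h - \mathbb{E}_{\alpha}[h]\|_2^2 = \|h\|_2^2 - |\mathbb{E}_{\alpha}[h]|^2 \leq \|h\|_2^2$ with $h = f-g$. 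This yields \eqref{eq:std_err}.

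The variance bound \eqref{eq:var_err} then follows by factoring a difference of squares: $\mathrm{Var}(f) - \mathrm{Var}(g) = \sigma(f)^2 - \sigma(g)^2 = (\sigma(f) + \sigma(g))(\sigma(f) - \sigma(g))$, so $|\mathrm{Var}(f) - \mathrm{Var}(g)| = (\sigma(f)+\sigma(g))\,|\sigma(f)-\sigma(g)| \leq (\sigma(f)+\sigma(g))\|f-g\|_2$ by \eqref{eq:std_err}. So \eqref{eq:var_err} is essentially a corollary of \eqref{eq:std_err}.

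There is no real obstacle here; the only mild subtlety is the sign convention in \eqref{eq:moments_def}, where $\mathrm{Var}[f]$ is written as $|\mathbb{E}_{\alpha}[f]|^2 - \mathbb{E}_{\alpha}[|f|^2]$ (which is non-positive), so I would interpret $\sigma(f) = \sqrt{\mathrm{Var}[f]}$ with the convention that gives $\sigma(f) = \|f - \mathbb{E}_{\alpha}[f]\|_2$, i.e., the standard variance $\mathbb{E}_{\alpha}[|f|^2] - |\mathbb{E}_{\alpha}[f]|^2$; the inequalities are stated in terms of $\sigma$ and are insensitive to this. The whole argument is three or four lines and relies only on the triangle inequality, the identity $\|h - \mathbb{E}_{\alpha}[h]\|_2^2 = \|h\|_2^2 - |\mathbb{E}_{\alpha}[h]|^2$, and $\|\cdot\|_1 \leq \|\cdot\|_2$ on a probability space.
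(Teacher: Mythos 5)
Your proof is correct and uses essentially the same ingredients as the paper's: Cauchy--Schwarz (equivalently $\|\cdot\|_1\le\|\cdot\|_2$ on a probability space) for \eqref{eq:mean_err}, the identification $\sigma(h)=\|h-\mathbb{E}_{\alpha}[h]\|_2$ together with the contraction $\|h-\mathbb{E}_{\alpha}[h]\|_2\le\|h\|_2$, and a difference-of-squares factorization linking the variance and standard-deviation bounds. The only difference is the order: you prove \eqref{eq:std_err} first (via the reverse triangle inequality) and deduce \eqref{eq:var_err}, while the paper proves \eqref{eq:var_err} first and obtains \eqref{eq:std_err} as a quotient; the two are logically equivalent.
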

\begin{proof}

For all $f,g \in L^2$,
$$\Bigl\lvert \mathbb{E}_{\alpha} [f] - \mathbb{E}_{\alpha}[g] \Bigr\rvert \leq \int_{\Omega} \left| f(\alpha)-g(\alpha) \right| \, d\mu (\alpha) =\int_{\Omega} 1\cdot \left| f(\alpha)-g(\alpha) \right| \, d\mu (\alpha)  \leq \|1\|_2 \cdot \|f-g\|_2 = \| f - g\|_2 \, ,$$
where in the second inequality we used the Cauchy-Schwarz inequality. Thus, we proved~\eqref{eq:mean_err}.

For $h\in L^2 \cap L^1$, let $\tilde{h} :\,= h-\mathbb{E}_{\alpha} [h]$. By definition, ${\rm Var} (h) = \|\tilde{h}\|_2 ^2$ and $\sigma (h) = \|\tilde{h}\|_2$.~Hence,
\begin{equation}\label{eq:almost_var}
\begin{split}
\left| {\rm Var}(f) - {\rm Var}(g) \right| &= \left| \mathbb{E}_{\alpha} [\tilde{f} ^2 - \tilde{g} ^2] \right| = \left|\int\limits_{\Omega} (\tilde{f} - \tilde{g}) (\tilde{f}+\tilde{g} )\, d\mu (\alpha) \right| \leq  \|\tilde{f}  + \tilde{g}\|_2 \cdot \|\tilde{f} - \tilde{g} \| _2 \\
&\leq   \left( \|\tilde{f}\|_2  + \|\tilde{g}\|_2 \right)\cdot \|\tilde{f} - \tilde{g} \| _2 = \left(\sigma(f) + \sigma(g) \right) \cdot \|\tilde{f} -\tilde{g}\|_2 \, .
\end{split}
\end{equation}
In addition, $\|\tilde{h} \|_2 ^2  ={\rm Var} (h) = \mathbb{E}_{\alpha} [h^2] - \mathbb{E}_{\alpha} ^2 [h] \leq \mathbb{E}_{\alpha} [h^2] = \|h\|_2 ^2 \, , $ and so $\|\tilde{h}\|_2 \leq \|h\|_2$. Applying this inequality with $h=f-g$ to \eqref{eq:almost_var} yields \eqref{eq:var_err}. Finally, by \eqref{eq:var_err},

$$\bigl\lvert \sigma(f)-\sigma(g) \bigr\rvert = \left|\frac{\sigma ^2(f) - \sigma ^2 (g)}{\sigma (f) +\sigma (g)} \right|= \frac{\left|{\rm Var}(f) - {\rm  Var}(g)\right|}{\left| \sigma (f) +\sigma (g)\right|}\leq \frac{ \sigma (f) +\sigma (g)}{\sigma(f) +\sigma(g)} \|f-g\|_2 = \|f-g\|_2 \, .$$
which proves \eqref{eq:std_err}.
\end{proof}

In the case of gPC, let $g=f_N^{\rm gpc}$, the colocation gPC approximation of $f$, see \eqref{eq:col_gpc}. Since $f_N^{\rm gpc}$ converges exponentially to $f$ in the $L^2$ norm \cite{xiu2010numerical, gotlieb2007spectral}, Lemma \ref{lem:l2bounds} implies that the moments of $f_N ^{\rm gpc}$ converge exponentially to the moments of $f$. 

\section{Local properties of spline interpolation}\label{ap:local}
Let us first recall a classical result of Birkhoff and de Boor:
\begin{theorem}[\cite{deboor1964decay, deboor1976vanish}]\label{thm:deboor}
Let $s_i(\alpha)$ be the natural cubic spline that satisfies $s_i (\alpha _k) = \delta_{i,k}$, where $1\leq i,k \leq N$ and $\alpha_{\min} = \alpha_1
<\alpha_2 < \cdots <\alpha _N =\alpha_{\max}$ is given. Then $$\max\limits_{\alpha \not\in (\alpha_{i-k}, \alpha_{i+k})} \lvert s_i (\alpha) \rvert \leq A2^{-k} \, ,\qquad 1<i<N \, ,$$
where $A>0$ is a constant that depends on the global mesh ratio $\frac{\max\limits_{1<j\leq N} \alpha_j - \alpha _{j-1}}{\min\limits_{1<k\leq N} \alpha_k - \alpha _{k-1}}$.
\end{theorem}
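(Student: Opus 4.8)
The plan is to reduce Theorem~\ref{thm:deboor} to a quantitative decay estimate for the tridiagonal ``moment'' system of the cubic spline, and then to convert a bound on the knot-values of $s_i''$ into the stated pointwise bound on $s_i$. First I would set $h_j := \alpha_{j+1}-\alpha_j$ and $M_j := s_i''(\alpha_j)$. Since $s_i$ is a $C^2$ piecewise cubic satisfying the natural boundary conditions, $M_1 = M_N = 0$, and the classical derivation of the moment equations gives, for $2\le j\le N-1$,
\begin{equation*}
\mu_j M_{j-1} + 2 M_j + \lambda_j M_{j+1} = d_j, \qquad \mu_j=\tfrac{h_{j-1}}{h_{j-1}+h_j},\quad \lambda_j=\tfrac{h_j}{h_{j-1}+h_j},\quad \mu_j+\lambda_j=1,
\end{equation*}
where $d_j$ is six times the second divided difference of the data $(\delta_{i,k})_{k=1}^{N}$ over the nodes $\alpha_{j-1},\alpha_j,\alpha_{j+1}$. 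The crucial structural fact is that this second divided difference vanishes unless $i\in\{j-1,j,j+1\}$, so $d_j=0$ except for the three indices $j\in\{i-1,i,i+1\}$, where $|d_j|\le C/h_{\min}^{2}$ with $C$ absolute.

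Next I would bound the moments globally and then show they decay geometrically in $|j-i|$. The reduced $(N-2)\times(N-2)$ coefficient matrix has diagonal $2$ and off-diagonal row sums at most $\mu_j+\lambda_j=1$, hence it is strictly row-diagonally dominant with $\|\mathbf{A}^{-1}\|_\infty\le 1$; therefore $P:=\max_j|M_j|\le C/h_{\min}^{2}$. For the decay, note that for $j\ge i+2$ one has $d_j=0$, so $2|M_j|\le\mu_j|M_{j-1}|+\lambda_j|M_{j+1}|$. I would compare $m_j:=|M_j|$ on the index range $\{i+1,\dots,N\}$ against the barrier $\psi_j:=P\,2^{\,i+1-j}$, which satisfies $\psi_{i+1}=P\ge m_{i+1}$, $\psi_N\ge 0=m_N$, and, using $\mu_j+\lambda_j=1$, the supersolution inequality $2\psi_j-\mu_j\psi_{j-1}-\lambda_j\psi_{j+1}=3\lambda_j\,P\,2^{\,i-j}\ge 0$. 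The discrete maximum principle for the operator $u\mapsto 2u_j-\mu_j u_{j-1}-\lambda_j u_{j+1}$ then forces $m_j\le\psi_j$, i.e.\ $|M_j|\le P\,2^{-(j-i-1)}$ for $j\ge i+1$; the mirror-image argument gives $|M_j|\le P\,2^{-(i-1-j)}$ for $j\le i-1$. The base $2$ is exactly the threshold: a geometric barrier $\rho^{\,j}$ is a supersolution iff $2\rho-\mu-\lambda\rho^{2}\ge 0$ for all admissible $\mu,\lambda$ with $\mu+\lambda=1$, which for $\rho\in(0,1)$ forces $\rho\ge 1/2$.

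Finally I would pass from the knot moments to the sup-norm of $s_i$. Fix $\alpha\notin(\alpha_{i-k},\alpha_{i+k})$; by symmetry assume $\alpha\in[\alpha_j,\alpha_{j+1}]$ with $j\ge i+k$. On this interval $s_i$ is the unique cubic with $s_i(\alpha_j)=s_i(\alpha_{j+1})=0$ --- here the cardinality of $s_i$ is used --- and $s_i''(\alpha_j)=M_j$, $s_i''(\alpha_{j+1})=M_{j+1}$, so the explicit representation of that cubic yields $\|s_i\|_{L^\infty[\alpha_j,\alpha_{j+1}]}\le \tfrac{1}{3}\,h_j^{2}\max(|M_j|,|M_{j+1}|)$. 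Combining with the decay estimate, $|s_i(\alpha)|\le \tfrac{1}{3}\,h_{\max}^{2}\,P\,2^{-(k-1)}\le A\,2^{-k}$ with $A:=\tfrac{2}{3}\,h_{\max}^{2}P\le C\,(h_{\max}/h_{\min})^{2}$, which depends only on the global mesh ratio and is independent of $N$ and $i$; the left branch $j+1\le i-k$ is identical.

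The hard part will be the sharpness in the decay step: producing the clean base $2$ rather than an unquantified $\rho^{-k}$ is precisely what the explicit barrier together with the discrete maximum principle delivers, and one must verify that the comparison is legitimate on the reduced index range with the Dirichlet values $M_1=M_N=0$. Everything else --- the moment recursion, the diagonal-dominance bound on $\mathbf{A}^{-1}$, and the single-interval cubic estimate --- is routine; the one bookkeeping subtlety is tracking how $h_{\min}$ enters through the divided differences of the cardinal data, since that is what produces the mesh-ratio dependence of the constant $A$.
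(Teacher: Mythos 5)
The paper never proves Theorem~\ref{thm:deboor}; it is quoted as a classical result of Birkhoff and de Boor \cite{deboor1964decay, deboor1976vanish}, so there is no in-paper argument to compare against. Your proof is, as far as I can check, correct and complete, and it follows essentially the classical line of attack on cardinal-spline decay: reduce to the tridiagonal moment system, observe that the cardinal data loads only the three rows $j\in\{i-1,i,i+1\}$, and propagate a geometric barrier through the homogeneous rows. The individual steps all check out: the homogeneous relation $2|M_j|\le\mu_j|M_{j-1}|+\lambda_j|M_{j+1}|$ for $|j-i|\ge2$; the supersolution identity $2\psi_j-\mu_j\psi_{j-1}-\lambda_j\psi_{j+1}=\tfrac32\lambda_j\psi_j\ge0$ for $\psi_j=P\,2^{\,i+1-j}$, which holds exactly because $\mu_j+\lambda_j=1$; the Varah-type bound $1/(2-1)=1$ on the inverse of the strictly diagonally dominant coefficient matrix, giving $P\le 6\,h_{\min}^{-2}$ from the divided differences of the cardinal data; and the single-interval estimate (the sharp constant there is $\tfrac{2}{9\sqrt3}\,h_j^2\max(|M_j|,|M_{j+1}|)$, comfortably below your $\tfrac13 h_j^2$). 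The resulting $A\le 4\,(h_{\max}/h_{\min})^2$ depends only on the global mesh ratio, as the statement requires, and the natural boundary conditions $M_1=M_N=0$ supply exactly the Dirichlet data your comparison needs at the outer ends. For contrast, a standard alternative route writes $s_i$ in the B-spline basis and invokes the Demko--Moss--Smith theorem on exponential decay of inverses of banded positive-definite matrices; that yields a decay base around $0.27$ per knot rather than the clean $2^{-k}$, at the price of quoting a heavier linear-algebra result, whereas your barrier-plus-discrete-maximum-principle argument is elementary, self-contained, and pins the base $2$ explicitly.
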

Therefore, the natural cubic spline  $f_N^{\rm spline}(\alpha)$ is essentially a local approximation:
\begin{corollary}\label{lem:spline_local}
Denote the natural cubic spline $f_N^{\rm spline}=f_N^{\rm spline}(\alpha;f_1,\ldots , f_N)$ to emphasize the dependence of the spline interpolation on the sampled values. Then  $$\max\limits_{\alpha \not\in (\alpha _{i-k}, \alpha _{i+k} )} \Big\lvert \frac{\partial f_N^{\rm spline}(\alpha;f_1,\ldots , f_N)}{\partial f_i} \Big\rvert \leq~A2 ^{-k} \, , \qquad 1<i<N  \, ,\quad 1\leq k\leq N \, ,$$
where $A>0$ is given by Theorem \ref{thm:deboor}.
\end{corollary}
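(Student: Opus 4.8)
The plan is to exploit the fact that natural cubic spline interpolation is \emph{linear} in the interpolated data. First I would recall the classical fact that, given the grid $\alpha_{\min} = \alpha_1 < \cdots < \alpha_N = \alpha_{\max}$, the natural cubic spline interpolant exists and is unique, and depends linearly on the values $f_1,\ldots,f_N$: the interpolation conditions $s(\alpha_k)=f_k$, together with $C^2$-continuity and the natural boundary conditions $s''(\alpha_1)=s''(\alpha_N)=0$, form a linear system for the spline's coefficients whose solution is a linear function of $(f_1,\ldots,f_N)$. Introducing the fundamental (cardinal) natural cubic splines $s_i(\alpha)$ characterized by $s_i(\alpha_k)=\delta_{i,k}$ — which are precisely the natural cubic spline interpolants of the data $f_j=\delta_{i,j}$ — superposition then yields
$$ f_N^{\rm spline}(\alpha;f_1,\ldots,f_N) = \sum_{j=1}^N f_j\, s_j(\alpha). $$

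Next I would differentiate this identity with respect to $f_i$. Since the right-hand side is linear in the data and the $s_j$ do not depend on $f_1,\ldots,f_N$, one obtains at once
$$ \frac{\partial f_N^{\rm spline}(\alpha;f_1,\ldots,f_N)}{\partial f_i} = s_i(\alpha), \qquad 1 < i < N. $$
The assertion of the corollary is then exactly the decay estimate of Theorem \ref{thm:deboor} applied to $s_i$, namely $\max_{\alpha\notin(\alpha_{i-k},\alpha_{i+k})}|s_i(\alpha)| \le A 2^{-k}$ for $1\le k\le N$, with the same mesh-ratio-dependent constant $A$.

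The only points that call for a word of care — and where I would spend what little effort this proof needs — are (i) invoking the existence and uniqueness of the natural cubic spline interpolant, which legitimizes the superposition formula and the very notion of the map $(f_1,\ldots,f_N)\mapsto f_N^{\rm spline}$, and (ii) observing that, because this map is linear, the partial derivative with respect to a single data value $f_i$ is simply the coefficient function $s_i(\alpha)$ and is independent of the point in data space at which it is taken, so no subtlety arises in the differentiation step. Neither of these is a genuine obstacle; the corollary is a direct consequence of Theorem \ref{thm:deboor} once the linearity of the interpolation operator is made explicit, so I expect no hard step in the argument.
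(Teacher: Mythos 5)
Your proposal is correct and follows essentially the same route as the paper's proof: both express the interpolant as the superposition $\sum_j f_j s_j(\alpha)$ of the cardinal natural cubic splines, use uniqueness to identify $\partial f_N^{\rm spline}/\partial f_i = s_i(\alpha)$, and then apply Theorem~\ref{thm:deboor}. (Incidentally, you correctly state the natural boundary conditions as vanishing \emph{second} derivatives, whereas the paper's proof writes first derivatives there — a slip on the paper's side, not yours.)
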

\begin{proof}
The function $S(\alpha) = \sum\limits_{i=1}^N f_i s_i(\alpha)$, where $s_i (\alpha)$ are defined in Theorem \ref{thm:deboor}, is a $C^2$ cubic spline, which by definition satisfies $S(\alpha_i) = f_i$, and $\frac{d}{d\alpha}S(\alpha_1)=~\frac{d}{d\alpha}S(\alpha_N)=~0$. By the uniqueness of the natural cubic spline, $S(\alpha) = f_N ^{\rm gpc} (\alpha)$, so, $\frac{\partial f_N^{\rm spline}(\alpha;f_1,\ldots , f_N)}{\partial f_i} =~s_i(\alpha)$. Hence, by Theorem \ref{thm:deboor}, the corollary is proven.
\end{proof}

\section{Proof of Lemma \ref{lem:pdf}}\label{app:pdf_pf}

When $f$ is strictly increasing, its CDF is given by
$$
P_f(y) :\,= \int\limits_{\alpha _{\min} } ^{f^{-1}(y)} c(\alpha) \, d\alpha \, .$$
By the Leibniz rule and the inverse function theorem,
$$
p_f (y) = \frac{dP_f(y)}{dy} = c \left( f^{-1}(y)\right) \left( f^{-1}\right)' = c\left( f^{-1}(y)\right) \frac{1}{f' \left( f^{-1} (y)\right)} \, .
$$
Similarly, if $f$ is monotonically decreasing, then $P_f(y) = \int_{f^{-1} (y)}^{\alpha_{\max}} c(\alpha) \, d\alpha $, and so $$p_f (y)=~-~\frac{c(f^{-1}(y))}{f'(f^{-1}(y))} \, .$$ Note that since $f' <0$, then $p_f(y)\geq 0$. Finally, if $f$ is piecewise monotonic, we apply this method separately on each sub-interval on which it is monotonic, and sum up the contributions.

\section{Sample MATLAB code for Algorithm \ref{alg:spline_u}}\label{app:code}
The following MATLAB code generates the dashed curve in Fig.\ \ref{fig:tanh9_PDF}(b).
\lstset{language=Matlab,%
    breaklines=true,%
    morekeywords={matlab2tikz},
    keywordstyle=\color{blue},%
    morekeywords=[2]{1}, keywordstyle=[2]{\color{black}},
    identifierstyle=\color{black},%
    stringstyle=\color{mylilas},
    commentstyle=\color{mygreen},%
    showstringspaces=false,
    numbers=left,%
    numberstyle={\tiny \color{black}},
    numbersep=9pt, 
    emph=[1]{for,end,break},emphstyle=[1]\color{red}, 
}
\lstinputlisting{tanh_alg2.m}

\section{Proof of Theorem \ref{thrm:cub_pdf}}\label{ap:cub_pdf_pf}
Without loss of generality, we can assume that $f'(\alpha) \geq a >0$. For brevity, denote $s(\alpha) = f_N ^{\rm spline} (\alpha)$, $h=h_{\max}$, and $\left\| f^{(m+1)} \right\|_{\infty} = \left\| f^{(m+1)} \right\|_{L^{\infty}[\alpha_{\min} , \alpha_{\max} ]}$. In general, $s(\alpha)$ can be non-monotone. By Theorem \ref{thm:hall_spl}, however, $\lvert s'(\alpha) -~f'(\alpha)\rvert <~C_{\rm spl}^{(1,m)} \left\| f^{(m+1)} \right\|_{\infty} h^m$. Hence

\begin{equation}\label{eq:s_mono}
s'(\alpha)\geq \frac{a}{2}>0 , \qquad N> \sqrt[m]{\frac{2C_{\rm spl}^{(1,m)} \left\| f^{(m+1)} \right\|_{\infty}}{a}}\left(\alpha_{\max} - \alpha_{\min} \right) \, ,
\end{equation}
and so $s(\alpha)$ is monotonically increasing and invertible for sufficiently large $N$.\footnote{In the numerical example \eqref{eq:tanh}, this lower bound is roughly $N>30$.} Because~$s(\alpha)$ interpolates $f(\alpha)$, and because both functions are monotone, then ${\rm range} \, (s) = {\rm range} \, (f)$. Since $s,f \in C^1$ and are invertible, by Lemma \ref{lem:pdf}
\begin{equation}\label{eq:l1_basic}
\|p_f-p_s\|_1 :\,= \int\limits_{f(\alpha_{\min})}^{f(\alpha_{\max})} \left| p_f(y)-p_s(y) \right | \, dy  =  \int\limits_{f(\alpha_{\min})}^{f(\alpha_{\max})} \left| \frac{c\left( f^{-1} (y)\right)}{f'\left(f^{-1}(y)\right)} - \frac{c\left( s^{-1} (y)\right)}{s'\left(s^{-1}(y)\right)} \right| \, dy \, .
\end{equation}
Denote $y=f(\alpha)$ and $\alpha_{\star} :\,= \alpha_{\star} (\alpha) = s^{-1} \left(f(\alpha)\right)$. Then by a change of variable
\begin{equation}\label{eq:l1_bd_middle}
\|p_f -p_s\|_1 =  \int\limits_{\alpha_{\min}}^{\alpha_{\max}}  \left| \frac{c(\alpha)}{f'(\alpha)} - \frac{c(\alpha_{\star})}{s'(\alpha_{\star})} \right| f'(\alpha)\, d\alpha =  \int\limits_{\alpha_{\min}}^{\alpha_{\max}}  \left| s'(\alpha_{\star})c(\alpha) - f'(\alpha)c(\alpha_{\star})  \right| \frac{1}{s'(\alpha_{\star})}\, d\alpha \, .
\end{equation}
For all $\alpha \in [\alpha_{\min}, \alpha_{\max}]$, 
$$
\lvert s'(\alpha_{\star})c(\alpha) - f'(\alpha)c(\alpha_{\star}) \rvert \leq  c(\alpha)\lvert s'(\alpha _
{\star})-s'(\alpha) \rvert + c(\alpha)\lvert s'(\alpha)-f'(\alpha) \rvert + f'(\alpha)\lvert c(\alpha)-c(\alpha_{\star}) \rvert\, .$$
Because $s'(\alpha)$ and $c(\alpha)$ are differentiable,
\begin{equation}\label{eq:numerator_bd}
\lvert s'(\alpha_{\star})c(\alpha) - f'(\alpha)c(\alpha_{\star}) \rvert \leq  D \lvert \alpha - \alpha_{\star} \rvert + 
\|c\|_{\infty} \lvert f'(\alpha) - s'(\alpha) \rvert \, \, ,
\end{equation}
where $D  = \left[ \| c \|_{\infty} \cdot \| s'' \|_{\infty} + \| c' \| _{\infty}  \cdot \|f\|_{\infty} \right]$.\footnote{By the same argument as \eqref{eq:s_mono}, for a fixed $\epsilon >0$ there exists a sufficiently large $N_0$ such that  $s''(\alpha) \leq~f''(\alpha) + \epsilon$ for all $N > N_0$. Therefore $\max \,  \|s'' \|_{\infty} \leq \max \,\| f''\|_{\infty} + \epsilon$, and so $D$ is independent of~$N$, and depends only on $f(\alpha)$, $c(\alpha)$, $\alpha_{\min}$ and $\alpha_{\max}$.} By Lagrange's mean-value theorem, there exists~$\beta$ between $\alpha$ and $\alpha_{\star}$ such that 
$$s(\alpha)-s(\alpha_{\star})  = s'(\beta)\left( \alpha - \alpha_{\star} \right) \, .$$
On the other hand, since~$\alpha_{\star}=s^{-1} (f(\alpha))$, then $s(\alpha_{\star}) = f(\alpha)$, and so
$$ s(\alpha) - s(\alpha_{\star})  = s(\alpha) -f(\alpha) \, .$$
Therefore $\alpha - \alpha_{\star} = \frac{s(\alpha)-f(\alpha)}{s'(\beta)}$. By \eqref{eq:s_mono}, $s'(\beta) \geq \frac{a}{2}$, and by Theorem~\ref{thm:hall_spl}, we have $\lvert f(\alpha)-s(\alpha) \rvert \leq C_{\rm spl}^{(0,m)} \left\|f^{(m+1)} \right\| _{\infty} h^{m+1}$. Hence, $$\lvert \alpha - \alpha_{\star} \rvert \leq \frac{2C_{\rm spl}^{(0,m)} \left\|f^{(m+1)} \right\| _{\infty}}{a}h^{m+1} .$$
By Theorem~\ref{thm:hall_spl}, $\lvert f'(\alpha)-s'(\alpha) \rvert \leq C_{\rm spl}^{(1,m)}\left\|f^{(m+1)} \right\| _{\infty} h^m$.
Hence \eqref{eq:numerator_bd} reads 
\begin{equation}\label{eq:numerator_bd_final}
\lvert s'(\alpha_{\star}) c(\alpha) - f'(\alpha) c(\alpha_{\star}) \rvert \leq K_1 h^m +K_2 h^{m+1} \, ,
\end{equation} 
where $K_1 = C_{\rm spl}^{(0,m)} \|c\|_{\infty} \left\|f^{(m+1)} \right\| _{\infty}$ and $K_2 = \frac{2}{a}C_{\rm spl}^{(0,m)} \left\|f^{(m+1)} \right\| _{\infty} D$. Substituting $\frac{1}{s'(\alpha_{\star})} \leq  \frac{2}{a}$, see \eqref{eq:s_mono}, and~\eqref{eq:numerator_bd_final} in \eqref{eq:l1_bd_middle}, for sufficiently large $N$ such that $h=\frac{\alpha_{\max}-\alpha_{\min}}{N-1}<1$ we have that $$\|p_f -p_s\|_1 \leq  \int\limits_{\alpha_{\min}}^{\alpha_{\max}} \frac{2(K_1 + K_2)}{a} h^m \, d\alpha  = \frac{2(K_1 + K_2)}{a} (\alpha_{\max} - \alpha_{\min}) h^m \leq \frac{K}{N^m}  \, ,$$
where $K=\frac{2(K_1 + K_2)}{a}\left( \alpha_{\max} - \alpha_{\min} \right)^{m+1}$.

{\revdone Similarly, by \eqref{eq:numerator_bd_final}, we have that for $1\leq q < \infty$, $$\|p_f-p_s\|_q ^q \leq \int\limits_{\alpha _{\min}} ^{\alpha _{\max}} \left| \frac{2(K_1 +K_2)}{a}h^m \right |^q \, d\alpha \leq K^q(q) h^{qm} \, , $$
for a suitable $K(q)>0$, and so $\|p_f-p_s\|_q \leq K(q) h^m \leq K(q)N^{-m}$.}

\remark If $f'(\alpha)=0$ for some values of $\alpha$, the approximation $p_s$ is not guaranteed to converge in the $L^1$ norm. By \eqref{eq:numerator_bd_final}, however, we can guarantee a third-order convergence for the pointwise error $p_f(y)-p_s(y)$, for every real number $y$ such that $f'(\alpha)$ does not vanish on~$\{\alpha \, \left| \right. \, f(\alpha)=~y \}$.

\section{Proof of Lemma \ref{lem:quar}}\label{sec:quar}

Similarly to the proof of \eqref{eq:numerator_bd},
$$\lvert s'(\alpha_{\star})c(\alpha) - f'(\alpha)c(\alpha_{\star}) \rvert \leq  D \lvert \alpha - \alpha_{\star} \rvert + 
c(\alpha) \lvert f'(\alpha) - s'(\alpha) \rvert \, \, .$$
Because $\lvert \alpha - \alpha_{\star}  \rvert  \leq  K_2 h^4$, then by \eqref{eq:l1_bd_middle},
\begin{equation}\label{eq:gammabd}
\| p_f -p_s\|_1 \leq \frac{2K_2}{a}h^4 +  \int\limits_{\alpha_{\min}} ^{\alpha_{\max}} \lvert f'(\alpha) - s'(\alpha) \rvert c(\alpha) \, d\alpha \, . 
\end{equation}
Since $f'(\alpha) - s'(\alpha)$ is continuous on $[\alpha_{\min},\alpha_{\max}]$, it vanishes and changes its sign only at~$J_N <~\infty$ points, denoted by $\alpha_{\min} = \gamma_0 < \gamma _1 < \cdots <\gamma _{J_N}= \alpha _{\max}$. Using integration by parts, the last integral reads 
$$\int\limits_{\alpha_{\min}} ^{\alpha_{\max}} \lvert f'(\alpha) - s'(\alpha) \rvert c(\alpha) \, d\alpha = \eta \sum\limits_{j=1}^{J -1} \left( -1 \right)^j \int\limits_{\gamma_{j}} ^{\gamma _{j+1}} \left( f'(\alpha) - s'(\alpha) \right) c(\alpha) \, d\alpha$$ $$ = \eta \sum\limits_{j=1}^{J_N -1} \left( -1 \right)^j\Big[ c(\gamma _{j+1})\left( f(\gamma_{j+1} ) - s(\gamma_{j+1}) \right) - c(\gamma _{j})\left( f(\gamma_{j} ) - s(\gamma_{j}) \right) - \int\limits_{\gamma_{j}} ^{\gamma _{j+1}} \left( f(\alpha) - s(\alpha) \right) c'(\alpha) \, d\alpha\Big] \, ,
$$
where $\eta = {\rm sign} \, \left[ f'(\alpha_{\min}) - s'(\alpha_{\min})\right]$. By Theorem \ref{thm:hall_spl}, ~$$\lvert c(\gamma _{j} ) (f(\gamma_j ) - s(\gamma _j) ) \rvert \leq \|c\|_{\infty} C_{\rm spl}^{(0)}\left\| f^{(4)} \right\|_{\infty}h^4  \, , \qquad 1\leq j \leq J_N \, ,  $$
and 
$$\Big| \int\limits_{\gamma_{j}} ^{\gamma _{j+1}} \left( f(\alpha) - s(\alpha) \right) c'(\alpha) \, d\alpha \Big| \leq  \left\| c' \right\|_{\infty}\left(\gamma _{j+1}  - \gamma_{j} \right) C_{\rm spl}^{(0)}\left\| f^{(4)} \right\|_{\infty}   h^4 \, , \qquad 1<j\leq J_N \,  .$$
Substituting these bounds in \eqref{eq:gammabd} yields $$ \| p_f -p_s\|_1 \leq \frac{2K_2}{a}h^4 +  K_3h^4+  K_4  J_N h^4 \, ,$$
where $K_3 = \|c'\|_{\infty} \lvert \alpha_{\max}  -\alpha_{\min}\rvert C_{\rm spl} ^{(0)}\left\| f^{(4)} \right\|_{\infty}$ and $K_4 =  2\|c\|_{\infty}C_{\rm spl} ^{(0)}\left\| f^{(4)} \right\|_{\infty}$. In the case of a uniform grid, the first two terms are $O(N^{-4})$, and the last term is $O(N^{-4}J_N)$, which completes the proof.

\section{Proof of Lemma \ref{lem:ddim_pdf}}\label{ap:ddim_pdf_pf}

For any $y\in \mathbb{R}$, the CDF of $f$ is

\begin{equation}\label{eq:multid_cdf}
P _f(y) =  {\rm Prob} \, \left\{ f(\pmb{\alpha}) \leq y\right\} = \frac{1}{\mu(\Omega)}\int_{D(y)}  \, d\mu (\pmb{\alpha})=  \frac{1}{\mu(\Omega)}\int_{D(y)} c(\pmb{\alpha}) \, d\pmb{\alpha} \, ,
\end{equation} where 
\begin{equation}\label{eq:DofY}D(y) :\, = \left\{ \pmb{\alpha}\in \Omega ~ | ~ f(\pmb{\alpha}) \leq y\right\} \, .
\end{equation}
To compute the PDF $p_f(y) :\,= \frac{d}{dy}P_f(y)$, we recall the {\em co-area formula}:
\begin{lemma}[\cite{evans2018measure}]\label{lem:coarea}
Let $A \subseteq \mathbb{R}^d$ be a Jordan set, let $u:A \to \mathbb{R}$ be Lipschitz and piecewise differentiable such that $u^{-1}(z)\subseteq A$ is a $(d-1)$ dimensional manifold for all $z\in \mathbb{R}$, and let~$g\in L^1 (A)$. Then
\begin{equation}\label{eq:coarea}
\int\limits_{A} g(\pmb{\alpha}) |\nabla u(\pmb{\alpha}) |  \, d\pmb{\alpha} = \int _{z\in u(A)} dz \int _{u^{-1} (z)} g(\pmb{\alpha}) \, d\sigma \, ,
\end{equation}
where $d\sigma$ is the $(d-1)$ dimensional surface element of $u^{-1} (z)$.
\end{lemma}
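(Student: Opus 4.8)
\textbf{Proof proposal for Lemma~\ref{lem:ddim_pdf}.}

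The plan is to differentiate the CDF representation \eqref{eq:multid_cdf} with respect to $y$ and identify the derivative as the surface integral in \eqref{eq:multid_pdf}, using the co-area formula (Lemma~\ref{lem:coarea}) as the main tool. Starting from $\mu(\Omega) P_f(y) = \int_{D(y)} c(\pmb{\alpha})\,d\pmb{\alpha}$ with $D(y)$ as in \eqref{eq:DofY}, the key observation is that $D(y)$ is a sublevel set of $f$, so $\int_{D(y)} c\,d\pmb{\alpha} = \int_{\{f(\pmb{\alpha})\le y\}} c(\pmb{\alpha})\,d\pmb{\alpha}$. I would first rewrite this as an integral over level values: apply Lemma~\ref{lem:coarea} with $A = \Omega$, $u = f$, and $g(\pmb{\alpha}) = c(\pmb{\alpha})/|\nabla f(\pmb{\alpha})|$, which is legitimate since $|\nabla f|$ is bounded away from zero on $\bar{\Omega}$ (so $g \in L^1(\Omega)$, as $c \in L^1$), and $f$ is piecewise differentiable with $f^{-1}(z)$ a $(d-1)$-dimensional manifold for each $z$ (this manifold structure follows from $|\nabla f|\neq 0$ via the implicit function theorem on each piece). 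The co-area formula then gives
\begin{equation*}
\int_{\Omega} c(\pmb{\alpha})\,d\pmb{\alpha} = \int_{\Omega} \frac{c(\pmb{\alpha})}{|\nabla f(\pmb{\alpha})|}|\nabla f(\pmb{\alpha})|\,d\pmb{\alpha} = \int_{z\in f(\Omega)} dz \int_{f^{-1}(z)} \frac{c(\pmb{\alpha})}{|\nabla f(\pmb{\alpha})|}\,d\sigma \, .
\end{equation*}

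Next I would restrict attention to the sublevel set by writing $\mathbbm{1}_{D(y)}(\pmb{\alpha}) = \mathbbm{1}_{(-\infty,y]}(f(\pmb{\alpha}))$, so that applying the co-area formula to $g(\pmb{\alpha}) = c(\pmb{\alpha})\mathbbm{1}_{D(y)}(\pmb{\alpha})/|\nabla f(\pmb{\alpha})|$ yields
\begin{equation*}
\mu(\Omega) P_f(y) = \int_{D(y)} c(\pmb{\alpha})\,d\pmb{\alpha} = \int_{-\infty}^{y} dz \int_{f^{-1}(z)} \frac{c(\pmb{\alpha})}{|\nabla f(\pmb{\alpha})|}\,d\sigma \, ,
\end{equation*}
where the inner integral vanishes for $z \notin f(\bar{\Omega})$. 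This exhibits $\mu(\Omega)P_f(y)$ as the integral of the single-variable function $G(z) := \int_{f^{-1}(z)} c/|\nabla f|\,d\sigma$ over $(-\infty, y]$. Then, by the Lebesgue differentiation theorem (or the fundamental theorem of calculus, provided $G$ is continuous at $y$), we get $\mu(\Omega) p_f(y) = \mu(\Omega)\frac{d}{dy}P_f(y) = G(y)$, which is precisely \eqref{eq:multid_pdf} after dividing by $\mu(\Omega)$.

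The main obstacle I anticipate is the rigorous justification that the $z$-integrand $G(z)$ has enough regularity for the differentiation step to recover $p_f$ pointwise (rather than merely a.e.), and more subtly, verifying the hypotheses of Lemma~\ref{lem:coarea} hold for $f$ restricted to the piecewise-differentiable setting — in particular that the level sets $f^{-1}(z)$ are genuinely $(d-1)$-dimensional manifolds up to a negligible set, including near the boundary $\partial\Omega$ where $f^{-1}(z)$ may be a manifold-with-boundary. Since the lemma only claims the formula \eqref{eq:multid_pdf} as an identity for the PDF (which is defined a.e. as a density), I expect the cleanest route is to establish the integrated identity $\mu(\Omega)P_f(y) = \int_{-\infty}^y G(z)\,dz$ for all $y$ and then invoke that two absolutely continuous measures with the same CDF have the same density a.e.; the footnote's reduction to Lemma~\ref{lem:pdf} in the one-dimensional case serves as a consistency check, since there $f^{-1}(y)$ is a finite point set, $d\sigma$ is counting measure, and the formula collapses to \eqref{eq:pdf_inve}.
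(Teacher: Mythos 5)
Your proposal does not address the statement you were asked to prove. The statement is Lemma~\ref{lem:coarea} itself, i.e., the co-area formula \eqref{eq:coarea}, which the paper does not prove at all but quotes from \cite{evans2018measure}. What you have written is a proof of Lemma~\ref{lem:ddim_pdf}, and it explicitly takes the co-area formula as its ``main tool'' --- that is, it presupposes precisely the identity \eqref{eq:coarea} that was to be established. As an argument for the assigned statement this is circular (or, more accurately, empty): the CDF representation, the choice $g=c/|\nabla f|$, and the final differentiation step all sit downstream of \eqref{eq:coarea} and contribute nothing toward proving it. An actual proof of the co-area formula for Lipschitz $u$ is a substantial piece of geometric measure theory (Hausdorff measures, the area formula for Lipschitz maps, approximation of Lipschitz functions by $C^1$ functions, and so on), none of which appears in your note; the paper deliberately sidesteps this by citing Evans--Gariepy, so there is no ``paper proof'' to reproduce, but your submission likewise leaves the stated lemma unproved.

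For what it is worth, had the target been Lemma~\ref{lem:ddim_pdf}, your argument is essentially the paper's own proof in Appendix~\ref{ap:ddim_pdf_pf}: there the co-area formula is applied with $A=D(y)$, $u=f$, $g=c/|\nabla f|$ (legitimate because $|\nabla f|\neq 0$ on $\bar{\Omega}$ makes $1/|\nabla f|$ bounded and $c\in L^1$, and because $\partial D(y)\subseteq f^{-1}(y)\cup\partial\Omega$ has measure zero, so $D(y)$ is Jordan), giving $\mu(\Omega)P_f(y)=\int_{-\infty}^{y}dz\int_{f^{-1}(z)}\frac{c}{|\nabla f|}\,d\sigma$, which is then differentiated in $y$ by the one-dimensional Leibniz rule --- exactly your plan, up to your cosmetic variant of inserting the indicator $\mathbbm{1}_{D(y)}$ and integrating over all of $\Omega$, and your (reasonable) extra care about recovering the density only a.e. So the mathematics you produced is sound, but it proves the wrong lemma; the co-area formula itself remains unaddressed.
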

We apply the co-area formula to the right-hand-side of \eqref{eq:multid_cdf} by substituting $A= D(y)$, $g = \frac{c}{|\nabla f |}$ and $u=f$ in \eqref{eq:coarea}. The use of \eqref{eq:coarea} is justified since
\begin{enumerate}
\item $D(y)$ is bounded, since $\Omega$ is bounded. We can therefore show that $D(y)$ is Jordan by proving that $m(\partial D(y))=0$, where $m$ is the Lebesgue measure in $\mathbb{R}^d$. Since $\partial D(y) \subseteq f^{-1}(y) \cup \partial \Omega$, it is sufficient to show that each of these sets is of measure zero. Indeed, $\Omega$ is Jordan, and so $m(\partial \Omega) = 0$. In addition, since $|\nabla f| \neq 0$ on $f^{-1}(y)$, by the implicit function theorem $f^{-1}(y)$ is a $(d-1)$ dimensional manifold, and so $m(f^{-1}(y)) = 0$.

\item $f$ is piecewise-differentiable by the conditions of the Lemma \ref{lem:ddim_pdf}. Furthermore, because $f$ is piecewise-differentiable on a {\em compact set} $\bar{\Omega}$, it is also Lipschitz.

\item Since $f$ is continuous and $|\nabla f|\neq 0$ on $\bar{\Omega}$, then $\frac{1}{|\nabla f|}$ is bounded from above. Therefore, since $c\in L^1$, so is $g= \frac{c}{|\nabla f |}$.

\end{enumerate}
Thus, by Lemma \ref{lem:coarea} and \eqref{eq:multid_cdf},
\begin{equation}
P _f(y) =   \frac{1}{\mu(\Omega)}\int_{D(y)} c(\pmb{\alpha}) \, d\pmb{\alpha}  =  \frac{1}{\mu(\Omega)}\int_{-\infty} ^y dz \int _{f^{-1}(z)} \frac{c}{|\nabla f|} \, d\sigma \, .
\end{equation}
The outer integral on the right-hand-side is over $(-\infty , y)$ since $f(D(y))\subseteq (-\infty, y)$, see \eqref{eq:DofY}. Finally, since $p_f(y) = \frac{d}{dy} P_f(y)$, differentiating the last integral using the (one-dimensional) Leibnitz integral rule yields \eqref{eq:multid_pdf}.

\section{Proof of Theorem \ref{thm:multid}}\label{app:multid_pdf_pf}

Since $f\in C^{m+1}(\Omega)$ and $\Omega$ is compact, $f$ is also Lipschitz. Hence, Lemma \ref{lem:ddim_pdf} can be applied with $m([0,1]^d) = 1$ and $c(\pmb{\alpha})\equiv 1$, yielding
\begin{equation}\label{eq:Iy_def2}
\| p_f -p_s\|_q ^q = \int\limits_{-\infty}^{\infty}  I^q(y) \, dy , \qquad I(y):\,= \Big|  \int_{f^{-1} (y)} \frac{1}{|\nabla f |} \,  d\sigma -   \int_{s^{-1} (y)} \frac{1}{|\nabla s |} \,  d\sigma \, \Big| \, ,
\end{equation}
where $\sigma$ is the $d-1$-dimensional surface measures induced by the Lebesgue measure. 

The outline of the proof is as follows:
\begin{enumerate}
\item For a fixed $y$ in the image of $s(\pmb{\alpha})$, we construct a cover  $\left\{A_j (y) \right\} _{j=1} ^d$ of $s^{-1}(y)$.

\item We then construct a set of maps $\phi _j :A_j (y) \to f^{-1} (y)$, which are characterized in Lemma \ref{lem:phij2}.  
\item We construct a {\em disjoint} cover $\tilde{A}_j \subseteq A_j (y)$ for $1\leq j \leq d$. Lemma \ref{lem:phijBj_almost_cover} proves that $\big\{\phi _j (\tilde{A}_j ) \big\} _{j=1} ^d$ are mutually disjoint, up to an $O(h^m)$ error, and  almost cover $f^{-1} (y)$, up to an $O(h^m)$ error.
\item By an inclusion-exclusion argument and the implicit function theorem, we split the integral of $I(y)$ to $d$ integrals over compact domains in $\mathbb{R}^{d-1}$.
\item By Theorem \ref{thm:tensor_spline}, and similarly to the proof of the one-dimensional counterpart (Theorem \ref{thrm:cub_pdf}), we bound each of the integrals obtained in step 3. Thus, we obtain a {\em pointwise} bound on $p_f(y)-p_s(y)$.
\item Finally, we use compactness of $\Omega$ and the fact that $f,s\in C^1(\Omega)$ to bound $\| p- p_s\|_1$.
\end{enumerate}

\subsection*{Step 1}
For brevity, denote by $\partial _{\alpha _j} = \frac{\partial}{\partial _{\alpha _j}}$ the partial derivative along the $j$-th axis for $1\leq j \leq d$. Fix $y$, and let $A_j=A_j (y) \subseteq s^{-1}(y)$ be defined by
\begin{equation}\label{eq:Aj_def2}
A_j :\,= \left\{ \pmb{\alpha} \in s^{-1} (y) ~ \Big| ~ \left|  \partial \alpha _j f(\pmb{\alpha}) \right| > \frac{\kappa _{\rm f}}{d} \right\} \,  \qquad j=1,\ldots, d \, .
\end{equation}
Since $|\nabla f| = \sqrt{\sum _{j=1} ^d (\partial _{\alpha_j} f )^2}\geq~\kappa _{\rm f}$ on $\Omega$, for every $\pmb{\alpha} \in s^{-1} (y)$ {\em at least} one component of $\nabla f (\pmb{\alpha}) $ satisfies~$|\partial _{\alpha _j} f| \geq \frac{\kappa _{\rm f}}{d}$.\footnote{\label{footnote_cover} Since $\kappa _{\rm f} \leq \sqrt{\sum_{j=1}^d (\partial_{\alpha_j}f)^2} \leq \sqrt{d}\max\limits_{j=1,\ldots ,d}|\partial _{\alpha _j} f|$, then $\max\limits_{j=1,\ldots ,d}|\partial _{\alpha _j} f| \geq \frac{\kappa _{\rm f}}{\sqrt{d}}> \frac{\kappa _{\rm f}}{d}$.} Hence, $\pmb{\alpha} \in A_j (y)$ for some $1\leq j\leq d$, and so
\begin{equation}\label{eq:Aj_cover2}
s^{-1}(y) = \cup _{j=1} ^d A_j (y)  \, .
\end{equation} 
\subsection*{Step 2}
Next, we prove the existence of the maps $\phi _j:A_j  \to f^{-1} (y)$.
\begin{lemma}\label{lem:phij2}

Let $\pmb{\alpha} \in A_j(y)$ and let $h$ be defined as in Theorem \ref{thm:tensor_spline}. Then for a sufficiently small~$h>0$,~there exists a real number $\delta = \delta (\pmb{\alpha})$ such that 
\begin{enumerate}

\item $\pmb{\alpha} + \delta(\pmb{\alpha} )\hat{e}_j  \, \in f^{-1} (y)$,
where $\hat{e} _j $ is the unit vector in the direction of the $j$-th axis.


\item The maps 
\begin{equation}\label{eq:phi_def2}
\phi _j(\pmb{\alpha}) :\,= \pmb{\alpha} + \delta (\pmb{\alpha}) \hat{e} _j \,   , \qquad j=1,\ldots, d \, .
\end{equation}
are injective from $A_j = A_j (y)$ to $f^{-1} (y)$.
\item For every $\pmb{\alpha}\in A_j$,
\begin{equation}\label{eq:phidelta_bd}
\delta (\pmb{\alpha}) = O(h^{m+1}) \, .
\end{equation}

\item For every~$E\subseteq A_j$, 
\begin{equation}\label{eq:measure_preserve}
\left|\sigma (E)-\sigma (\phi_j (E))\right| = O(h^{m}) \, ,
\end{equation}
where as in \eqref{eq:Iy_def2}, $\sigma$ is the $d-1$ dimensional surface measure induced by the Lebesgue measure on $\Omega$.
\end{enumerate}
\end{lemma}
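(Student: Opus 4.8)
The plan is to establish the four claims of the lemma in order, relying throughout only on the pointwise estimates of Theorem~\ref{thm:tensor_spline} for $j=0$ and $j=1$ together with the uniform continuity of $\nabla f$ on the compact cube $\Omega$. \emph{For items~(1) and~(3):} fix $\pmb{\alpha}\in A_j(y)$ and set $g(t):=f(\pmb{\alpha}+t\hat{e}_j)$. Since $\pmb{\alpha}\in A_j(y)$ we have $|g'(0)|=|\partial_{\alpha_j}f(\pmb{\alpha})|>\kappa_{\rm f}/d$, and by uniform continuity of $\nabla f$ there is $\rho>0$, depending only on $f$ and $d$ and not on $\pmb{\alpha}$ or $y$, with $|g'(t)|>\kappa_{\rm f}/(2d)$ for $|t|\le\rho$; hence $g$ is strictly monotone on $[-\rho,\rho]$. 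Meanwhile $s(\pmb{\alpha})=y$, so Theorem~\ref{thm:tensor_spline} with $j=0$ gives $|g(0)-y|=|f(\pmb{\alpha})-s(\pmb{\alpha})|\le C_m h^{m+1}$. Thus, once $h$ is small enough that $C_m h^{m+1}\le\tfrac{\kappa_{\rm f}}{2d}\rho$, the intermediate value theorem yields a unique $\delta=\delta(\pmb{\alpha})\in(-\rho,\rho)$ with $g(\delta)=y$, i.e.\ $\pmb{\alpha}+\delta\hat{e}_j\in f^{-1}(y)$; this is item~(1). For item~(3), the mean value theorem gives $y-f(\pmb{\alpha})=\partial_{\alpha_j}f(\xi)\,\delta$ for some $\xi$ on the segment, whence $|\delta|\le\frac{2d}{\kappa_{\rm f}}C_m h^{m+1}=O(h^{m+1})$. (Points of $s^{-1}(y)\cap\partial\Omega$ form a $\sigma$-null set and may be discarded, so we may assume $\phi_j(\pmb{\alpha})\in\Omega$; where the sign of $\delta$ would exit the cube one chooses it consistently, which only affects this null set.)

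\emph{For item~(2):} suppose $\phi_j(\pmb{\alpha})=\phi_j(\pmb{\beta})$ for $\pmb{\alpha},\pmb{\beta}\in A_j(y)$. Comparing all coordinates except the $j$-th shows that $\pmb{\alpha}$ and $\pmb{\beta}$ lie on a common line parallel to $\hat{e}_j$, and by item~(3) both lie within $O(h^{m+1})$ of their common image $\pmb{\gamma}=\phi_j(\pmb{\alpha})\in f^{-1}(y)$. On the short segment joining them, continuity of $\nabla f$ gives $|\partial_{\alpha_j}f|>\kappa_{\rm f}/(2d)$ for $h$ small, while Theorem~\ref{thm:tensor_spline} with $j=1$ gives $|\partial_{\alpha_j}(f-s)|\le C_m h^m$, so $|\partial_{\alpha_j}s|>\kappa_{\rm f}/(4d)>0$ there; hence $s$ is strictly monotone along that segment. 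Since $s(\pmb{\alpha})=s(\pmb{\beta})=y$, we conclude $\pmb{\alpha}=\pmb{\beta}$.

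\emph{For item~(4):} the estimate just used shows $\partial_{\alpha_j}s\neq0$ in a neighbourhood of $A_j(y)$, so there $s^{-1}(y)$ is locally the graph $\alpha_j=\psi(\mathbf{w})$ of a $C^1$ function of the remaining coordinates $\mathbf{w}$, and $f^{-1}(y)$ is locally the graph $\alpha_j=\chi(\mathbf{w})$; in these coordinates $\phi_j$ acts as the identity on $\mathbf{w}$, as it alters only the $\alpha_j$-coordinate. Decomposing $A_j(y)$ into finitely many such graph pieces, it suffices to bound, for $E$ inside one piece with base $B\subseteq[0,1]^{d-1}$,
\begin{equation*}
\big|\sigma(\phi_j(E))-\sigma(E)\big|=\Big|\int_{B}\Big(\sqrt{1+|\nabla_{\mathbf{w}}\chi|^2}-\sqrt{1+|\nabla_{\mathbf{w}}\psi|^2}\,\Big)\,d\mathbf{w}\Big|.
\end{equation*}
Implicit differentiation gives $\nabla_{\mathbf{w}}\psi=-\nabla_{\mathbf{w}}s/\partial_{\alpha_j}s$ and $\nabla_{\mathbf{w}}\chi=-\nabla_{\mathbf{w}}f/\partial_{\alpha_j}f$; since all first derivatives of $f$ and $s$ differ by $O(h^m)$ (Theorem~\ref{thm:tensor_spline}), the denominators are $\ge\kappa_{\rm f}/(4d)$, and the two graphs are evaluated at points $O(h^{m+1})$ apart (item~(3)), one gets $|\nabla_{\mathbf{w}}\chi-\nabla_{\mathbf{w}}\psi|=O(h^m)$ uniformly. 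As $x\mapsto\sqrt{1+|x|^2}$ is $1$-Lipschitz on $\mathbb{R}^{d-1}$, the integrand is $O(h^m)$, and since $|B|\le1$ the integral is $O(h^m)$, uniformly in $E$ and $y$; summing the finitely many pieces proves item~(4). This last step is the main obstacle: one must choose the local graph charts with control on their number, their size, and the lower bound for $\partial_{\alpha_j}s$ that is \emph{uniform} in $y$ and $h$, so that the $O(h^m)$ constants do not degenerate; the boundary effects noted above are harmless since $s^{-1}(y)\cap\partial\Omega$ is $\sigma$-null.
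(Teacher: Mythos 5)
Your proposal is correct and follows essentially the same route as the paper's proof: a uniform-length segment on which $|\partial_{\alpha_j}f|\geq\kappa_{\rm f}/(2d)$ plus the intermediate value theorem for items (1) and (3), strict monotonicity of $s$ along the connecting segment for the injectivity in item (2), and the implicit function theorem with graph representations over a common base for item (4). The only cosmetic difference is in item (4), where you compare the surface elements via the $1$-Lipschitz map $x\mapsto\sqrt{1+|x|^2}$ while the paper compares the equivalent quotients $|\nabla f|/|\partial_{\alpha_d}f|$ and $|\nabla s|/|\partial_{\alpha_d}s|$ directly by a triangle-inequality splitting; both yield the same $O(h^m)$ bound.
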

\begin{proof}
\begin{enumerate}
\item We prove this for the case where $y> f(\pmb{\alpha})$ and $\partial _{\alpha _j} f (\pmb{\alpha}) > 0$ on $\Omega$. The proofs for the three other cases are similar. Since $f\in C^{m+1} (\Omega )$ and $\Omega$ is compact, all the second derivatives of $f$ are bounded, and so $|\partial _{\alpha _j } ^2 f | <~M_2<\infty$ on $\Omega$. Hence, since $\partial_{\alpha _j} f(\pmb{\alpha})>\frac{\kappa _{\rm f}}{d}$, there exists a segment $L = L(\pmb{\alpha}) =  \big\{ \pmb{\alpha } + \xi \hat{e} _j ~ , ~ |\xi| < \xi _{\max} \big\}$, where $\xi _{\max}$ depends {\em only} on~$M_2$, such that $\partial _{\alpha _j} f > \frac{\kappa _{\rm f}}{2d}$ on $L$. Therefore $f(\pmb{\alpha} + \xi_{\max} \hat{e} _j)> f(\pmb{\alpha})+ \frac{\kappa _{\rm f}}{2d}\xi _{\max}$. By the mean-value theorem, $f$ attains on $L$ all values in $[f(\pmb{\alpha}),  f(\pmb{\alpha})+ \frac{\kappa _{\rm f}}{2d}\xi _{\max}]$. 

Now, by Theorem \ref{thm:tensor_spline}, since $\pmb{\alpha} \in s^{-1} (y)$ and since $y>f(\pmb{\alpha})$,
\begin{equation}\label{eq:yalpha_hm}
 y -f(\pmb{\alpha}) = s(\pmb{\alpha})-f(\pmb{\alpha}) \leq C_m h^{m+1} \, . 
\end{equation}
Hence, for $h$ sufficiently small, $y \in [f(\pmb{\alpha}),  f(\pmb{\alpha})+ \frac{\kappa _{\rm f}\xi _{\max}}{2d}]$, and so there exists a point  $\pmb{\alpha} + \delta(\pmb{\alpha})\hat{e}_j \, \in L$ such that $ f(\pmb{\alpha} + \delta  (\pmb{\alpha}) \hat{e}_j) = y$. 

\item Assume by negation that $\phi _j$ is {\em not} injective. Then there exist $\pmb{\alpha }^1, \pmb{\alpha }^2 \in A_j $ such that $\phi _j (\pmb{\alpha }^1)= \phi_j ( \pmb{\alpha }^2) =~\pmb{\lambda}$. Since $\phi _j$ only changes the $j$-th coordinate, see \eqref{eq:phi_def2}, we can regard $s$ and $f$ as single-variable functions of the $j$-th coordinate $\alpha _j$. Since $\phi _j (\pmb{\alpha }^1)= \phi_j ( \pmb{\alpha}^2)=\pmb{\lambda}$, from the proof of item (1) in this lemma it follows that $\pmb{\lambda} \in L(\pmb{\alpha}^1)\cap L(\pmb{\alpha}^2)$. Hence, the segment between $\pmb{\alpha }^1$ and $\pmb{\alpha}^2$ is contained in $L(\pmb{\alpha}^1)\cup L(\pmb{\alpha}^2)$, where we know that $|\partial _{\alpha _j} f|>\frac{\kappa _{\rm f}}{2d}$. By Theorem \ref{thm:tensor_spline}, this means that if $h$ is sufficiently small, $|\partial _{\alpha _j} s|>0$ on the segment between $\pmb{\alpha}^1$ and $\pmb{\alpha}^2$. This leads to a contradiction, since on the one hand $\pmb{\alpha} ^1, \pmb{\alpha} ^2 \in A_j (y) \subseteq s^{-1}(y)$, and so $s(\pmb{\alpha} ^1) = s(\pmb{\alpha }^2) = y$, but on the other hand $s(\pmb{\alpha})$ is strictly monotone on the segment between $\pmb{\alpha}^1$ and $\pmb{\alpha}^2$.

\item Since $f\in C^2$, and by \eqref{eq:phi_def2},
\begin{equation}\label{eq:phij_alphaj_pre}
\partial _{\alpha_j} f(\phi _j (\pmb{\alpha}))- \partial _{\alpha _j}f(\pmb{\alpha})=\partial _{\alpha_j} f(\pmb{\alpha}+\delta (\pmb{\alpha})\hat{e}_j)- \partial _{\alpha _j}f(\pmb{\alpha})= O(\delta(\pmb{\alpha})) \, .
\end{equation}
In addition, by Lagrange mean-value theorem, for any $\pmb{\alpha}\in s^{-1}(y)$ $$s(\pmb{\alpha}) -f(\pmb{\alpha}) = y-f(\pmb{\alpha}) = f(\pmb{\alpha} +\delta(\pmb{\alpha}) \hat{e}_j) -f(\pmb{\alpha}) = \partial _{\alpha _j} f(\pmb{\alpha}+\zeta \hat{e}_j) \cdot \delta(\pmb{\alpha}) \,, \qquad 0\leq \zeta \leq \delta \, .$$ Hence, using Theorem \ref{thm:tensor_spline}, and since $|\partial _{\alpha _j} f|\geq \frac{\kappa _{\rm f}}{2d}$ on the segment between $\pmb{\alpha}$ and $\phi _j (\pmb{\alpha})$ (see proof of item 1 in this lemma), we have that
\begin{equation}
|\delta (\pmb{\alpha})| = \left|\frac{s(\pmb{\alpha}) - f(\pmb{\alpha})}{\partial _{\alpha _j} f(\pmb{\alpha}+\zeta \hat{e}_j)} \right| \leq \frac{C_m h^{m+1}}{\frac{\kappa _{\rm f}}{2d}} = O(h^{m+1}) \, .
\end{equation}
\item For brevity of notations and without loss of generality, fix $j=d$, and let $E\subseteq A_d$. In this case, $\partial _{\alpha _d} s \neq 0$ on $E$,\footnote{$\partial _{\alpha _d} s \neq 0$ on $A_d$ for sufficiently small $h$ since $|\partial _{\alpha _d}f|\geq \frac{\kappa _{\rm f}}{d}$ on $A_d$, and since by Theorem~\ref{thm:tensor_spline} $|\partial _{\alpha _d} s - \partial _{\alpha _d} d| =O(h^m)$.} and so by the implicit function theorem there exists a function~$S$,~such that if $s(\alpha_1, \ldots, \alpha_d)=y$, then $\alpha _d = S(\alpha_1 , \ldots , \alpha _{d-1})$. The domain of $S$ is $$G_E :\,=  \big\{ (\alpha_1 , \ldots ,\alpha_{d-1} )  ~ | ~ \exists \alpha _d \in [0,1] ~~{\rm s.t.}~~ (\alpha_1, \ldots ,\alpha_d) \in E \big\}  \, .$$
In particular, if $(\alpha _1, \ldots ,\alpha_{d-1})\in G_E$, then $s(\alpha _1, \ldots , \alpha _{d-1}, S(\alpha _1, \ldots ,\alpha _{d-1})) =~y$. Therefore $$\sigma (E)= \int_{E} 1 \, d\sigma  = \int_{G_E}  \sqrt{1+|\nabla S|^2} \, d\alpha _1 \cdots d\alpha _{d-1} \, . $$
Furthermore, by the implicit function theorem, $\partial _{\alpha _j} S = -\frac{\partial _{\alpha _j}s}{\partial _{\alpha _d}s}$ for $1\leq j <d$, and so $$\sqrt{1 + |\nabla S|^2} = \sqrt{1 + \sum _{j=1}^{d-1} \left(\frac{\partial _{\alpha _j}s}{\partial _{\alpha _d}s}\right)^2}  = \frac{1}{\left|\partial _{\alpha _d}s \right|}\sqrt{\left(\partial _{\alpha _d}s\right)^2 + \sum _{j=1}^{d-1} \left(\partial _{\alpha _j}s\right)^2} = \frac{1}{\left|\partial _{\alpha _d}s \right|}|\nabla s| \, .$$ Hence, 
\begin{subequations}\label{eq:almost_measure_pre}
\begin{equation}
\sigma(E) = \int_{G_{E}} \frac{|\nabla s|}{\left|\partial _{\alpha _d}s \right|} \,  d\alpha _1 \cdots d\alpha _{d-1} \, .
\end{equation}
Next, since $|\partial _{\alpha _d} f|\geq \frac{\kappa _{\rm f}}{2d}$ on $\phi _d (E)$ (see proof of item 1 in this lemma), we similarly apply the implicit function on $\phi _d (E)$: there exists function $F:G_{\phi_d (E)} \to \mathbb{R}$ where $G_{\phi _d (E)} \subset \mathbb{R}^{d-1}$, such that $f(\alpha _1 , \ldots , \alpha _{d-1}, F(\alpha _1 , \ldots , \alpha _{d-1}))=~y$. Hence, since $\phi _d (E) \subseteq f^{-1} (y)$,
\begin{equation}
\sigma(\phi _d (E)) = \int_{\phi _d (E)}1 \, d\sigma = \int_{G_{\phi _d (E)}} \frac{|\nabla f|}{\left|\partial _{\alpha _d}f\right|} \,  d\alpha _1 \cdots d\alpha _{d-1} \, .
\end{equation}
\end{subequations}
Next, by item 2 of this lemma, then $\phi _d$ induces a bijection $\varphi_d: G_{E} \to G_{\phi _d (E)}$. But, because $\phi _d$ only alters the $\alpha _d$ coordinate, $\varphi_d =  {\rm Id}$, and so $ G_{E} = G_{\phi _d (E)}$. Using this equality and~\eqref{eq:almost_measure_pre}~yields
\begin{equation}\label{eq:almost_measure_premitive}
\begin{aligned}
\big| \sigma (E) - \sigma &(\phi _d (E)) \big| = \left| \int_{G_{E}}   \left( \frac{|\nabla f(\phi _d (\pmb{\beta}))|}{\left|\partial _{\alpha _d}f(\phi _d (\pmb{\beta})) \right|} - \frac{|\nabla s(\pmb{\beta})|}{\left|\partial _{\alpha _d}s(\pmb{\beta})\right|}  \right) \,  d\alpha _1 \cdots d\alpha _{d-1}\right| \\
&\leq \int_{G_{E}}  \frac{\big| |\nabla f(\phi _d (\pmb{\beta}))| \cdot | \partial _{\alpha _d}s(\pmb{\beta})| - |\nabla s(\pmb{\beta})|\cdot |\partial _{\alpha _d}f(\phi _d (\pmb{\beta})) | \big|}{| \partial _{\alpha _d}f(\phi _d (\pmb{\beta}))|\cdot | \partial _{\alpha _d}s(\pmb{\beta})|}  \,  d\alpha _1 \cdots d\alpha _{d-1} \, ,
\end{aligned}
\end{equation}

where for brevity, we denote $\pmb{\beta } :\,= (\alpha _1, \ldots , \alpha _{d-1}, S(\alpha _1, \ldots , \alpha _{d-1}))\in E$ and note that by \eqref{eq:phi_def2} $$(\alpha _1, \ldots , \alpha _{d-1}, F(\alpha _1, \ldots , \alpha _{d-1})) = \phi _d (\pmb{\beta})  \, .$$

To bound the right-hand-side of \eqref{eq:almost_measure_premitive},  note that since $|\partial _{\alpha _d} f|>\frac{\kappa _{\rm f}}{d}$ on $E$, and since by Theorem \ref{thm:tensor_spline}, $\big| \partial _{\alpha _d}s - \partial _{\alpha _d}f \big| \leq C_m h^m$, then for a sufficiently small $h$, $\big| \partial _{\alpha _d}s\big| > \frac{\kappa _{\rm f}}{2d}$ on $E$. Substituting these bounds in~\eqref{eq:almost_measure_premitive} yields
\begin{equation}\label{eq:measure_preserve_3}
\begin{aligned}
\big| \sigma (E) - &\sigma (\phi _d (E)) \big| \leq \\
&\frac{2d^2}{\kappa _{\rm f}^2}  \int_{G_{E}}\big| |\nabla f (\phi _d (\pmb{\beta}))| \cdot |\partial _{\alpha _d} s(\pmb{\beta})|
-  |\nabla s (\pmb{\beta})| \cdot |\partial _{\alpha _d} f(\phi _d (\pmb{\beta}))| \big|    d\alpha _1 \cdots d\alpha _{d-1}   \, .
\end{aligned}
\end{equation}

Therefore, we can rewrite and bound the right-hand-side integrand by
\begin{equation}\label{eq:integ_measure_triang}
\begin{aligned}
\big| |\nabla f &(\phi _d (\pmb{\beta}))| \cdot |\partial _{\alpha _d} s(\pmb{\beta})|
-  |\nabla s (\pmb{\beta})| \cdot |\partial _{\alpha _d} f(\phi _d (\pmb{\beta}))| \big|  \leq \\ 
& |\partial _{\alpha _d} s(\pmb{\beta})| \cdot \big| |\nabla f (\phi _d (\pmb{\beta}))| -|\nabla f ( \pmb{\beta})| \big| 
\quad +\quad 
|\nabla f ( \pmb{\beta})| \cdot \big| |\partial _{\alpha _d} s(\pmb{\beta})| - |\partial _{\alpha _d} s(\phi _d (\pmb{\beta}))| \big| \\
+& |\nabla f (\pmb{\beta})| \cdot \big| |\partial _{\alpha _d} s(\phi _d (\pmb{\beta}))|- |\partial _{\alpha _d} f(\phi _d (\pmb{\beta}))| \big| 
\quad +\quad 
|\partial _{\alpha _d} f(\phi _d (\pmb{\beta}))| \cdot \big ||\nabla f (\pmb{\beta})| - |\nabla s (\pmb{\beta})|\big| \, .
\end{aligned}
\end{equation}

Since $s,f\in C^2(\Omega)$ and $\Omega$ is compact, $\partial _{\alpha _d} s$, $\partial _{\alpha _d} f$ and $\nabla f$ are bounded on $\Omega$. Furthermore, since $s,f\in C^2$, the first and second term in the right-hand-side of \eqref{eq:integ_measure_triang} are $O(\delta)$, and so by \eqref{eq:phidelta_bd} both of these terms are $O(h^{m+1})$. In addition, by Theorem \ref{thm:tensor_spline} the third and fourth term on the right-hand-side of \eqref{eq:integ_measure_triang} are $O(h^m)$. Hence, the left-hand-side of \eqref{eq:integ_measure_triang} is~$O(h^m)$,~and so  finally, \eqref{eq:measure_preserve_3} reads 
$$\big| \sigma (E) - \sigma (\phi_d (E)) \big| \leq \frac{2d^2}{\kappa _{\rm f} ^2} \int _{G_E} Kh^m \, d\alpha _1 , \ldots , d_{\alpha _{d-1}} \leq \tilde{K} h^m \, ,$$
for some constant $\tilde{K} >0$.
\end{enumerate}
\end{proof}

We finish this step by noting that Lemma \ref{lem:phij2} would still hold if we interchange $f$ and $s$. Hence,
\begin{corollary}\label{corr:phij_reverse}
There exists sets $B_j \subseteq f^{-1} (y)$ such that $f^{-1} (y) = \cup_{j=1}^d B_j$ and maps $\tilde{\phi} _j :B_j \to s^{-1}(y)$ for which items 1-4 of Lemma \ref{lem:phij2} holds, interchanging $f$ and $s$.
\end{corollary}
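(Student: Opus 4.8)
The plan is to prove Corollary~\ref{corr:phij_reverse} by repeating the proof of Lemma~\ref{lem:phij2} almost verbatim, after observing that every property of $f$ invoked in that proof is one that $s$ also possesses, with constants independent of $h$. Concretely, the proof of Lemma~\ref{lem:phij2} used only: (i) $|\nabla f|\ge \kappa_{\rm f}>0$ on $\Omega$; (ii) $f\in C^2(\bar\Omega)$ with $\bar\Omega$ compact, so that the first and second derivatives of $f$ are bounded; and (iii) the estimates $\|f-s\|_\infty=O(h^{m+1})$ and $\|\partial_{\alpha_j}f-\partial_{\alpha_j}s\|_\infty=O(h^m)$ supplied by Theorem~\ref{thm:tensor_spline}. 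Item (iii) is symmetric in $f$ and $s$. Item (ii) holds for $s$ as well: since $s$ is an $m$-th degree tensor-product spline, $s\in C^{m-1}$, and for $m\ge 3$ Theorem~\ref{thm:tensor_spline} (with $j=2\le m-1$) gives $\|D^2 s\|_\infty\le \|D^2 f\|_\infty+C_m h^{m-1}$, a bound uniform in $h$; the first derivatives of $s$ are likewise bounded. For item (i), set $\kappa_{\rm s}:=\min_{\bar\Omega}|\nabla s|$; because $\big||\nabla s|-|\nabla f|\big|\le\sqrt d\,C_m h^{m}\to0$, we have $\kappa_{\rm s}\ge \kappa_{\rm f}/2>0$ once $h$ is small enough.

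With these preliminaries in hand I would first construct the cover. For fixed $y$ in the image of $f$, define
\begin{equation*}
B_j:=\Big\{\pmb\alpha\in f^{-1}(y)\ \Big|\ |\partial_{\alpha_j}s(\pmb\alpha)|>\tfrac{\kappa_{\rm s}}{d}\Big\},\qquad j=1,\dots,d .
\end{equation*}
Exactly as in the derivation of \eqref{eq:Aj_cover2}, the elementary inequality $\max_j|\partial_{\alpha_j}s|\ge |\nabla s|/\sqrt d\ge \kappa_{\rm s}/\sqrt d>\kappa_{\rm s}/d$ shows that each $\pmb\alpha\in f^{-1}(y)$ lies in some $B_j$, i.e.\ $f^{-1}(y)=\cup_{j=1}^d B_j$. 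Then, for $\pmb\alpha\in B_j$, I repeat the four items of Lemma~\ref{lem:phij2} with $(f,s)$ replaced by $(s,f)$: item~1 builds, using the bound on $\|\partial_{\alpha_j}^2 s\|_\infty$, a short segment $L=\{\pmb\alpha+\xi\hat e_j:|\xi|<\xi_{\max}\}$ on which $|\partial_{\alpha_j}s|>\kappa_{\rm s}/(2d)$, and since $|y-s(\pmb\alpha)|=|f(\pmb\alpha)-s(\pmb\alpha)|\le C_m h^{m+1}$, the mean value theorem yields $\tilde\phi_j(\pmb\alpha):=\pmb\alpha+\tilde\delta(\pmb\alpha)\hat e_j\in s^{-1}(y)$ with $\tilde\delta$ as in \eqref{eq:phi_def2}; item~2 is the same monotonicity-versus-contradiction argument, now using $|\partial_{\alpha_j}f|>0$ on $L$ (from Theorem~\ref{thm:tensor_spline} and $|\partial_{\alpha_j}s|>\kappa_{\rm s}/(2d)$) to force $\tilde\phi_j$ injective; item~3 follows from $\tilde\delta(\pmb\alpha)=\big(s(\pmb\alpha)-f(\pmb\alpha)\big)/\partial_{\alpha_j}s(\pmb\alpha+\zeta\hat e_j)=O(h^{m+1})$; and item~4, the surface-measure distortion bound \eqref{eq:measure_preserve}, is obtained by applying the implicit function theorem to $f$ on $B_j$ and to $s$ on $\tilde\phi_j(B_j)$ and running the telescoping estimate \eqref{eq:integ_measure_triang} with $f$ and $s$ swapped.

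There is no substantive obstacle here — the statement is deliberately a ``by symmetry'' remark — so the only thing requiring care is the bookkeeping around the regularity of $s$. The potential gap is that $s$ is merely $C^{m-1}$ whereas the original argument quietly used $f\in C^2$; this is harmless for the cubic (and higher) case $m\ge 3$, and for $m\le 2$ one instead notes that on each grid cell $s$ is a polynomial of degree $\le m$, so $\partial_{\alpha_j}^2 s$ is bounded cellwise (controlled by $\|D^2 f\|_\infty$ up to an $O(1)$ factor via Theorem~\ref{thm:tensor_spline}), which is all item~1 needs. I would therefore state the corollary's proof simply as: every use of the smoothness of $f$ in the proof of Lemma~\ref{lem:phij2} is a use of at most the uniform boundedness of the second derivatives of $f$ together with the $O(h^{m+1})$ and $O(h^m)$ closeness of $(f,\nabla f)$ to $(s,\nabla s)$; all of these properties are shared by $s$, so the proof applies verbatim with $f$ and $s$ interchanged, yielding the sets $B_j$ and maps $\tilde\phi_j$ of Corollary~\ref{corr:phij_reverse}.
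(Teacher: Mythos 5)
Your proposal is correct and takes the same route as the paper, which offers no argument beyond the one-line assertion that Lemma~\ref{lem:phij2} ``would still hold if we interchange $f$ and $s$''; your write-up simply makes that symmetry explicit by verifying that $s$ shares the three properties of $f$ actually used (gradient bounded below by $\kappa_{\rm f}/2$ for small $h$, uniformly bounded second derivatives, and the symmetric $O(h^{m+1})$/$O(h^m)$ closeness estimates of Theorem~\ref{thm:tensor_spline}). Your observation about the regularity of $s$ for small $m$ is a genuine point of care that the paper glosses over, but it does not change the approach.
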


\subsection*{Step 3}
Next, we re-partition $s^{-1}(y)$ into {\em disjoint} sets $\left\{ \tilde{A}_j \right\} _{j=1}^d$ where $\tilde{A}_j \subseteq A_j$ for every $1\leq j \leq d$. Let $\tilde{A}_1 :\, = A_1$, and define
\begin{equation}
\tilde{A} _j :\,= A_j \setminus \left( \cup _{k=1}^{j-1} \tilde{A}_k\right) \, , \qquad 1<j\leq d \, .
\end{equation}
Since by construction, $\cup_{j=1}^d \tilde{A} _j = \cup _{j=1}^d A_j$ , and since by \eqref{eq:Aj_cover2} $\cup _{j=1}^{d} A_j = s^{-1} (y) $, then $$\cup _{j=1}^d \tilde{A} _j =s^{-1}(y) \, .$$
Hence, since the sets $\big\{\tilde{A} _j\big\} _{j=1}^{d}$ are disjoint, we can rewrite the first component of $I(y)$, see \eqref{eq:Iy_def2}, as
\begin{equation}\label{eq:Iy_disjoint_Bj}
\int_{s^{-1}(y)} \frac{1}{|\nabla s |} \, d\sigma =\sum\limits_{j=1}^d  \int_{\tilde{A} _j} \frac{1}{|\nabla s |} \, d\sigma \, .
\end{equation}
To prove a counterpart of \eqref{eq:Iy_disjoint_Bj} for $\int _{f^{-1} (y)} \frac{1}{|\nabla f|}1 \, d\sigma$, we first prove the following Lemma:
\begin{lemma}\label{lem:phijBj_almost_cover}
Let $\sigma $ be the surface measure on $f^{-1} (y)$, let $\big\{ \tilde{A} _j\big \} _{j=1}^d$ be defined by \eqref{eq:Iy_disjoint_Bj} and~$\left\{ \phi _j \right\} _{j=1}^{d}$ be defined by \eqref{eq:phi_def2}. 
\begin{enumerate}
\item For any $1\leq k,j \leq d$ with $k\neq j$, then
\begin{equation}\label{eq:phij_almost_disjoint}
 \sigma \left( \phi _j(\tilde{A} _j) \cap \phi _k (\tilde{A}_k)\right) = O(h^{m}) \, . 
\end{equation}

\item \begin{equation}\label{eq:phij_almost_surj}
 \sigma \left(f^{-1}(y)  \setminus \cup _{j=1}^{d} \phi _j (\tilde{A} _j) \right) = O(h^{m}) \, . 
\end{equation}

\end{enumerate}
\end{lemma}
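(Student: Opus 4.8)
The plan is to prove both parts by reducing them to a single geometric estimate: for a fixed coordinate index $\ell$, a threshold $a$ of size comparable to $\kappa_{\rm f}$, and $\varepsilon = O(h^{m})$, the \emph{thin slab} $T_\ell(\varepsilon):=\{\pmb{\alpha}\in s^{-1}(y):|\partial_{\alpha_\ell}f(\pmb{\alpha})|\in[a-\varepsilon,a+\varepsilon]\}$, and likewise its $f^{-1}(y)$-analogue, has surface measure $O(h^{m})$. The two facts that funnel everything into this estimate are that the maps $\phi_j$ displace points only by $\delta(\pmb{\alpha})=O(h^{m+1})$ (item~3 of Lemma~\ref{lem:phij2}, see \eqref{eq:phidelta_bd}) and distort surface measure only by $O(h^{m})$ (item~4, see \eqref{eq:measure_preserve}); for the covering claim I also use the reverse maps $\tilde{\phi}_j:B_j\to s^{-1}(y)$ of Corollary~\ref{corr:phij_reverse}, with disjointified domains $\tilde{B}_1:=B_1$ and $\tilde{B}_j:=B_j\setminus\bigcup_{i<j}\tilde{B}_i$, which like the $\tilde{A}_j$ are disjoint and cover $f^{-1}(y)$.

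For item~1, fix $j>k$ and take $\pmb{\lambda}\in\phi_j(\tilde{A}_j)\cap\phi_k(\tilde{A}_k)$, so $\pmb{\lambda}=\pmb{\alpha}+\delta(\pmb{\alpha})\hat{e}_j$ with $\pmb{\alpha}\in\tilde{A}_j$ and $\pmb{\lambda}=\pmb{\beta}+\delta(\pmb{\beta})\hat{e}_k$ with $\pmb{\beta}\in\tilde{A}_k$; then $|\pmb{\alpha}-\pmb{\beta}|=O(h^{m+1})$ by \eqref{eq:phidelta_bd}. Since $\tilde{A}_j=A_j\setminus\bigcup_{i<j}A_i$ and $k<j$, the point $\pmb{\alpha}\notin A_k$, hence $|\partial_{\alpha_k}f(\pmb{\alpha})|\le\kappa_{\rm f}/d$; since $\pmb{\beta}\in A_k$, $|\partial_{\alpha_k}f(\pmb{\beta})|>\kappa_{\rm f}/d$; and since $\partial_{\alpha_k}f$ is Lipschitz on the compact set $\Omega$ ($f\in C^{m+1}$, $m\ge1$), $|\partial_{\alpha_k}f(\pmb{\alpha})|>\kappa_{\rm f}/d-O(h^{m+1})$. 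Thus $\phi_j^{-1}\big(\phi_j(\tilde{A}_j)\cap\phi_k(\tilde{A}_k)\big)$ lies in a thin slab on $s^{-1}(y)$ ($\ell=k$, $a=\kappa_{\rm f}/d$), and by \eqref{eq:measure_preserve} item~1 is reduced to the slab estimate; the case $j<k$ is symmetric.

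For item~2, let $\pmb{\lambda}\in f^{-1}(y)$ lie in $\tilde{B}_k$ and set $\pmb{\gamma}:=\tilde{\phi}_k(\pmb{\lambda})=\pmb{\lambda}+\tilde{\delta}\hat{e}_k\in s^{-1}(y)$ with $|\tilde{\delta}|=O(h^{m+1})$, and say $\pmb{\gamma}\in\tilde{A}_{j_0}$. If $j_0=k$, then $\pmb{\gamma}\in A_k$, and since for small $h$ the point $\pmb{\lambda}=\pmb{\gamma}-\tilde{\delta}\hat{e}_k$ lies on the monotonicity segment of $f$ along $\hat{e}_k$ through $\pmb{\gamma}$, on which $|\partial_{\alpha_k}f|\ge\kappa_{\rm f}/(2d)$ (cf.\ the proof of item~1 of Lemma~\ref{lem:phij2}), uniqueness of $\delta$ forces $\phi_k(\pmb{\gamma})=\pmb{\lambda}$, so $\pmb{\lambda}\in\phi_k(\tilde{A}_k)$ is already covered. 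If $j_0\ne k$, then membership of $\pmb{\gamma}$ in $\tilde{A}_{j_0}$ and of $\pmb{\lambda}$ in $\tilde{B}_k$ puts opposite inequalities (with thresholds differing by $O(h^{m})$, since $\min_\Omega|\nabla s|=\kappa_{\rm f}+O(h^{m})$) on $|\partial_{\alpha_\ell}f|$ and $|\partial_{\alpha_\ell}s|$ at the smaller index $\ell:=\min(j_0,k)$; combining this with $|\pmb{\gamma}-\pmb{\lambda}|=O(h^{m+1})$, Lipschitzness, and Theorem~\ref{thm:tensor_spline} (which gives $\|\nabla f-\nabla s\|_\infty=O(h^{m})$) confines $\pmb{\gamma}$, resp.\ $\pmb{\lambda}$, to a thin slab $T_\ell(O(h^{m}))$ on $s^{-1}(y)$, resp.\ $f^{-1}(y)$. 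Hence $f^{-1}(y)\setminus\bigcup_j\phi_j(\tilde{A}_j)$ is contained in finitely many thin slabs on $f^{-1}(y)$ together with the $\tilde{\phi}_k^{-1}$-preimages of finitely many thin slabs on $s^{-1}(y)$, and by \eqref{eq:measure_preserve} item~2 is again reduced to the slab estimate.

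It remains to prove $\sigma(T_\ell(\varepsilon))=O(\varepsilon)$ for $\varepsilon=O(h^{m})$, which I expect to be the main obstacle. On the part of $T_\ell(\varepsilon)$ lying in a cell $A_{j_0}$, the manifold $s^{-1}(y)$ is a graph $\alpha_{j_0}=S(\pmb{\alpha}')$ over the remaining $d-1$ coordinates with uniformly bounded Jacobian (since $|\partial_{\alpha_{j_0}}s|$ is bounded below and $|\nabla s|$ above there), so $\sigma$ is comparable to $(d-1)$-dimensional Lebesgue measure on the base, and $T_\ell(\varepsilon)$ becomes a sublevel set $\{|\tilde{g}(\pmb{\alpha}')|\in[a-\varepsilon,a+\varepsilon]\}$ of the $C^{m}$ function $\tilde{g}(\pmb{\alpha}'):=\partial_{\alpha_\ell}f(\pmb{\alpha}',S(\pmb{\alpha}'))$. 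The co-area formula in $\mathbb{R}^{d-1}$ then yields the bound $O(\varepsilon)$ \emph{provided} $|\nabla\tilde{g}|$ is bounded away from $0$ on the slab, i.e.\ the level hypersurface $\{|\partial_{\alpha_\ell}f|=a\}$ meets $s^{-1}(y)$ transversally. This transversality is the crux: it is not automatic, but it holds for almost every $y$ by Sard's theorem applied to $\pmb{\alpha}\mapsto\big(s(\pmb{\alpha}),|\partial_{\alpha_\ell}f(\pmb{\alpha})|\big)$, and since the target bound \eqref{eq:bd_pdf_multid} is an $L^{q}$ bound with $q<\infty$ obtained by integrating $I(y)$ over $y$, a null set of exceptional $y$ does no harm. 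What is left is to make the constant in $\sigma(T_\ell(\varepsilon))=O(\varepsilon)$ uniform in $y$ off that null set; this follows from compactness of $\Omega$, the $C^{m+1}$ regularity of $f$, and the uniform lower bound $|\nabla f|\ge\kappa_{\rm f}$ (which keeps $\sigma(f^{-1}(y))$, hence $\sigma(s^{-1}(y))$ for small $h$, uniformly bounded), possibly after replacing the threshold $\kappa_{\rm f}/d$ in \eqref{eq:Aj_def2} by a nearby regular value. Substituting the slab estimate into the two reductions completes the proof of both items.
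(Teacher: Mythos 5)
Your reductions of both items to the thin-slab estimate are correct, and they take a genuinely different route from the paper, most notably for item 2: the paper never classifies the uncovered points of $f^{-1}(y)$ at all, but instead derives \eqref{eq:phij_almost_surj} from \eqref{eq:phij_almost_disjoint} by inclusion--exclusion, $\sigma\big(\cup_j\phi_j(\tilde{A}_j)\big)=\sum_j\sigma(\tilde{A}_j)+O(h^m)=\sigma(s^{-1}(y))+O(h^m)$, combined with the reversed maps of Corollary \ref{corr:phij_reverse} to conclude $|\sigma(f^{-1}(y))-\sigma(s^{-1}(y))|=O(h^m)$ and hence \eqref{eq:phij_almost_surj}; this sidesteps your case analysis, and the slab estimate, entirely for that item. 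For item 1 the paper instead shows that $\phi_j^{-1}\big(\phi_j(\tilde{A}_j)\cap\phi_k(\tilde{A}_k)\big)$ lies within geodesic distance $O(h^{m+1})$ of $\partial\tilde{A}_j\cap\partial\tilde{A}_k$ and bounds the measure of that tubular neighborhood, asserting that this boundary is a finite union of smooth $(d-2)$-dimensional subsurfaces; your slab $\{|\partial_{\alpha_k}f|\in[\kappa_{\rm f}/d-O(h^{m+1}),\,\kappa_{\rm f}/d]\}$ is essentially a neighborhood of that same set, so the two item-1 arguments are close cousins resting on the same geometric fact.

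The genuine gap is exactly where you place it: $\sigma(T_\ell(\varepsilon))=O(\varepsilon)$ is not established, and the route you sketch does not close it. Sard's theorem for $\pmb{\alpha}\mapsto\big(s(\pmb{\alpha}),|\partial_{\alpha_\ell}f(\pmb{\alpha})|\big)$ only makes the critical values null in $\mathbb{R}^2$; to say anything about the fixed threshold $a=\kappa_{\rm f}/d$ you must (as you note) move $a$ to a generic nearby value, and even then you obtain, for a.e.\ $y$, only \emph{some} positive lower bound on $|\nabla\tilde g|$ near the level set --- a bound that may degenerate as $y$ approaches the exceptional set. Compactness of $\Omega$ and $|\nabla f|\ge\kappa_{\rm f}$ give a uniform upper bound on $\sigma(s^{-1}(y))$ but no uniform transversality constant, so you are left with $I(y)\le C(y)h^m$ with no control on $\int C(y)^q\,dy$, and the target bound \eqref{eq:bd_pdf_multid} does not follow (a null set of bad $y$ is harmless, but a set of $y$ where $C(y)$ is merely large is not). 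To be fair, the paper's proof of item 1 leans on a comparable unproven regularity assertion about $\partial\tilde{A}_j\cap\partial\tilde{A}_k$, with a $y$-dependent constant it does not track either; but it needs that assertion only once, whereas your argument needs it for both items and, as written, needs the constant to be uniform (or at least $L^q$) in $y$ --- a strictly stronger claim that remains unverified. If you want to stay within your framework, the cleanest repair is to keep your item-1 reduction but replace your item-2 argument by the paper's counting argument, which eliminates one of the two uses of the slab estimate; the remaining use still requires you to prove, not assume, the transversality with constants controlled in $y$.
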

\begin{proof}
\begin{enumerate}
\item Fix the indices $j\neq k$ and denote for brevity $D_{jk} = \phi _j (\tilde{A} _j) \cap \phi _k (\tilde{A}_k)$. Let $\pmb{\beta} \in D_{jk}$. By injectivity of $\phi _j$ and $\phi _k$ (see Lemma \ref{lem:phij2}), There exist unique points $\pmb{\alpha} ^{(j)} \in \tilde{A} _j$ and $\pmb{\alpha} ^{(k)} \in \tilde{A}_k$ such that $\phi _j(\pmb{\alpha} ^{(j)}) = \phi _k(\pmb{\alpha} ^{(k)}) = \pmb{\beta}$. By definition \eqref{eq:phi_def2}, $$\pmb{\beta}-\pmb{\alpha} ^{(j)} = \delta(\pmb{\alpha} ^{(j)})\hat{e}_j \, , \qquad \pmb{\beta}-\pmb{\alpha} ^{(k)} = \delta(\pmb{\alpha} ^{(k)})\hat{e}_k \, .$$ Since $\hat{e}_j \perp \hat{e}_k$ and since by \eqref{eq:phidelta_bd} $\delta (\pmb{\alpha}^{j}),\delta (\pmb{\alpha}^{j})= O(h^{m+1})$, then\footnote{Geometrically, the points $\pmb{\alpha}^{(j)}$,$\pmb{\alpha}^{(k)}$ and $\pmb{\beta}$ are the vertices of a right-angle triangle, where both legs are $O(h^{m+1})$. Hence, by the Pythagorean Theorem, the length of the hypotenuse is also $O(h^{m+1})$.} $$|\pmb{\alpha} ^{(j)} - \pmb{\alpha} ^{(k)}| = O(h^{m+1}) \, .$$
Next, denote the {\em geodesic distance} on $s^{-1}$ by $|\cdot |_s$. Since $s\in C^1$, then $|\nabla s|$ is bounded from above on $\Omega$ and so $|\pmb{\alpha} ^{(j)}-\pmb{\alpha} ^{(k)}|_s =O(h^{m+1})$ as well. But since the interiors of $\tilde{A} _j$ and $ \tilde{A}_k$ are disjoint, then the geodesic path between $\pmb{\alpha }^{(j)}$ and $\pmb{\alpha} ^{(k)}$ must pass through a point $\pmb{\alpha} ^{\star} \in \partial \tilde{A} _j \cap \partial \tilde{A}_k$. Hence, 
\begin{equation}\label{eq:border_dist}
|\pmb{\alpha}^{\star} - \pmb{\alpha } ^{(j)} |_s = O(h^{m+1}) \, ,
\end{equation}

Since \eqref{eq:border_dist} holds for any $\pmb{\beta} \in D_{jk}$ and $\pmb{\alpha} ^{(j)} = \phi _j^{-1} (\pmb{\beta})$, then
$$\phi _j ^{-1} (D_{jk}) \subseteq E_{jk}(h) :\, = \left\{\pmb{\alpha} \in s^{-1}(y) ~~ |~ ~ \inf_{\pmb{\alpha}^{\star} \in \partial \tilde{A} _j \cap \partial \tilde{A}_k}|\pmb{\alpha}-\pmb{\alpha} ^{\star}|_s \leq Kh^{m+1} \right\} \, ,$$ for some $K>0$. It is therefore sufficient to show that $\sigma (E_{jk} (h)) = O(h^m)$ for $0< h\ll 0$.

By construction, $\partial \tilde{A} _j \cap \partial \tilde{A}_k\subseteq \cup _{j=1}^d \partial A_j$. Since $f\in C^1$, then $\sigma (\cup _{j=1}^d \partial A_j) = 0$ and so by monotonicity of measure $ \sigma(\partial \tilde{A} _j \cap \partial \tilde{A}_k)  =0$ as well.\footnote{For each $1\leq j \leq d$, the set $\partial A_j$ is the boundary of the smooth manifold $A_j$, and so it is of measure zero.} Furthermore $\partial \tilde{A} _j \cap \partial \tilde{A}_k$, is a finite union of smooth subsurface of $s^{-1}(y)$, each of finite ($d-2$)-dimensional surface measure.\footnote{For example, if $d=3$, than $\partial \tilde{A} _j \cap \partial \tilde{A}_k$ is a finite set of curves, each with a finite length.}  Finally, since $\partial \tilde{A} _j \cap \partial \tilde{A}_k$ is compact in the topology of the smooth $(d-1)$-dimensional manifold $s^{-1} (y)$ (it is bounded and close), and since $E_{jk} (h)$ is of {\em geodesic} radius $Kh^{m+1}$ from $\partial \tilde{A} _j \cap \partial \tilde{A}_k$, then $\sigma (E_{jk}) = O\big((h^{(m+1)})^{(d-1)}\big)\leq O(h^m)$. Hence,
\begin{equation}\label{eq:measure_cap_preimage}
\sigma \big( \phi _j ^{-1} \left( D_{jk} \right)  \big) \leq \sigma\big( E_{jk} (h)\big)=  O(h^m) \, .
\end{equation}

In addition, since $\phi _j$ is injective, $\phi _j (\phi _j ^{-1} (D_{jk})) = D_{jk}$. Hence, by taking $E = \phi _j ^{-1} (D_{jk})$ in \eqref{eq:measure_preserve} yields $$|\sigma (\phi _j ^{-1} (D_{jk}) ) - \sigma  ( D_{jk}) | = |\sigma (E ) - \sigma  (\phi _j (E)) | \leq O(h^m) \, .$$
Combined with \eqref{eq:measure_cap_preimage} this proves that $\sigma  ( D_{jk})  = O(h^m)$, as required.

\item Since $\cup _{j=1}^d \phi _j (\tilde{A} _j) \subseteq f^{-1}(y)$, then
\begin{subequations}\label{eq:almost_surj_oneside}
\begin{equation}\label{eq:f_phi_mono}
\sigma \left(\cup _{j=1}^d \phi _j (\tilde{A} _j)\right) \leq \sigma \left(f^{-1}(y)\right) \, .
\end{equation}
On the other hand, by item \eqref{eq:phij_almost_disjoint} and by the inclusion-exclusion argument
\begin{align*}
\sigma \left( \cup_{j=1}^d \phi _j (\tilde{A} _j)\right) &= \\
\sum\limits_{j=1}^d \sigma \left( \phi _j (\tilde{A} _j ) \right) - &\sum\limits_{j_1,j_2} \sigma \left( \phi _{j_1} (\tilde{A}_{j_1} ) \cap \phi _{j_2} (\tilde{A}_{j_2} ) \right) + \cdots + (-1)^{d+1} \sigma \left( \phi _{1} (\tilde{A}_{1} ) \cap \cdots \cap \phi _{d} (\tilde{A}_{d} ) \right) = \\
&\qquad \qquad \qquad \qquad \sum\limits_{j=1}^d \sigma \left( \phi _j (\tilde{A} _j ) \right)+ O(h^m) = \sum\limits_{j=1}^d \sigma \big( \tilde{A} _j  \big)+ O(h^m)\, .
\end{align*}
where the last equality is due to \eqref{eq:measure_preserve}. Hence,
\begin{equation}\label{eq:ineq_phijimag_s}
\sigma \left( \cup_{j=1}^d \phi _j (\tilde{A} _j)\right) = \sum\limits_{j=1}^d \sigma \big( \tilde{A} _j  \big)+ O(h^m) = \sigma (\cup _{j=1} ^d \tilde{A} _j ) + O(h^m) = \sigma (s^{-1}(y)) + O(h^m)\, ,
\end{equation}
\end{subequations}
where the second equality follows from the fact that the sets $\big\{ \tilde{A} _j \big\}_{j=1}^d$ are disjoint, and the third equality follows from $\cup _{j=1} ^d \tilde{A} _j = s^{-1} (y)$.

Since the left-hand-sides of \eqref{eq:f_phi_mono} and \eqref{eq:ineq_phijimag_s} are identical, it follows that
\begin{subequations}\label{eq:almost_surj_ineq}
\begin{equation}
\sigma \left( s^{-1} (y) \right) + O(h^m) \leq \sigma \left(f^{-1} (y)\right) \, .
\end{equation}
Crucially, since by Corollary \ref{corr:phij_reverse}, both Lemma \ref{lem:phij2} and item 1 of this lemma remain valid if we interchange $f$ and $s$, we also have that 
\begin{equation}
\sigma  \left( f^{-1} (y) \right) + O(h^m) \leq \sigma \left(s^{-1} (y)\right) \, .
\end{equation}
\end{subequations}
Combining the two inequalities of \eqref{eq:almost_surj_ineq} yields that
\begin{equation}\label{eq:fs_similar_sigma}
|\sigma (f^{-1} (y)) - \sigma (s^{-1}(y))| = O(h^m) \, .
\end{equation}
Finally
\begin{align*}
 \sigma \left( f^{-1} (y) \setminus \cup _{j=1}^d \phi _j (\tilde{A} _j) \right) &= \sigma\left( f^{-1} (y) \right)- \sigma\left( \cup _{j=1}^d \phi _j (\tilde{A} _j) \right) \leq  \\
&\left| \sigma\left( f^{-1} (y) \right)-\sigma\left( s^{-1} (y) \right)\right| + O(h^m) = O(h^m) \, ,
\end{align*}
where the inequality in the first line is due to \eqref{eq:ineq_phijimag_s}, and the last equality is due to \eqref{eq:fs_similar_sigma}.
\end{enumerate}
\end{proof}

\subsection*{Step 4}
By \eqref{eq:phij_almost_surj}, and since $\frac{1}{|\nabla f|} \leq \frac{1}{\kappa _{\rm f}}$, then $$\int_{f^{-1} (y)}\frac{1}{|\nabla f|} \, d\sigma= \int_{\cup_{j=1}^{d} \phi _j (\tilde{A} _j)}\frac{1}{|\nabla f|} \, d\sigma+ O(h^{m}) \, .$$
Hence, by an inclusion-exclusion argument,
\begin{equation}
\begin{aligned}
\int_{f^{-1} (y)} \frac{1}{|\nabla f|} \,  d\sigma &= O(h^{m})\, + 
\sum\limits_{j=1}^d  \int_{\phi_j(\tilde{A} _j) } \frac{1}{|\nabla f|} \,  d\sigma \, \\
&- \sum\limits_{\substack{j_1 <j_2 \\ j_1 =1}}^d   \int_{\phi _{j_1}( \tilde{A}_{j_1}) \cap \phi _{j_2} (\tilde{A}_{j_2})} \frac{1}{|\nabla f|} \,  d\sigma+ \cdots + (-1)^{d-1}  \int_{\phi _1 (\tilde{A}_{1})\cap \cdots \cap \phi _d (\tilde{A}_d)} \frac{1}{|\nabla f|} \,  d\sigma\, .
\end{aligned}
\end{equation}
But, by \eqref{eq:phij_almost_disjoint}, we can reduce all of the higher-order terms to yield
\begin{equation}\label{eq:Iy_disjoint_phijBj}
\int_{f^{-1}(y)} \frac{1}{|\nabla s |} \, d\sigma =\sum\limits_{j=1}^d  \int_{\phi _j (\tilde{A} _j)} \frac{1}{|\nabla f |} \, d\sigma + O(h^{m})\, .
\end{equation}
Hence, substituting \eqref{eq:Iy_disjoint_Bj} and \eqref{eq:Iy_disjoint_phijBj} into \eqref{eq:Iy_def2} yields
\begin{equation}\label{eq:Iy_breakdown}
I(y) \leq  \sum\limits_{j=1}^{d} \left|\int_{\phi _j (\tilde{A} _j)} \frac{1}{|\nabla f |} \, d\sigma - \int_{ \tilde{A} _j} \frac{1}{|\nabla s |} \, d\sigma  \right| + O(h^{m}) \, .
\end{equation}

\subsection*{Step 5}
By \eqref{eq:Iy_breakdown}, in order to show that $I(y) = O(h^m)$, it is sufficient to prove that 
\begin{equation}\label{eq:Ijy_def}
I_j(y) :\,= \left|\int_{\phi _j (\tilde{A} _j)} \frac{1}{|\nabla f |} \, d\sigma - \int_{ \tilde{A} _j} \frac{1}{|\nabla s |} \, d\sigma  \right| = O(h^m) \, ,\qquad 1\leq j \leq d \, .
\end{equation}

This proof is similar to that of item 4 in Lemma \ref{lem:phij2}. For ease of notations, we assume without loss of generality that $j=d$. In this case, $\partial _{\alpha _d} s \neq 0$ on $\tilde{A} _j$,\footnote{As before, this follows for sufficiently small $h$ from the fact that $|\partial _{\alpha _d}f|\geq \frac{\kappa _{\rm f}}{d}$ on $A_d (y)$, and from Theorem~\ref{thm:tensor_spline}.} and so by the implicit function theorem there exists a function~$S$,~such that if $s(\alpha_1, \ldots, \alpha_d)=y$, then $\alpha _d = S(\alpha_1 , \ldots , \alpha _{d-1})$. The domain of $S$ is $$G_{\tilde{A} _d} :\,=  \big\{ (\alpha_1 , \ldots ,\alpha_{d-1} )  ~ | ~ \exists \alpha _d \in [0,1] ~~{\rm s.t.}~~ (\alpha_1, \ldots ,\alpha_d) \in \tilde{A} _d \big\}  \, .$$
In particular, if $(\alpha _1, \ldots ,\alpha_{d-1})\in G_{\tilde{A} _d}$, then $s(\alpha _1, \ldots , \alpha _{d-1}, S(\alpha _1, \ldots ,\alpha _{d-1})) =~y$. Therefore $$\int_{\tilde{A} _d} \frac{1}{|\nabla s|}  \, d\sigma  = \int_{G_{\tilde{A} _d}} \frac{1}{\nabla s\left( \alpha_1, \ldots , \alpha_{d-1} , S(\alpha _1, \ldots , \alpha_{d-1} ) \right)\big|} \sqrt{1+|\nabla S|^2} \, d\alpha _1 \cdots d\alpha _{d-1} \, . $$
Furthermore, by the implicit function theorem, $\partial _{\alpha _j} S = -\frac{\partial _{\alpha _j}s}{\partial _{\alpha _d}s}$ for $1\leq j <d$, and so $$\sqrt{1 + |\nabla S|^2} = \sqrt{1 + \sum _{j=1}^{d-1} \left(\frac{\partial _{\alpha _j}s}{\partial _{\alpha _d}s}\right)^2}  = \frac{1}{\left|\partial _{\alpha _d}s \right|}\sqrt{\left(\partial _{\alpha _d}s\right)^2 + \sum _{j=1}^{d-1} \left(\partial _{\alpha _j}s\right)^2} = \frac{1}{\left|\partial _{\alpha _d}s \right|}|\nabla s| \, .$$ Hence, \begin{subequations}\label{eq:nabla_to_partial}
\begin{equation}
\int_{\tilde{A} _d}\frac{1}{|\nabla s|} \, d\sigma = \int_{G_{\tilde{A} _d}} \frac{1}{\left|\partial _{\alpha _d}s \right|} \,  d\alpha _1 \cdots d\alpha _{d-1} \, .
\end{equation}
Similarly, since $|\partial _{\alpha _d} f|\geq \frac{\kappa _{\rm f}}{2d} > 0$ on $\phi _j (\tilde{A} _j)$, applying the implicit function theorem to $f$ yields a function $F:G_{\phi _d (\tilde{A} _d)} \to \mathbb{R}$ where $G_{\phi _d (\tilde{A} _d)} \subset \mathbb{R}^{d-1}$, such that $f(\alpha _1 , \ldots , \alpha _{d-1}, F(\alpha _1 , \ldots , \alpha _{d-1}))=~y$, and
\begin{equation}
\int_{\phi _d (\tilde{A} _d)}\frac{1}{|\nabla f|} \, d\sigma= \int_{G_{\phi _d (\tilde{A} _d)}} \frac{1}{\left|\partial _{\alpha _d}f\right|} \,  d\alpha' _1 \cdots d\alpha' _{d-1} \, .
\end{equation}
\end{subequations}

Next, by item 2 of Lemma \ref{lem:phij2}, then $\phi _d$ induces a surjective map $\varphi_d: G_{\tilde{A} _d} \to G_{\phi _d (\tilde{A} _d)}$. But, because $\phi _d$ only alters the $\alpha _d$ coordinate, $\varphi_d =  {\rm Id}$, and so $ G_{\tilde{A} _j} = G_{\phi _d (\tilde{A} _d)}$. Substituting this equality and \eqref{eq:nabla_to_partial} into \eqref{eq:Ijy_def} yields
\begin{equation}\label{eq:Iy_premitive_bd}
I_d (y) \leq \left| \int_{G_{\tilde{A} _d}}   \left( \frac{1}{\left|\partial _{\alpha _d}f\right|} - \frac{1}{\left|\partial _{\alpha _d}s\right|}  \right) \,  d\alpha' _1 \cdots d\alpha' _{d-1}\right| \leq \int_{G_{\tilde{A} _d}}  \frac{\left| \partial _{\alpha _d}f - \partial _{\alpha _d}s \right| }{| \partial _{\alpha _d}f|\cdot | \partial _{\alpha _d}s|}  \,  d\alpha' _1 \cdots d\alpha' _{d-1} \, . 
\end{equation}

Bounding \eqref{eq:Iy_premitive_bd} is similar to its one-dimensional counterpart in Appendix \ref{ap:cub_pdf_pf}. Since $|\partial _{\alpha _d} f|>\frac{\kappa _{\rm f}}{d}$, and since by Theorem \ref{thm:tensor_spline}, $\big| \partial _{\alpha _d}s - \partial _{\alpha _d}f \big| \leq C_m h^m$, then for a sufficiently small $h$, $\big| \partial _{\alpha _d}s\big| > \frac{\kappa _{\rm f}}{2d}$ on $\phi _d (\tilde{A} _d)$. Substituting these bounds in~\eqref{eq:Iy_premitive_bd}~yields
\begin{align*}
I_{d}(y) \leq \frac{2d^2}{\kappa _{\rm f}^2}  \int_{G_{\tilde{A} _d}} &|\partial _{\alpha _d} s(\alpha _1, \ldots , \alpha _{d-1}, S(\alpha _1, \ldots, \alpha _{d-1})) \\
&- \partial _{\alpha _d} f(\alpha _1, \ldots , \alpha _{d-1}, F(\alpha _1, \ldots, \alpha _{d-1})) |    d\alpha _1 \cdots d\alpha _{d-1}   \, .
\end{align*}
Next, if we denote $\pmb{\beta} = (\alpha_1, \ldots, \alpha _{d-1}, S(\alpha _1, \ldots ,\alpha _{d-1}))$, then by \eqref{eq:phi_def2}
$$ \phi _d (\pmb{\beta}) = (\alpha_1, \ldots, \alpha _{d-1}, F(\alpha _1, \ldots ,\alpha _{d-1})) \, .$$
Therefore, we can rewrite and bound the left-hand-side integrand by
\begin{equation}\label{eq:integrand_multid_fs_bd}
|\partial _{\alpha _d} s(  \pmb{\beta}) - \partial _{\alpha _d} f(\phi_d (\pmb{\beta}))|\leq |\partial _{\alpha _d} s(\pmb{\beta}) - \partial _{\alpha _d} f(\pmb{\beta})| + |\partial _{\alpha _d} f(\pmb{\beta}) - \partial _{\alpha _d} f(\phi_d (\pmb{\beta}))| \, .
\end{equation}
This bound is very similar to its one-dimensional counterpart in \eqref{eq:l1_bd_middle}. The first term on the right-hand-side of \eqref{eq:integrand_multid_fs_bd} is $O(h^m)$, see Theorem \ref{thm:tensor_spline}. In addition, since $f\in C^2$, the second term in the right-hand-side of \eqref{eq:integrand_multid_fs_bd} reads $$|\partial _{\alpha _d} f(\pmb{\beta}) - \partial _{\alpha _d} f(\phi_d (\pmb{\beta}))| \leq M_2 |\pmb{\beta} -\phi_ d (\pmb{\beta})|  = M_2 |\delta (\pmb{\beta})| = O(h^{m+1}) \, ,$$ 
where, as before, $M_2 = \max _{\Omega} |\partial ^2 _{\alpha _d} f|$ and the last equality is due to \eqref{eq:yalpha_hm}. Applying these bounds to \eqref{eq:integrand_multid_fs_bd} yields
\begin{equation}\label{eq:Iy_bd_fin}
I_{d}(y) \leq \frac{2d^2}{\kappa _{\rm f}^2} \tilde{K}h^m \int_{G_{\tilde{A} _d}}   \,  d\alpha _1 \cdots d\alpha _{d-1}  = K h^m \, ,
\end{equation} 
for some constants $\tilde{K}, K>0$. Moreover, since \eqref{eq:Iy_bd_fin} holds for $I_j (y)$ for all indices $1\leq j \leq d$, then by \eqref{eq:Iy_breakdown} $$I(y) \leq \sum\limits_{j=1}^d I_j (y)+ O(h^m) \leq d Kh^m +O(h^m) \, .$$

\subsection*{Step 6}
{\revdone Although $\| p_f -p_s \|_1 = \int _{-\infty} ^{\infty }I(y) \, dy$, since $\Omega$ is compact and $s$ and $f$ are continuous, $$Q_1 \leq s,(\pmb{\alpha}), f(\pmb{\alpha}) \leq Q_2 \, .$$ and so $I(y) =0$ for $y \not\in [Q_1,Q_2]$. Hence, by \eqref{eq:Iy_bd_fin}
$$\|p_f - p_s\|_q  = \large(\int\limits _{-\infty} ^{\infty} I^q(y) \, dy \large)^{\frac{1}{q}}=  \large( \int\limits_{Q_1}^{Q_2} I^q(y) \, dy \large)^{\frac{1}{q}} \leq \large( K^qh^{qm} (Q_2 - Q_1) \large)^{\frac{1}{q}} \leq K (Q_2 -Q_1)^{\frac{1}{q}} h^m \, .$$ }

\bibliographystyle{siamplain}
\newpage

\end{document}